\tikzset{
  symbol/.style={
    draw=none,
    every to/.append style={
      edge node={node [sloped, allow upside down, auto=false]{$#1$}}}
  }
}
\mathchardef\mhyphen="2D 
\newtheorem{thm}{Theorem}
\newtheorem*{thm*}{Theorem}
\newtheorem{proposition}[thm]{Proposition}
\newtheorem{corollary}[thm]{Corollary}
\newtheorem{thm&defn}[thm]{Theorem \& Definition}
\newtheorem{exmp}[thm]{Example}
\newtheorem{theorem}[thm]{Theorem}
\newtheorem{lemma}[thm]{Lemma}
\newtheorem{lm}[thm]{Lemma}
\newtheorem{fact}[thm]{Fact}
\theoremstyle{definition}
\newtheorem{definition}[thm]{Definition}
\newtheorem*{notation*}{Notation}
\newtheorem{example}[thm]{Example}
\newtheorem{remark}[thm]{Remark}
\newtheorem{rmk}[thm]{Remark}
\theoremstyle{remark}
\newtheorem*{rem*}{Remark}
\numberwithin{equation}{section}
\numberwithin{thm}{section}
\newcommand{\N}{\mathbb{N}}
\newcommand{\Z}{\mathbb{Z}}
\newcommand{\Q}{\mathbb{Q}}
\newcommand{\R}{\mathbb{R}}
\newcommand{\C}{\mathbb{C}}
\newcommand{\lb}{\text{\tiny $\bullet$}}
\newcommand{\eZ}{\mathbb{Z}_{\geq 0}}
\renewcommand{\P}{\mathbb{P}}
\renewcommand{\O}{\mathcal{O}}
\newcommand{\red}{\mathrm{red}}
\newcommand{\bZ}{\mathbb Z}
\newcommand{\bN}{\mathbb N}
\newcommand{\bQ}{\mathbb Q}
\newcommand{\bR}{\mathbb R}
\newcommand{\bC}{\mathbb C}
\newcommand{\bfV}{\mathbf{V}}
\newcommand{\cR}{\mathcal{R}}
\newcommand{\fp}{\mathfrak{p}}
\newcommand{\sA}{\mathscr{A}}
\newcommand{\sB}{\mathscr{B}}
\newcommand{\sI}{\mathscr{I}}
\newcommand{\sfS}{\mathsf{S}}
\newcommand{\sfSp}{\mathsf{S}_{\mathrm{split}}}
\newcommand{\Graphc}{\overline{\Gamma}_f}
\newcommand{\tSigma}{\widetilde{\Sigma}}
\DeclareMathOperator{\inte}{int}
\DeclareMathOperator{\Tor}{Tor}
\DeclareMathOperator{\Hom}{Hom}
\DeclareMathOperator{\cNE}{\overline{NE}}
\DeclareMathOperator{\Exc}{Exc}
\DeclareMathOperator{\Spec}{Spec}
\DeclareMathOperator{\Cone}{Cone}
\DeclareMathOperator{\rk}{rank}
\DeclareMathOperator{\codim}{codim}
\DeclarePairedDelimiter{\gen}{\langle}{\rangle}
\newcommand{\quotient}[2]{{\raisebox{.2em}{$#1$}\!\!\left/\!\!\raisebox{-.2em}{$#2$}\right.}}
\newcommand{\inclusion}{\hookrightarrow}
\ProvideDocumentCommand{\xrightleftarrows}{ O{}m }{%
            \mathrel{%
            \vcenter{\hbox{%
            \begin{tikzpicture}
              \node[minimum width=0.2cm,minimum height=1ex,anchor=south,align=center] (a){\vphantom{hg}${\scriptstyle #2}$\\ \vphantom{hg}${\scriptstyle #1}$};
              \draw[->] ([yshift=0.35ex]a.west) -- ([yshift=0.35ex]a.east);
              \draw[<-] ([yshift=-0.35ex]a.west) -- ([yshift=-0.35ex]a.east);
            \end{tikzpicture}}}%
            }%
            }
\newcommand{\ignore}[1]{}
\title{Fiber products under toric flops and flips}
\author{Tsung-Chen Chen}
\address[]{Department of Mathematics, National Taiwan University, Taipei 10617, Taiwan}
\email{d13221003@ntu.edu.tw}
\author{Hui-Wen Lin}
\address[]{Department of Mathematics and Taida Institute for Mathematical Sciences (TIMS), National Taiwan University, Taipei 10617, Taiwan}
\email{linhw@math.ntu.edu.tw}
\author{Sz-Sheng Wang}
\address[]{Department of Applied Mathematics, National Yang Ming Chiao Tung University, Hsinchu 30010, Taiwan}
\email{sswangtw@math.nctu.edu.tw}
\begin{document}

\begin{abstract} 
Let $\Sigma$ and $\Sigma'$ be two refinements of a fan $\Sigma_0$ and $f \colon X_{\Sigma} \dashrightarrow X_{\Sigma'}$ be the birational map induced by $X_{\Sigma} \rightarrow X_{\Sigma_0} \leftarrow X_{\Sigma'}$. We show that the graph closure $\Graphc$ is a \emph{not necessarily normal} toric variety and we give a combinatorial criterion for its normality. 

In contrast to it, for $f$ being a toric flop/flip, we show that the scheme-theoretic fiber product $X \coloneqq X_{\Sigma}\times_{X_{\Sigma_0}}X_{\Sigma'}$ is in general \emph{not toric}, though it is still \emph{irreducible} and $X_{\rm red} = \overline{\Gamma}_f$. 

A complete numerical criterion to ensure $X = X_{\rm red}$ is given for 3-folds, which is fulfilled when $X_\Sigma$ has at most terminal singularities. In this case, we further conclude that $X$ is normal. 

The result turns out to be a key step to study the correspondence between Chow motives under toric flops and flips.
\end{abstract}

\maketitle

\section{Introduction}

For two schemes $Y$ and $Y'$ over a scheme $S$, the fiber product $Y\times_SY'$ exists and is unique up to isomorphisms.
It is natural to ask ``Is the scheme-theoretic fiber product of two toric varieties still toric?" Unfortunately, the answer is ``NO". For example, let $\sigma = \Cone( e_1, e_2)$ and $\sigma' = \Cone (e_1, e_1+e_2)$ which associate two affine toric varieties $U_\sigma = \operatorname{Spec} \mathbb{C}[x^{e_1^\vee}, x^{e_2^\vee}]$ and $U_{\sigma'} = \operatorname{Spec} \mathbb{C}[x^{e_1^\vee}, x^{e_2^\vee}, x^{e_1^\vee-e_2^\vee}]$. It is easy to see that
\[U_{\sigma'}\times_{U_{\sigma}}U_{\sigma'}
=\Spec \C[u, v, w, w']/\langle u-vw, v(w-w')\rangle\]
and $\langle u-vw, v(w-w')\rangle$ is not a prime ideal, so $U_{\sigma'}\times_{U_{\sigma}}U_{\sigma'}$ is not a toric variety. 

In birational geometry, the minimal model program plays an important role, in which the key ingredients consist of flops and flips, so it is a good choice to study toric fiber products under flops and flips. Actually the study of toric fiber products is related to the study of graph closures.

In this paper, we study the fundamental problem : {\it what is the difference between the graph closure and the fiber product under toric flops and flips.} \footnote{For our terminology of toric flops and flips, see Remark \ref{rmk_toricflip}.} Notice that we use the definition of toric varieties by constructing them from a finite subset $\sA$ of the lattice $M$ of characters of the torus (see Section \ref{afftoric_subsec}) and thus the toric varieties may be {\it not normal}. But if we regard $\sA$ as a generating set in $M$ and set $\sigma = \Cone (\sA)^\vee \subseteq N_\bR$ where $N$ is the dual lattice of $M$,  by \cite{CLS}, 
$\sigma$ is a strongly convex rational polyhedral cone and 
then $\Spec (\bC [\sfS_\sigma])$ is the normalization of our original toric variety constructed from $\sA$.

Since a subscheme of a toric variety may not be a toric variety, the first thing we have to do is to study the toric structure of graph closures and fiber products under toric flops and flips. For the toric structure of graph closures, we can consider a more general birational map from two refinements of a fan. Indeed, given a fan $\Sigma_0$ in $N_\bR$, a fan $\Sigma$ refining $\Sigma_0$ will yield a toric morphism $X_{\Sigma} \to X_{\Sigma_0}$.
In Theorem\autoref{gfbir_prop}, let $\Sigma$ and $\Sigma'$ be two refinements of a fan $\Sigma_0$ and let $f \colon X_{\Sigma} \dashrightarrow X_{\Sigma'}$ be the birational map induced by $X_{\Sigma} \rightarrow X_{\Sigma_0} \leftarrow X_{\Sigma'}$. Then 
{\it the graph closure $\Graphc$ is a toric variety}. Moreover, if we define the coarsest common refinement of $\Sigma$ and $\Sigma'$ over $\Sigma_0$ by
\begin{equation*}
    \tSigma \coloneqq \{ \sigma \cap \sigma' \mid \sigma\in\Sigma,\sigma'\in\Sigma' \mbox{ such that } \sigma, \sigma' \subseteq \sigma_0 \mbox{ for some } \sigma_0 \in \Sigma_0 \}
\end{equation*}
in $N_\bR$, then {\it the normalization of $\Graphc$ is the toric variety $X_{\tSigma}$}.



The following is a combinatorial criterion for the normality of graph closures. 

\begin{theorem}{\rm(= Theorem\autoref{gfcri_thm})}
The following statements are equivalent:
\begin{enumerate}[(i)]
    \item
    $\overline{\Gamma}_f$ is normal.

    \item
    $\overline{\Gamma}_f$ is the toric variety  associated with the fan $\tSigma$.
    
    \item
    For all (maximal cone) $\sigma$ and $\sigma'$ contained in some cone of $\Sigma_0$,
    \begin{equation*}
        \sfS_{\sigma} + \sfS_{\sigma'} = \sfS_{\sigma \cap \sigma'}.
    \end{equation*}
\end{enumerate}
\end{theorem}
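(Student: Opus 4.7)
The plan is to leverage Theorem~\ref{gfbir_prop}, which already identifies the normalization of $\Graphc$ with $X_{\tSigma}$, and to reduce the global assertions to a local semigroup identity on affine charts.

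For (i)~$\Leftrightarrow$~(ii), I will proceed as follows. The fan $\tSigma$ consists of strongly convex rational polyhedral cones, so $X_{\tSigma}$ is normal and (ii)~$\Rightarrow$~(i) is immediate. Conversely, Theorem~\ref{gfbir_prop} provides a finite birational normalization morphism $\nu\colon X_{\tSigma}\to\Graphc$. When $\Graphc$ is normal, the universal property of normalization forces $\nu$ to be an isomorphism, yielding (ii).

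The heart of the argument is (ii)~$\Leftrightarrow$~(iii), which I will reduce to an affine-local computation. For cones $\sigma\in\Sigma$ and $\sigma'\in\Sigma'$ lying in a common $\sigma_0\in\Sigma_0$, the affine piece of $\Graphc$ inside $U_\sigma\times U_{\sigma'}=\Spec(\C[\sfS_\sigma]\otimes_\C\C[\sfS_{\sigma'}])$ is the scheme-theoretic closure of the diagonally embedded torus $T_N$, whose coordinate ring is the image of the multiplication map
\[
\C[\sfS_\sigma]\otimes_{\C}\C[\sfS_{\sigma'}]\longrightarrow\C[M],\qquad x^m\otimes x^{m'}\longmapsto x^{m+m'},
\]
namely $\C[\sfS_\sigma+\sfS_{\sigma'}]$. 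The matching chart of $X_{\tSigma}$ is $U_{\sigma\cap\sigma'}=\Spec\C[\sfS_{\sigma\cap\sigma'}]$, and the local incarnation of $\nu$ is induced by the tautological inclusion $\sfS_\sigma+\sfS_{\sigma'}\subseteq\sfS_{\sigma\cap\sigma'}$ (automatic from $\sigma\cap\sigma'\subseteq\sigma,\sigma'$). Thus $\nu$ is an isomorphism over $U_\sigma\times U_{\sigma'}$ exactly when this inclusion is an equality, which is the content of (iii).

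The main point requiring care is the reduction to pairs of \emph{maximal} cones indicated in the formulation of (iii). I will check that the charts $U_\sigma\times U_{\sigma'}$ indexed by maximal pairs already cover the ambient product, so that verifying $\nu$ is an isomorphism on these charts suffices for the global statement; and I will note that the semigroup identity for a maximal pair $(\sigma,\sigma')$ descends to any face pair $(\tau,\tau')$ by localizing at a suitable character $x^m$, so no separate check is needed for non-maximal cones. Modulo this standard toric bookkeeping, the proof reduces to the semigroup computation above combined with Theorem~\ref{gfbir_prop}.
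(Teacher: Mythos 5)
Your proposal is correct and follows essentially the same route as the paper: (i)$\Leftrightarrow$(ii) via Theorem~\ref{gfbir_prop}, and (ii)$\Leftrightarrow$(iii) by reducing to the local multiplication map $\C[\sfS_\sigma]\otimes_\C\C[\sfS_{\sigma'}]\to\C[\sfS_{\sigma\cap\sigma'}]$, whose image $\C[\sfS_\sigma+\sfS_{\sigma'}]$ is the chart of $\Graphc$ and whose surjectivity is exactly the semigroup identity. The paper packages this as the statement that $X_{\tSigma}\to X_\Sigma\times X_{\Sigma'}$ is a closed immersion, but the underlying computation is identical to yours.
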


Using the combinatorial criterion, in Corollary\autoref{gftoric_cor}, we prove that if the toric variety $X_{\Sigma}$ is smooth, then $\Graphc$ is normal  for a toric flop or flip $f$. Moreover, if $n=3$ and $f \colon X_\Sigma \dashrightarrow X_{\Sigma'}$ is a toric flip or flop, then to achieve the normality of $\Graphc$, we only need the smoothness of one affine piece of $X_{\Sigma}$. 

The geometric picture of a toric flip is outlined as follows. For smooth toric 3-folds, Danilov in late 70's proved that one can move the fan $\Sigma$ to $\Sigma'$ by a sequence of elementary flops. In each step, it corresponds to the move 
\[\begin{tikzpicture}[scale=0.75]
    \coordinate (u1) at (-1,2);
    \coordinate (u2) at (-1,-2);
    \coordinate (u3) at (-3,0);
    \coordinate (u4) at (1,0);

    \draw (u1) -- (u3) -- (u2) -- (u4) -- cycle;
    \draw (u1) -- (u2);

    \node at (u1) [above] {$u_1$};
    \node at (u2) [below] {$u_2$};
    \node at (u3) [left] {$u_3$};
    \node at (u4) [right] {$u_4$};

    \coordinate (u1r) at (7,2);
    \coordinate (u2r) at (7,-2);
    \coordinate (u3r) at (5,0);
    \coordinate (u4r) at (9,0);

    \draw (u1r) -- (u3r) -- (u2r) -- (u4r) -- cycle;
    \draw (u3r) -- (u4r);

    \node at (u1r) [above] {$u_1$};
    \node at (u2r) [below] {$u_2$};
    \node at (u3r) [left] {$u_3$};
    \node at (u4r) [right] {$u_4$};

    \draw[double distance=2pt, -implies] (2.25,0) -- (3.75,0);
    \node at (3,0.3) {flop};
\end{tikzpicture}\]
where the primitive vectors $u_i$ lie in a plane and $u_1 + u_2 =
u_3 + u_4$. Geometrically this is the blowing-up of a $(-1,-1)$
rational curve in a 3-fold then followed by a blowing-down of the
exceptional divisor $\mathbb{P}^1\times \mathbb{P}^1$ in another
direction, that is, it is an easy ordinary flop. Reid in early 80's generalized the above elementary move to higher
dimensions: let $u_1,\ldots,u_{n+1}$ be primitive vectors in
$N=\mathbb{Z}^n$ such that $\sigma_n$ and $\sigma_{n+1}$ be two
top dimensional cones intersect along the face cone
$\tau= \Cone(u_1,\ldots,u_{n-1})$, where
$\sigma_j\coloneqq\Cone(u_1,\ldots,\hat{u_j},\ldots,u_{n+1})$. Let
the linear relation between $u_i$'s be
$$
b_1 u_1 + \cdots + b_n u_n + u_{n+1} = 0,
$$
which is called a wall relation. Here we set $b_{n+1}=1$ and we must have $b_n > 0$ since $u_n$ and
$u_{n+1}$ lie in opposite sides of $\tau$. Reordering
$u_1,\ldots,u_{n-1}$ we may assume that $b_i < 0$ for $1\le i\le
\alpha$, $b_i=0$ for $\alpha + 1\le i\le \beta$ and $b_i > 0$ for
$\beta+1\le i\le n+1$. Notice that $0\le \alpha\le \beta\le
n-1$.

The case one can perform ``elementary move'' is when $\alpha\ge
2$. Then there are two different decompositions of
$\sigma_0\coloneqq\Cone(u_1,\ldots,u_{n+1})$:
$$
\sigma_0 = \bigcup\nolimits_{\beta + 1\le j\le n+1}\sigma_j =
\bigcup\nolimits_{1\le j\le \alpha}\sigma_j.
$$
Our original two cones are in the first decomposition and the second decomposition will give us the local construction of a {\it toric flip}, whose global definition is given in Section \ref{ToricFlips}. When $u_1,\ldots,u_{n+1}$ lie in an affine hyperplane of $N_\Q$, it leads to a {\it toric flop}.

Next, for toric flops and flips, we study the toric structure of their fiber products. In Proposition\autoref{fpcri_prop}, 
we find that the fiber product $X \coloneqq X_{\Sigma}\times_{X_{\Sigma_0}}X_{\Sigma'}$ is a normal toric variety if and only if 
the scheme $X$ is (1) irreducible, (2) reduced and (3) the graph closure $\overline{\Gamma}_f$ is a normal toric variety. The condition (3) is studied in Section \ref{gf_sec} mentioned as above. For the condition (1), we show that $X$ is always irreducible under a toric flop or flip. Moreover, the reduced scheme $X_{\text{red}}$ with respect to $X$ is the toric variety $\Graphc$. In particular, if $X$ is a normal toric variety, then $X=\Graphc=X_{\tSigma}$.  
\begin{theorem}{\rm (=Theorem\autoref{fpred_thm})}
Let $f \colon X_{\Sigma} \dashrightarrow X_{\Sigma'}$ be a toric flip via toric morphisms $X_{\Sigma} \rightarrow X_{\Sigma_0} \leftarrow X_{\Sigma'}$ \eqref{tflip_eqn} and $X = X_{\Sigma} \times_{X_{\Sigma_0}} X_{\Sigma'}$ be the fiber product. Then we have:
\begin{enumerate}[(i)]
    \item
    The reduced scheme $X_{\red}$ associated to $X$ is a (not necessarily normal) toric variety.

    \item
    The normalization of $X_{\red}$ is the toric variety $X_{\tSigma}$. 
\end{enumerate}
\end{theorem}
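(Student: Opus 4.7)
The plan is to reduce the theorem to proving that $X$ is topologically irreducible with $|X|=|\Graphc|$; given this, both (i) and (ii) follow at once from Theorem\autoref{gfbir_prop}, which identifies $\Graphc$ as a (possibly non-normal) toric variety whose normalization is $X_{\tSigma}$. Since $X_\Sigma\to X_{\Sigma_0}$ and $X_{\Sigma'}\to X_{\Sigma_0}$ are both isomorphisms over the open torus $T\subseteq X_{\Sigma_0}$, the torus $T$ is open and dense in $X$, and its closure inside $X$ is precisely $\Graphc$. Hence $\Graphc\subseteq X_{\red}$ is a closed irreducible subscheme of dimension $n=\dim X_\Sigma$, and showing $X_{\red}=\Graphc$ amounts to proving topological irreducibility of $X$.

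To do so, I would work affine-locally using the local model of a toric flip. Over the affine chart $U_{\sigma_0}$ with $\sigma_0=\Cone(u_1,\dots,u_{n+1})$, the two simplicial decompositions $\{\sigma_j\}_{\beta+1\le j\le n+1}$ and $\{\sigma_j\}_{1\le j\le \alpha}$ cover $X_\Sigma|_{U_{\sigma_0}}$ and $X_{\Sigma'}|_{U_{\sigma_0}}$, and $X|_{U_{\sigma_0}}$ is covered by the affine pieces
\[
U_{\sigma_i}\times_{U_{\sigma_0}}U_{\sigma_j}=\Spec R_{ij},\qquad R_{ij}:=\C[\sfS_{\sigma_i}]\otimes_{\C[\sfS_{\sigma_0}]}\C[\sfS_{\sigma_j}].
\]
The inclusions $\sfS_{\sigma_i},\sfS_{\sigma_j}\hookrightarrow M$ induce a canonical surjection $\pi_{ij}\colon R_{ij}\twoheadrightarrow\C[\sfS_{\sigma_i}+\sfS_{\sigma_j}]$ whose target is an integral domain (it sits inside $\C[M]$), so topological irreducibility of $\Spec R_{ij}$ is equivalent to the nilpotency of $\ker\pi_{ij}$; the pieces then glue into a toric variety that coincides with $\Graphc$.

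The main obstacle is exactly this nilpotency claim, for without it one typically picks up extra components, as the example $U_{\sigma'}\times_{U_\sigma}U_{\sigma'}$ in the Introduction shows. The approach is to exploit the wall relation $b_1u_1+\cdots+b_nu_n+u_{n+1}=0$ of a toric flip: generators of $R_{ij}$ come from primitive generators of $\sfS_{\sigma_i}$ and $\sfS_{\sigma_j}$, and the wall relation (together with the assumption $\alpha\ge 2$, which ensures at least two strictly negative coefficients) encodes the single essential compatibility between them inside $M$. Writing out $R_{ij}$ explicitly with these generators, one analyzes any element of $\ker\pi_{ij}$ as a combination of differences $x^m\otimes 1-1\otimes x^m$ with $m\in\sfS_{\sigma_i}\cap\sfS_{\sigma_j}$ and shows, by induction on monomial degree dictated by the wall relation, that each such element lies in the nilradical. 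Once affine-local irreducibility is established and the pieces glue, we conclude $X_{\red}=\Graphc$, and then Theorem\autoref{gfbir_prop} yields both (i) and (ii).
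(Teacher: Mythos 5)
Your reduction is the same as the paper's: show $X$ is irreducible, so that $X_{\red}=\overline{T_N\times_{T_N}T_N}=\Graphc$, and then invoke Theorem\autoref{gfbir_prop} for both parts. Locally this is indeed the statement that the kernel of $R_{ij}\twoheadrightarrow \C[\sfS_{\sigma_i}+\sfS_{\sigma_j}]$ is nilpotent, i.e.\ $\sqrt{I_\sA+I_{\sA'}}=I_{\sB}$ with $\sB=\sA\cup\sA'$. Up to this point everything is fine and matches the paper.

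The gap is that the nilpotency claim --- which is the entire content of the theorem --- is only gestured at. Two concrete problems. First, your description of $\ker\pi_{ij}$ as ``a combination of differences $x^m\otimes 1-1\otimes x^m$ with $m\in\sfS_{\sigma_i}\cap\sfS_{\sigma_j}$'' is not correct: $I_{\sB}/(I_\sA+I_{\sA'})$ is generated by binomials $x^{\alpha_0+\alpha+\alpha'}-x^{\beta_0+\beta+\beta'}$ coming from arbitrary elements of $L_\sB$, and these genuinely mix generators of both semigroups (already in the conifold flop one gets relations like $\chi^{m_0}\otimes 1-\chi^{m}\otimes\chi^{m'}$ with $m_0\in\sfS_{\sigma_0}$, which is not a pure difference), so an induction organized around pure differences does not see the hard relations. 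Second, ``induction on monomial degree dictated by the wall relation'' is not an argument one can check. The paper's proof of $\sqrt{I_\sA+I_{\sA'}}=I_\sB$ occupies four steps: a binomial-expansion trick showing that it suffices to kill $x^{k_i\alpha_{0i}}(x^{\beta_1}-x^{\beta_2})$ for $i=1,2$; a reduction when the relevant lattice points avoid $\Cone(u_1)^\perp$ or $\Cone(u_{n+1})^\perp$; a case analysis on membership in the dual faces $\sigma_{J_0\cup J_-}^*$ and $\sigma_{J_0\cup J_+}^*$; and a final step where the identity $\sigma_{J_0\cup J_-}^*+\sigma_{J_0\cup J_+}^*=(\sigma\cap\sigma')^\vee\cap\Cone(u)^\perp$ (with $u=-\sum_{i\in J_-}b_iu_i$ from the wall relation) together with strong convexity of $\sigma_0^\vee$ forces the remaining binomials into $I_\sA+I_{\sA'}$ on the nose. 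None of this structure --- in particular the role of the face cut out by $\Cone(u)^\perp$, which is where the flip hypothesis actually enters --- appears in your sketch, so the proposal as written does not establish the theorem.
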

The condition (2) of Proposition\autoref{fpcri_prop} is the biggest cause of uncertainty. So far, some useful criteria are given only for 3-dimensional case. Since the property of being reduced is local, we usually assume $X_{\Sigma_0}$ is an affine toric variety $U_{\sigma_0}$ defined by $\sigma_0$ and $U_{ji}\coloneqq U_{\sigma_j}\times_{U_{\sigma_0}}U_{\sigma_i}$. 
Also, we can assume $b_i\in\Z$, $\gcd(b_1, b_2, b_3, b_4) = 1$, and $b_4=1$ by the assumption $U_{\sigma_4}$ is smooth.
We provide a numerical criterion for the reduced property on the affine piece $U_{31}$. 
\begin{theorem}{\rm (=Lemma\autoref{n=3_reduced_equiv})}
Let $\{a\}_b$ denote the remainder of $a$ divided by $b$. If $g=\operatorname{gcd}(b_1,b_2)>0$ and $b_i=-gb_i'$ for $i=1$, $2$, then the following statements are equivalent:
\begin{enumerate}[(i)]
    \item $U_{31}$ is reduced.

    \item For all $0\leq\lambda\leq b_1'b_2'$, there exists a non-negative integer $y\leq\lambda/b_1'$ such that
    \begin{equation}
        \{g\lambda\}_{b_3}\geq g\cdot\{\lambda-b_1'y\}_{b_2'}.
    \end{equation}
\end{enumerate}
\end{theorem}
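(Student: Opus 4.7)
The plan is to translate reducedness into a lattice-point connectivity question on fibers of the semigroup addition, and then distill that question into the stated inequality.

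By Theorem~\ref{fpred_thm}, $(U_{31})_\red \cong \Spec \C[\sfS_{\sigma_3}+\sfS_{\sigma_1}]$, so $U_{31}$ is reduced if and only if the canonical surjection $\C[\sfS_{\sigma_3}]\otimes_{\C[\sfS_{\sigma_0}]}\C[\sfS_{\sigma_1}] \twoheadrightarrow \C[\sfS_{\sigma_3}+\sfS_{\sigma_1}]$ is injective. Decomposing both sides by the natural $M$-grading, this is equivalent to the statement that, for every $p \in \sfS_{\sigma_3}+\sfS_{\sigma_1}$, the fiber
\[
F_p = \{(m,m') \in \sfS_{\sigma_3} \times \sfS_{\sigma_1} : m+m' = p\}
\]
forms a single equivalence class under the relation generated by $(m,m') \sim (m+m_0, m'-m_0)$ for $m_0 \in \sfS_{\sigma_0}$.

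Using the hypothesis $b_4 = 1$, I would fix the basis $u_i = e_i$ for $i = 1, 2, 3$, so that $u_4 = g b_1' e_1 + g b_2' e_2 - b_3 e_3$. Writing $M$-coordinates as $(a, b, c)$ and introducing the linear functional $s := g b_1' a + g b_2' b - b_3 c$, the semigroups take the explicit form $\sfS_{\sigma_3} = \{a, b, s \ge 0\}$, $\sfS_{\sigma_1} = \{b, c, s \ge 0\}$ and $\sfS_{\sigma_0} = \{a, b, c, s \ge 0\}$. A key structural observation is that $(0, 0, c_0) \in \sfS_{\sigma_0}$ forces $c_0 = 0$: the $c$-coordinate cannot be shifted by an $\sfS_{\sigma_0}$-move while $(a, b)$ is kept fixed. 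Consequently, two elements of $F_p$ in the same column $(a, b)$ but at different $c$-values must be joined via a detour through other columns of the projection $F_p^{ab}$ of $F_p$ to the $(a, b)$-plane.

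The combinatorial core of the proof is to classify the resulting connectivity obstructions. I would identify a parameter $\lambda \in [0, b_1' b_2']$ which indexes these obstructions; concretely, $\lambda$ is the ``horizontal spread'' $g b_1' p_1 + g b_2' p_2$ of a minimal $F_p$ that could realize a given disconnection pattern. The upper bound $\lambda \le b_1' b_2'$ arises because when $\lambda > b_1' b_2'$, a Bezout argument using $\gcd(b_1', b_2') = 1$ guarantees that $F_p^{ab}$ contains lattice points of every residue class modulo $b_2'$, which (combined with the integrality congruence $b_3 c \equiv g b_1' a + g b_2' b \pmod{b_3}$) automatically supplies enough $\sfS_{\sigma_0}$-bridges to connect every pair of $c$-layers. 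In the critical range $\lambda \le b_1' b_2'$, the auxiliary integer $y$ plays the role of the $a$-coordinate of a candidate bridging column: $\{g \lambda\}_{b_3}$ measures the ``vertical room'' available there for a $c$-shift compatible with the congruence above, while $g \{\lambda - b_1' y\}_{b_2'}$ measures the residual obstruction after $y$ Bezout-type reductions. A valid bridge move exists precisely when the former dominates the latter.

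The main technical obstacle is matching both directions of the bi-implication. From a failure of the inequality at some $\lambda$, one must construct an explicit $p$ exhibiting two non-equivalent elements of $F_p$ (recovering non-reducedness); conversely, given the inequality for all admissible $\lambda$, one must produce, for any pair of elements in any $F_p$, an actual zigzag of $\sfS_{\sigma_0}$-moves inside $F_p$ joining them. This requires careful bookkeeping of the simultaneous congruences modulo $b_2'$ and modulo $b_3$, together with a case analysis on the shape of $F_p^{ab}$ near the boundary of its defining triangle.
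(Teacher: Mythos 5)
Your setup is sound and in fact parallels the paper's: reducedness of $U_{31}$ is equivalent to injectivity of $\C[\sfS_{\sigma_3}]\otimes_{\C[\sfS_{\sigma_0}]}\C[\sfS_{\sigma_1}]\to\C[\sfS_{\sigma_3}+\sfS_{\sigma_1}]$, and the $M$-graded reformulation via connectivity of the fibers $F_p$ is exactly the content of the paper's Lemma~\ref{gfbir_lm}. Your explicit coordinates (dual basis of $\{u_1,u_2,u_3\}$, the functional $s=\langle\cdot,u_4\rangle$, and the observation that $\sfS_{\sigma_0}$ contains no nonzero element supported only on the $u_3^\vee$-axis) are also correct and match the paper. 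However, everything after that point is a description of what would need to be done rather than a proof, and the two steps you defer are precisely where all the content of the equivalence with (ii) lives.

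Concretely, two things are missing. First, the paper does not attack the connectivity of every fiber $F_p$ by direct bookkeeping; it first reduces the whole problem to a single semigroup containment $\Gamma\subseteq\sfS_{\sigma_0}+\Gamma'$, where $\Gamma=M\cap\overline{(\sigma')^{\vee}\setminus\sigma^{\vee}}$ and $\Gamma'=\Gamma\cap\Cone(u_3)^{\perp}$. This reduction hinges on the structural fact that $\sfS_{\sigma_3}$ is generated over $\sfS_{\sigma_0}$ by the \emph{single} extra element $-u_3^{\vee}$, so one can induct on the number of copies of $-u_3^{\vee}$ appearing in an element of $F_p$ and peel them off one at a time; termination uses strong convexity of $(\sigma')^{\vee}$. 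Without this reduction the ``careful bookkeeping of simultaneous congruences'' you invoke has no organizing principle, and it is not clear your plan would converge. Second, the translation of the containment into inequality (ii) is itself a multi-step reduction: one shows the relevant condition $P(p_1,p_2)$ is automatic when $b_2'p_2/b_1'-p_1\geq b_2'$, is periodic under $(p_1,p_2)\mapsto(p_1+b_2',p_2+b_1')$, and hence only needs checking at $(p_1,p_2)=(\lambda n_1,\lambda n_2)$ with $\lambda=b_2'p_2-b_1'p_1\in[0,b_1'b_2']$. Note that your proposed identification of $\lambda$ as the ``horizontal spread'' $gb_1'p_1+gb_2'p_2$ is not the right quantity (it is a sum rather than the difference $b_2'p_2-b_1'p_1$, carries a spurious factor of $g$, and is not bounded by $b_1'b_2'$), which indicates the final translation has not actually been worked out. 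As it stands the proposal is a plausible roadmap, not a proof.
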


Finally, we show that the reduced property of the special affine piece implies the reduced property of the whole $X$. 
\begin{theorem}{\rm (=Theorem\autoref{r=1 reduced})}
$X$ is reduced if and only if $U_{31}$ is reduced.
\end{theorem}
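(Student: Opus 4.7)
The ``only if'' direction is immediate, since $U_{31}$ is an affine open of $X$ and every open subscheme of a reduced scheme is reduced. For the converse, assume $U_{31}$ is reduced. Because $X$ is covered by the four affine opens $U_{ji} = U_{\sigma_j}\times_{U_{\sigma_0}} U_{\sigma_i}$ with $j\in\{3,4\}$ and $i\in\{1,2\}$, and reducedness is local, it suffices to verify that $U_{32}$, $U_{41}$, and $U_{42}$ are also reduced. I plan to address $U_{41}$ and $U_{42}$ by a direct combinatorial argument using the smoothness of $U_{\sigma_4}$, and then $U_{32}$ via the $u_1\leftrightarrow u_2$ symmetry combined with Lemma\autoref{n=3_reduced_equiv}.

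For $U_{41}$ and $U_{42}$, my claim is that they are reduced without any further hypothesis. Writing $R_i \coloneqq \C[\sfS_{\sigma_4}]\otimes_{\C[\sfS_{\sigma_0}]}\C[\sfS_{\sigma_i}]$ with its natural $M$-grading, the graded piece $(R_i)_m$ has a basis indexed by equivalence classes of pairs $(\alpha,\beta)\in\sfS_{\sigma_4}\times\sfS_{\sigma_i}$ with $\alpha+\beta=m$, under the relation $(\alpha,\beta)\sim(\alpha+\gamma,\beta-\gamma)$ for $\gamma\in\sfS_{\sigma_0}$ (whenever both shifted coordinates remain in the correct semigroups). Because $\sfS_{\sigma_4}=\Z_{\geq 0}^3$ is free and moreover $e_1^*, e_2^*\in\sfS_{\sigma_0}$ (as $b_1', b_2'>0$), I would verify combinatorially that any two valid pairs with the same total degree are equivalent, so each $(R_i)_m$ is at most one-dimensional. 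This produces an injection $R_i\hookrightarrow\C[\sfS_{\sigma_4}+\sfS_{\sigma_i}]$ into a semigroup algebra, which is reduced, so $U_{4i}$ is reduced.

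For $U_{32}$, I plan to use the lattice automorphism of $N$ that swaps $u_1$ and $u_2$. This automorphism sends the wall relation with parameters $(b_1,b_2,b_3,b_4)$ to the one with $(b_2,b_1,b_3,b_4)$ and identifies $U_{32}$ of the original setup with $U_{31}$ of the swapped setup. Applying Lemma\autoref{n=3_reduced_equiv} in the swapped ordering, reducedness of $U_{32}$ is equivalent to condition~(ii) with $b_1'$ and $b_2'$ interchanged. It then remains to show that condition~(ii) is equivalent to its $b_1'\leftrightarrow b_2'$ swap, which I would establish by a direct combinatorial bijection between their solution sets $(\lambda,y)$.

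The main obstacle is the combinatorial argument for $U_{41}$ and $U_{42}$. While moves in the $e_1^*$ and $e_2^*$ directions are immediate (these basis vectors lie in $\sfS_{\sigma_0}$), the vector $e_3^*$ does not belong to $\sfS_{\sigma_0}$ (since $b_3>0$), so shifts in the $e_3^*$-coordinate can only be realized through larger elements of $\sfS_{\sigma_0}$ that combine $e_3^*$ with positive multiples of $e_1^*$ or $e_2^*$ to satisfy the inequality $b_1'm_1+b_2'm_2\geq b_3 m_3$. Constructing the requisite sequence of admissible moves, so that every intermediate pair stays in $\sfS_{\sigma_4}\times\sfS_{\sigma_i}$, is the delicate part of the analysis.
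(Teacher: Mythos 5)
Your ``only if'' direction and your treatment of $U_{32}$ (the $u_1\leftrightarrow u_2$ swap combined with Lemma~\ref{n=3_reduced_equiv}) are fine --- the latter is exactly Remark~\ref{U31 iff U32}. The fatal gap is the unconditional claim that $U_{41}$ and $U_{42}$ are reduced: this is false, and it fails precisely at the $u_3^\vee$-direction you flag as delicate. Take $(b_1,b_2,b_3,b_4)=(-2,-3,1,1)$, so $u_4=2u_1+3u_2-u_3$ with $\{u_1,u_2,u_3\}$ a $\Z$-basis of $N$; one checks $\sigma_0=\Cone(u_1,u_2,u_3,u_4)$ is a genuine four-generator strongly convex cone with wall $\Cone(u_1,u_2)$. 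In the degree $m=-u_1^\vee+u_2^\vee+u_3^\vee$ the only admissible pairs $(\alpha,\beta)\in\sfS_{\sigma_4}\times\sfS_{\sigma_1}$ with $\alpha+\beta=m$ are $(u_3^\vee,\,-u_1^\vee+u_2^\vee)$ and $(0,\,-u_1^\vee+u_2^\vee+u_3^\vee)$ (both second entries pair nonnegatively with $u_2$, $u_3$ and $u_4$, so they lie in $\sfS_{\sigma_1}$, while any $\alpha$ with a positive $u_1^\vee$- or $u_2^\vee$-coordinate is excluded by the inequality $\langle\beta,u_4\rangle\geq 0$). A single move would have to transfer $\gamma=u_3^\vee$ across, but $u_3^\vee\notin\sfS_{\sigma_0}$ since $\langle u_3^\vee,u_4\rangle=-1<0$, and there are no intermediate pairs to route through. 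Hence this graded piece of $\C[\sfS_{\sigma_4}]\otimes_{\C[\sfS_{\sigma_0}]}\C[\sfS_{\sigma_1}]$ is two-dimensional and $\chi^{u_3^\vee}\otimes\chi^{-u_1^\vee+u_2^\vee}-1\otimes\chi^{-u_1^\vee+u_2^\vee+u_3^\vee}$ is a nonzero nilpotent, so $U_{41}$ is not reduced. This is consistent with the proof of Theorem~\ref{smooth is redcued equivalent}, which shows that even when every cone is smooth the reducedness of $U_{\sigma_4}\times_{U_{\sigma_0}}U_{\sigma_1}$ is equivalent to the nontrivial condition $b_1\mid b_2$ or $b_2\mid b_1$; it cannot be automatic.

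Consequently the converse cannot be run chart by chart in the way you propose: reducedness of $U_{41}$ and $U_{42}$ genuinely depends on the hypothesis, and it is not clear how to transport information from the $\sigma_3$-charts to the $\sigma_4$-charts by a purely local computation. The paper's proof is global instead: Lemma~\ref{split} establishes $\pi_*\mathscr{I}=R^1\pi_*\mathscr{I}=0$ for the nilradical $\mathscr{I}$ via a \v{C}ech and $\Tor$ computation on the two-piece cover of $\pi^{-1}(U_{\sigma_j})$; then reducedness of $U_{31}$ and $U_{32}$ confines $\operatorname{Supp}\mathscr{I}$ to a single fiber isomorphic to $\P^1$ on which the projection restricts to an isomorphism onto $Z$, and $\pi_*\mathscr{I}=0$ forces $\mathscr{I}=0$. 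If you want to keep a combinatorial flavor, you would at minimum need to prove an implication of the form ``condition (ii) of Lemma~\ref{n=3_reduced_equiv} implies the analogous single-class property for the $\sigma_4$-charts,'' which is a substantive missing step rather than a routine verification.
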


As an application, we get the following expected result for 3-dimensional case. 
\begin{theorem}{\rm (=Theorem\autoref{term3flip_thm})}
If $X_{\Sigma}$ is  a $3$-dimensional simplicial toric variety with at worst terminal singularities, then the fiber product $X = X_{\Sigma}\times_{X_{\Sigma_0}}X_{\Sigma'}$ is the toric variety $X_{\tSigma}$.
\end{theorem}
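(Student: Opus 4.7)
By Proposition\autoref{fpcri_prop}, showing that $X$ is a normal toric variety---and hence, via Theorem\autoref{fpred_thm}(ii), equal to $X_{\tSigma}$---reduces to verifying three conditions on $X$: irreducibility, reducedness, and normality of the graph closure $\overline{\Gamma}_f$. Irreducibility is already furnished by Theorem\autoref{fpred_thm}(i), so the plan is to extract the remaining two conditions from the terminal hypothesis on $X_{\Sigma}$.

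The problem is local over $X_{\Sigma_0}$, so I would work on an affine chart $U_{\sigma_0}$ with wall relation $b_1u_1+b_2u_2+b_3u_3+b_4u_4=0$ (where $b_1,b_2<0<b_3,b_4$), and reduce to the setting $b_4=1$, equivalently to the case where the maximal cone $U_{\sigma_4}$ of $X_\Sigma$ is smooth (if necessary, by first decomposing $f$ into elementary pieces in the spirit of Reid). In this setup the remaining maximal cone $U_{\sigma_3}$ is identified, via the classification of $3$-dimensional terminal toric cyclic singularities, as the cyclic quotient of type $\tfrac{1}{b_3}(gb_1',gb_2',-1)$, where $g=\gcd(b_1,b_2)$ and $b_i=-gb_i'$ for $i=1,2$; the Reid--Tai criterion for its terminality becomes the family
\begin{equation*}
\{kgb_1'\}_{b_3}+\{kgb_2'\}_{b_3}>k,\qquad 1\leq k\leq b_3-1.
\end{equation*}
Normality of $\overline{\Gamma}_f$ is then immediate from Corollary\autoref{gftoric_cor}, whose $3$-dimensional refinement requires only that one affine piece of $X_\Sigma$ be smooth.

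For the reducedness of $X$, Theorem\autoref{r=1 reduced} reduces the problem to the single affine piece $U_{31}$, and Lemma\autoref{n=3_reduced_equiv} translates this into the combinatorial task of producing, for every $0\leq\lambda\leq b_1'b_2'$, a non-negative integer $y\leq\lambda/b_1'$ with
\begin{equation*}
\{g\lambda\}_{b_3}\;\geq\;g\cdot\{\lambda-b_1'y\}_{b_2'}.
\end{equation*}
I plan to derive such a $y$ from the Reid--Tai inequalities above: writing $\lambda=b_1'q+r$ with $0\leq r<b_1'$, the natural trial $y=q$ settles the cases where $\{g\lambda\}_{b_3}\geq gr$, while in the remaining cases one adjusts $y$ by a bounded amount controlled by the residue of $g\lambda$ modulo $b_3$ and invokes the Reid--Tai inequality for a suitable $k$. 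The main obstacle is precisely this last step: the terminal inequalities form a one-parameter family in $k$, whereas the reducedness criterion demands a choice of $y$ over a two-parameter range of $\lambda$, so a careful case analysis on $g\lambda\pmod{b_3}$ will be needed to bridge them. Once this is done, Proposition\autoref{fpcri_prop} yields that $X$ is a normal toric variety, and Theorem\autoref{fpred_thm}(ii) then gives $X=X_{\tSigma}$.
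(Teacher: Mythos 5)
Your skeleton coincides with the paper's: irreducibility from Theorem \ref{fpred_thm}, normality of $\Graphc$ from Corollary \ref{gftoric_cor} (using that $\sigma_{J_-\cup\{3\}}$ is smooth), reduction of reducedness to the single chart $U_{31}$ via Theorem \ref{r=1 reduced}, and finally the numerical criterion of Lemma \ref{n=3_reduced_equiv}. The divergence --- and the gap --- is in how that criterion gets verified. You correctly identify $U_{\sigma_3}$ as the cyclic quotient of type $\tfrac{1}{b_3}(gb_1',gb_2',-1)$ and write down the Reid--Tai inequalities $\{kgb_1'\}_{b_3}+\{kgb_2'\}_{b_3}>k$, but the passage from this one-parameter family of inequalities to the existence, for every $0\le\lambda\le b_1'b_2'$, of a $y$ with $\{g\lambda\}_{b_3}\ge g\cdot\{\lambda-b_1'y\}_{b_2'}$ is precisely the step you leave open ("the main obstacle\dots a careful case analysis will be needed"). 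That step is the entire content of the verification, so as written the argument is incomplete.

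The paper sidesteps this by citing the classification of three-dimensional terminal toric flips/flops (FSTU09): the wall relation is, up to symmetry, $-au_1-(r-a)u_2+ru_3+u_4=0$, $-au_1-u_2+ru_3+u_4=0$, or $-u_1-u_2+u_3+u_4=0$ with $\{u_1,u_2,u_3\}$ a $\Z$-basis and $\gcd(a,r)=1$. In the last two cases $b_2'=1$ and $y=0$ works trivially; in the first, $g=1$ and $y=\lfloor\lambda/r\rfloor$ gives $\lambda-ay=(r-a)y+\{\lambda\}_r$, hence $\{\lambda-ay\}_{r-a}=\{\{\lambda\}_r\}_{r-a}\le\{\lambda\}_r$. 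If you insist on starting from Reid--Tai, note that in dimension three those inequalities are equivalent (terminal lemma) to the statement that, modulo $b_3$, either $gb_1'+gb_2'\equiv 0$ or one of $gb_1',gb_2'$ is a unit times $1$ --- which is exactly the FSTU09 trichotomy --- so completing your route amounts to reproving that classification; it is cleaner to invoke it. Two smaller points: the parenthetical about decomposing $f$ into elementary pieces to force $b_4=1$ is unnecessary (the classification already supplies a smooth maximal cone adjacent to the wall) and risky, since reducedness of the fiber product does not obviously pass through a composition of flips; and the theorem as proved in the body carries the hypothesis $K_{X_\Sigma}\cdot\cR\le 0$, which is where that classification applies.
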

A generalized version of Lemma\autoref{n=3_reduced_equiv} can also help us to get a criterion for the smooth higher dimensional case. 
\begin{theorem}{\rm (=Theorem\autoref{smooth is redcued equivalent})}
Assume that $X_{\Sigma}$ is smooth of dimension $n$ and $X_{\Sigma_0}$ is affine with the wall relation \eqref{wallrel_eqn2}. Then the fiber product $X=X_{\Sigma}\times_{X_{\Sigma_0}}X_{\Sigma'}$ is the toric variety $X_{\tSigma}$ 
if and only if 
\begin{equation*}
b_{i}\mid b_{j} \mbox{ or }\ b_{j}\mid b_i
\end{equation*}
for any $i$, $j\in J_-$.
\end{theorem}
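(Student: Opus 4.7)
The plan is to reduce the statement to a numerical criterion on pairs of negative coefficients $b_i,b_j$ and then identify that criterion as pairwise divisibility. By Proposition~\ref{fpcri_prop}, $X=X_{\tSigma}$ as a normal toric variety is equivalent to (i) $X$ irreducible, (ii) $X$ reduced, and (iii) $\overline{\Gamma}_f$ normal. Theorem~\ref{fpred_thm}(i) supplies (i) automatically, and Corollary~\ref{gftoric_cor} supplies (iii) under the smoothness hypothesis on $X_\Sigma$. The theorem therefore reduces to the purely arithmetic statement: \emph{$X$ is reduced if and only if $b_i\mid b_j$ or $b_j\mid b_i$ for every pair $i,j\in J_-$}.

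Since reducedness is local, I would pass to the affine pieces $U_{ji}:=U_{\sigma_j}\times_{U_{\sigma_0}}U_{\sigma_i}$ with $j\in\{\beta+1,\ldots,n+1\}$ and $i\in J_-$. Smoothness of $X_\Sigma$ means each $\sigma_j$ is regular; using $\sigma_{n+1}$, the primitive vectors $u_1,\ldots,u_n$ form a $\bZ$-basis of $N$, so the wall relation forces $b_k\in\bZ$ for every $k$, and $\gcd_k b_k=1$ (since $u_{n+1}$ is primitive). I would then prove a higher-dimensional analogue of Theorem~\ref{r=1 reduced}: $X$ is reduced if and only if $U_{n+1,i}$ is reduced for every $i\in J_-$, and the obstruction inside each $U_{n+1,i}$ is detected \emph{pairwise} by a single index $j\in J_-\setminus\{i\}$. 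This is plausible because the smoothness of $\sigma_{n+1}$ decouples the variables indexed by $J_-$, and a pairwise version of Lemma~\ref{n=3_reduced_equiv} then converts reducedness of $U_{n+1,i}$ into the existence, for each $0\le\lambda\le b_i'b_j'$, of a non-negative integer $y\le\lambda/b_i'$ satisfying a specific remainder inequality modulo $|b_i|$ and $|b_j|$.

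The remaining combinatorial equivalence asserts that this pairwise existence criterion holds for every pair $(i,j)\in J_-\times J_-$ if and only if $|b_i|\mid|b_j|$ or $|b_j|\mid|b_i|$ for every such pair. For the easy direction, if $|b_i|\mid|b_j|$ (so $b_i'\mid b_j'$), the choice $y=\lfloor\lambda/b_i'\rfloor$ forces the required inequality via tower-divisibility of the residues. The main obstacle is the converse: given incomparable $|b_p|,|b_q|$ for some $p,q\in J_-$, one must exhibit an explicit $\lambda$ for which \emph{no} admissible $y$ works, thereby producing a nilpotent in the coordinate ring of $U_{n+1,p}$. This is essentially a residue-theoretic obstruction in $\bZ/|b_p|\bZ\oplus\bZ/|b_q|\bZ$, and the crux is showing that incomparability is precisely what prevents the simultaneous solution of the inequalities $\{g\lambda\}_{b_k}\ge g\cdot\{\lambda-b_i'y\}_{b_j'}$ as $y$ ranges. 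This lattice/Bezout-style construction is the technical heart of the proof.
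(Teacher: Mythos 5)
Your skeleton matches the paper's: reduce to reducedness via Proposition~\ref{fpcri_prop}, Theorem~\ref{fpred_thm} and Corollary~\ref{gftoric_cor}\,(2), work locally on the pieces $U_{\sigma_j}\times_{U_{\sigma_0}}U_{\sigma_i}$ with $j\in J_+$, $i\in J_-$, and match reducedness against an arithmetic condition on the negative coefficients; the answer you predict, including the Bezout-style counterexample in the incomparable case, is the one the paper obtains. But the two steps that carry the actual content are asserted rather than proved, and the first is not merely a matter of writing out details. You claim the obstruction in $U_{n+1,i}$ is ``detected pairwise'' and that a pairwise version of Lemma~\ref{n=3_reduced_equiv} turns reducedness into a two-variable remainder inequality for each pair $(i,j)$. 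In the paper, reducedness of $U_{\sigma_{n+1}}\times_{U_{\sigma_0}}U_{\sigma_1}$ is shown equivalent to a semigroup containment $\Gamma\subseteq\sfS_{\sigma_0}+\Gamma''$, i.e.\ to a condition ($\spadesuit$) on lattice points $z=\sum_k z_k u_k^\vee$ in which \emph{all} coordinates indexed by $J_-$ appear simultaneously; this is not a priori a conjunction of two-variable conditions. The direction ``reduced $\Rightarrow$ pairwise divisibility'' is genuinely pairwise (one tests $z=-n_1u_1^\vee+n_iu_i^\vee$ with $b_i'n_i=b_1'n_1+1$ and uses the estimate $n_i\le b_1'$, with equality only if $b_1'=1$ --- the construction you name but do not supply). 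The converse is not: the paper proves ``divisibility $\Rightarrow$ reduced'' by observing that pairwise comparability forces the $|b_k|$, $k\in J_-$, to form a divisibility \emph{chain}, dividing the wall relation by the minimal one so that some $b_k=-1$, and only then exhibiting the decomposition $z=z'+(z-z')$ explicitly. Checking your two-variable inequality for every pair does not by itself produce this multi-variable decomposition, so the sufficiency half of your plan has a real hole.

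Two smaller omissions: you should record that smoothness of every $\sigma_j$, $j\in J_+$, forces $b_j=1$ for all $j\in J_+$ (each such $b_j$ divides every $b_k$), since this is what collapses the $\{g\lambda\}_{b_3}$-type term of Lemma~\ref{n=3_reduced_equiv} to $0$ and reduces your remainder inequality to the divisibility statement $\lambda\in b_i'\eZ+b_j'\eZ$; and your ``higher-dimensional analogue of Theorem~\ref{r=1 reduced}'' is not needed in the strong form of that theorem --- the maximal affine pieces $U_{ji}$ already cover $X$ and are permuted by the symmetry $b_j=1$, which is all the paper uses. Also note that the paper must treat $|J_+|=2$ and $|J_+|\neq 2$ separately (via $\Gamma'$ versus $\Gamma''$) when passing from reducedness to the containment condition; your proposal does not anticipate this case distinction.
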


This work is also motivated by the conjecture given by C.-L.Wang in 2001 (ref. \cite{Wang01}), which said that for $K$-equivalent manifolds under birational map $f\colon X\dashrightarrow X'$, there is a {\it naturally attached} correspondence $T\in A^{\dim X}(X\times X')$ of the form $T = \bar\Gamma_f + \sum_i T_i$ with $\bar\Gamma_f\subseteq X\times X'$ the cycle of graph closure of $f$ and with $T_i$'s being certain degenerate correspondences (i.e.\ $T_i$ has positive dimensional fibers when projecting to $X$ or $X'$) such that $T$ is an isomorphism of Chow motives. In \cite{LLW10}, the authors showed that for an ordinary $\P^r$ flop $f\colon X \dashrightarrow X'$, the graph closure $[\bar\Gamma_f] \in A^*(X \times X')$ identifies the Chow motives $\hat{X}$ of $X$ and $\hat{X'}$ of $X'$. More generally, for $f$ an ordinary $(r, r')$ flip with $r \le r'$, the graph closure $[\bar\Gamma_f] \in A^*(X \times X')$ identifies the Chow motive $\hat X$ of $X$ as a sub-motive of $\hat X'$ which preserves also the Poincar\'e pairing on cohomology groups. 

In toric case, we have the following observation which is well-known for experts. (For example, cf. \cite{CLS} or \cite{Kawa16}.)
\begin{remark}{\rm (= Proposition\autoref{smooth=ordinary} + Theorem\autoref{K-equiv})}
\begin{enumerate}[(i)]
\item Any two $K$-equivalent simplicial 
terminal toric varieties can be connected to each other by a sequence of
toric flops.
\item Any smooth toric flop is an ordinary flop.
\end{enumerate}
\end{remark}

We would like to give a simple proof in Appendix \ref{app_sec} by supplementing Reid's theory on toric minimal model program. Now, together with the result in \cite{LLW10}, if $K$-equivalent toric manifolds are connected by smooth flops, then they admit canonically isomorphic integral cohomology groups via the graph closure. To study the conjecture for toric manifolds in \cite{Wang01}, it is important to know for a general toric flop whether the graph closure still provides a canonical isomorphism between the integral cohomology groups, or we need extra degenerate correspondence in their fiber product.

{\bf As an application}, under a 3-dimensional terminal toric flop $f$, its fiber product is equal to the graph closure and thus is expected to give the equivalence of their Chow motives. 
\medskip

\emph{Acknowledgments.} 
This paper is a continuation of the undergraduate thesis of T.-C.~Chen at National Taiwan University under the supervision of H.-W.~Lin. We wish to thank J.-H.~Chong and S.-Y.~Lee for useful discussions related to this paper. In particular, we appreciate that C.-L.~Wang provided some geometric point of view for this work. H.-W.~Lin and S.-S.~Wang are supported by the National Science and Technology Council (NSTC), and S.-S. Wang in part by Grant 114-2115-M-A49-007-MY3.. We are grateful to Taida Institute of Mathematical Sciences (TIMS) for its constant support which makes this collaboration possible.


\section{Preliminaries}\label{pre_sec}



We begin by recalling the basic notions of toric varieties and fixing our notation. For further details, see \cite{CLS} and \cite{Fulton93}.

A toric variety is a (not necessarily normal) variety $X$ containing an algebraic torus as a Zariski open subset, together with an algebraic action of the torus on $X$ that extends the natural action of the torus on itself. Let $M \simeq \bZ^n$ be the lattice of characters of the torus, with dual lattice $N \coloneqq \Hom_\bZ (M, \bZ)$. Then the torus is canonically isomorphic to $N \otimes_\bZ \bC^\ast$, denoted by $T_N$. Throughout this paper, we fix the lattices $M$ and $N$ of rank $n$, and their $\bR$-linear extensions $M_\bR = M \otimes \bR$ and $N_\bR = N \otimes \bR$.

\subsection{Affine Toric Varieties}\label{afftoric_subsec}

Given a set $\sA = \{m_1, \ldots, m_s\} \subseteq M$, we get characters $\chi^{m_i} \colon T_N \to \bC^\ast$, the affine semigroup $\sfS \coloneqq \eZ \sA \subseteq M$ and the \emph{semigroup algebra} $\bC [\sfS] \coloneqq \bC [\chi^{m_1}, \ldots, \chi^{m_s}]$ with multiplication induced by the semigroup structure of $\sfS$. This gives rise to an affine toric variety $\Spec (\bC [\sfS])$, which is the Zariski closure of the image of $T_N \to \bC^s$ defined by characters $\chi^{m_i}$. In particular, the dimension of the affine toric variety is the rank of $\bZ \sA$.

\begin{example}
The affine semigroup $\eZ^s \subseteq \bZ^s$ gives the polynomial ring
\[
    \bC [\eZ^s] = \bC [x_1, \ldots, x_s],
\]
where $x_i =\chi^{e_i}$ and $\{e_1, \ldots, e_s\}$ is the standard basis of $\bZ^s$.
\end{example}

In what follows, we define $\bZ^{\sA} = \bigoplus_{i=1}^s\bZ e_{m_i} \cong \bZ^s$ and also write $\bC [\eZ^{\sA}] = \bC [\eZ^s]$ with $\chi^{e_{m_i}} = x_i$ for $\sA = \{m_1, \ldots, m_s\}$ when there is no danger of confusion. An inclusion $\sA \subseteq \sB$ induces a natural homomorphism $\bC [\eZ^{\sA}] \to \bC [\eZ^{\sB}]$ of polynomial rings. 

\begin{definition}
The \emph{toric ideal} $I_\sA$ of the affine toric variety $\Spec (\bC [\sfS])$ is defined by the kernel of the surjective $\bC$-algebra homomorphism $\bC [\eZ^{\sA}] \to \bC [\sfS]$ where $x_i \mapsto \chi^{m_i}$.
\end{definition}

We will use the convention in this paper that $m_\alpha$ denotes the lattice point $\sum_{i = 1}^r a_i m_i$ for any given set $\mathscr{S} = \{m_1, \ldots, m_r\} \subseteq M$ and $\alpha = (a_1, \ldots, a_r) \in \bZ^r = \bZ^{\mathscr{S}}$.
It induces a map of character lattices
\[
    \bZ^\sA = \bZ^s \to M
\]
that sends $\alpha$ to the lattice point $m_\alpha$. Let $L_\sA$ be defined by the following exact sequence
\begin{equation*}
    0 \to L_{\sA} \to \bZ^\sA \to M.
\end{equation*}
Then the toric ideal of $\Spec (\bC [\sfS])$ is the prime ideal
\begin{equation}\label{tid_eqn}
    I_\sA = \langle x^\alpha - x^\beta \mid \alpha, \beta \in \eZ^s \mbox{ and } \alpha - \beta \in L_\sA  \rangle
\end{equation}
where $x^\gamma = x_1^{\gamma_1} x_2^{\gamma_2} \cdots x_s^{\gamma_s}$ for any $\gamma \in \eZ^s$.

\begin{example}
Given a rational polyhedral cone $\sigma \subseteq N_\bR$, the lattice points
\begin{equation*}
    \sfS_\sigma \coloneqq \sigma^\vee \cap M \subseteq M
\end{equation*}
form a semigroup. It is finitely generated by Gordan’s Lemma. Therefore this affine semigroup gives us an affine toric variety
\begin{equation*}
    U_\sigma \coloneqq \Spec (\bC [\sfS_\sigma]).
\end{equation*}
Furthermore, the cone $\sigma$ is strongly convex if and only if $\dim U_\sigma = \dim \sigma^\vee = \dim_\bR M_\bR$. 
\end{example}


In fact, $\Spec (\bC [\sfS])$ is not necessarily normal.  If we regard $\sA$ as a generating set in $M$ and set $\sigma = \Cone (\sA)^\vee \subseteq N_\bR$.  By \cite[Proposition 1.3.8]{CLS}, 
$\sigma$ is a strongly convex rational polyhedral cone and the inclusion $\bC [\sfS] \subseteq \bC [\sfS_\sigma]$ induces a morphism $U_\sigma \to \Spec (\bC [\sfS])$ that is the normalization map of $\Spec (\bC [\sfS])$. It gives us an important fact. 
\begin{fact}\label{normalization}
 $\Spec (\bC [\sfS_\sigma])$ is the normalization of $\Spec (\bC [\sfS])$. Actually, the semigroup $\sfS_\sigma$ is the  \emph{saturation} of $\sfS$.
 \end{fact}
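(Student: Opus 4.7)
The plan is to verify the three ingredients of the normalization in turn: (a) $\sfS_\sigma$ coincides with the saturation of $\sfS$, (b) $\bC[\sfS_\sigma]$ is a finite integral extension of $\bC[\sfS]$ sharing the same fraction field, and (c) $\bC[\sfS_\sigma]$ is integrally closed. Together these give a finite birational morphism $U_\sigma \to \Spec(\bC[\sfS])$ from a normal source, which is exactly the normalization.

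For (a), biduality for the closed rational polyhedral cone $\Cone(\sA)$ gives $\sigma^\vee = \Cone(\sA)^{\vee\vee} = \Cone(\sA)$, so $\sfS_\sigma = \Cone(\sA) \cap M$. One direction of the saturation identification is immediate: if $km \in \sfS$ for some positive integer $k$, then $m \in \Cone(\sA)$. For the reverse, given $m \in \Cone(\sA) \cap M$, Carath\'eodory's theorem provides a representation $m = \sum q_i m_i$ supported on an $\bR$-linearly independent subset of $\sA$; since $m$ and the $m_i$'s are lattice points, the uniquely determined coefficients $q_i$ are nonnegative rationals, and clearing denominators yields $km \in \sfS$. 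Here one tacitly uses $\bZ\sA = M$, which may be arranged by replacing $M$ with the sublattice $\bZ\sA$.

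For (b), integrality follows immediately from (a): given $m \in \sfS_\sigma$, choosing $k > 0$ with $km \in \sfS$ yields the monic relation $y^k - \chi^{km} = 0$ for $y = \chi^m$. Gordan's lemma gives finite generation of $\sfS_\sigma$, hence of $\bC[\sfS_\sigma]$ as a $\bC$-algebra; combined with integrality, $\bC[\sfS_\sigma]$ is module-finite over $\bC[\sfS]$. Both rings lie inside $\bC[\bZ\sA]$ and generate the same fraction field because $\bZ\sfS = \bZ\sA$, so every $\chi^m$ with $m \in \bZ\sA$ is a quotient of elements of $\bC[\sfS]$.

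For (c), I would use the decomposition $\sigma^\vee = \bigcap_\rho \rho^\vee$, where $\rho$ runs over the one-dimensional faces of $\sigma$, to conclude $\bC[\sfS_\sigma] = \bigcap_\rho \bC[\sfS_\rho]$ as subrings of $\bC[M]$. Each $\rho$ is generated by a primitive lattice vector $u_\rho$, and completing $u_\rho$ to a $\bZ$-basis of $N$ identifies $\sfS_\rho$ with $\{m \in \bZ^n : m_1 \geq 0\}$; the corresponding ring $\bC[x_1, x_2^{\pm 1}, \ldots, x_n^{\pm 1}]$ is a localization of a polynomial ring, hence a UFD and in particular normal. An intersection of normal subrings of a common field is normal, yielding (c). The main delicate point is this normality step, where primitivity of the edge generator is precisely what makes the basis change integral; everything else is essentially formal.
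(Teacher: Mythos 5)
Your proof is correct and is essentially the argument the paper relies on: the paper does not prove this Fact itself but cites \cite[Proposition 1.3.8]{CLS}, and your chain of steps (identifying $\sfS_\sigma = \Cone(\sA)\cap M$ as the saturation via Carath\'eodory plus rationality of unique solutions of integer linear systems, then finiteness and birationality of $\bC[\sfS]\subseteq\bC[\sfS_\sigma]$, then normality of $\bC[\sfS_\sigma]$ as an intersection of the half-space algebras $\bC[\sfS_\rho]\cong\bC[x_1,x_2^{\pm1},\ldots,x_n^{\pm1}]$) is exactly the textbook proof being cited. Your explicit flagging of the convention $\bZ\sA = M$ is the right point of care and matches the paper's hypothesis that $\sA$ is ``regarded as a generating set in $M$''; without it the map $U_\sigma \to \Spec(\bC[\sfS])$ need not be birational and the Fact would fail.
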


The following lemma is used in the proof of Theorem \ref{gfbir_prop}.

\begin{lemma}\label{easy_lem}
Let $\sigma$ be a strongly convex rational polyhedral cone in $N_\bR$. For any $m_1, m_2 \in M$, we can find a lattice point $m_0 \in \sfS_\sigma$ such that $m_0 + m_i \in \sfS_\sigma$ for $i = 1, 2$.   
\end{lemma}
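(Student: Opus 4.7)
The plan is to use the fact that $\sigma$ being strongly convex is equivalent to $\sigma^\vee$ being full-dimensional in $M_\bR$, so in particular $\mathrm{int}(\sigma^\vee)$ is nonempty and contains lattice points. I would aim to take $m_0$ to be a large multiple of such an interior lattice point, so that adding the bounded vectors $m_1, m_2$ cannot push $m_0 + m_i$ outside $\sigma^\vee$.

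More concretely, the first step is to recall the characterization $\sigma^\vee = \{m \in M_\bR \mid \langle m, u_\rho\rangle \geq 0 \text{ for every ray } \rho \text{ of } \sigma\}$, where $u_\rho$ runs over the primitive generators of the finitely many rays of $\sigma$. Since $\sigma$ is strongly convex, $\sigma^\vee$ has nonempty interior $\mathrm{int}(\sigma^\vee) = \{m \mid \langle m, u_\rho\rangle > 0 \text{ for all } \rho\}$, and since $\sigma^\vee$ is a rational polyhedron, I can choose a lattice point $m' \in \mathrm{int}(\sigma^\vee) \cap M$.

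The second step is to bound things numerically. Set
\[
    A \coloneqq \min_\rho \langle m', u_\rho\rangle > 0, \qquad B \coloneqq \max_\rho \max_{i=1,2} \lvert \langle m_i, u_\rho\rangle \rvert.
\]
Choose any positive integer $k$ with $kA > B$, and put $m_0 \coloneqq k m'$. Then $m_0 \in \sfS_\sigma$ (it is a nonnegative integer multiple of a semigroup element), and for each ray $\rho$ and each $i \in \{1,2\}$,
\[
    \langle m_0 + m_i, u_\rho\rangle = k\langle m', u_\rho\rangle + \langle m_i, u_\rho\rangle \geq kA - B > 0,
\]
so $m_0 + m_i$ lies in $M \cap \sigma^\vee = \sfS_\sigma$, as required.

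I do not expect any genuine obstacle here; the only subtlety is making sure that $\mathrm{int}(\sigma^\vee) \cap M \neq \emptyset$, which is standard (the interior is a nonempty open rational polyhedral cone, hence contains lattice points, e.g.\ the sum of the primitive ray generators of $\sigma^\vee$ scaled appropriately). Everything else is a direct estimate on finitely many pairings.
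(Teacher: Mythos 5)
Your proof is correct and follows essentially the same route as the paper: fix a lattice point $m'$ in $\inte(\sigma^\vee)$ (which is nonempty since $\sigma$ is strongly convex) and take $m_0 = \ell m'$ for $\ell$ large enough that $m' + (1/\ell)m_i$ stays in the interior. Your version merely makes the ``sufficiently large $\ell$'' explicit via the pairings with the ray generators of $\sigma$.
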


\begin{proof}
Notice that $\dim \sigma^\vee = \dim_\bR M_\bR$ since $\sigma$ is strongly convex. Fix a lattice point $m'$ in the interior $\inte (\sigma^\vee)$ of $\sigma^\vee$. Since $m' + (1 / \ell) m_i \in \inte (\sigma^\vee)$ for a sufficiently large integer $\ell$,   $\ell m' + m_i \in \sfS_\sigma$ for $i = 1, 2$.
\end{proof}



%



Assume that $\sigma$, $\sigma'$ and $\sigma_0$ are three rational polyhedral cones  in $N_\bR$ such that $\sigma_0$ contains $\sigma$ and $\sigma'$. Let $\sA_0 \subseteq M$ be a generating set of $\sfS_{\sigma_0}$ and similarly for $\sA$ and $\sA'$. Since $\sigma^\vee \cap (\sigma')^\vee$ contains $\sigma_0^\vee$, we may assume that $\sA \cap \sA' \supseteq \sA_0$. Consider the affine semigroup
\begin{equation}\label{splitsgp}
    \sfSp \coloneqq \eZ^{\sA_0}\oplus\eZ^{\sA\setminus\sA_0}\oplus\eZ^{\sA'\setminus\sA_0}.
\end{equation}
Via $\bC [\eZ^\sA]$ and $\bC [\eZ^{\sA'}]$ as subrings of $\bC [\sfSp]$, by abuse of notation, we still write $I_\sA$ and $I_{\sA'}$ for the corresponding ideals in $\bC [\sfSp]$. The following lemma characterizes binomials of $\bC [\sfSp]$ which belong to the sum $I_\sA + I_{\sA'}$ of toric ideals, which is used in the proof of Lemma \ref{n=3_reduced_equiv}.

\begin{lemma}\label{gfbir_lm}
For $\alpha_0$, $\beta_0 \in \eZ^{\sA}$, $\alpha$, $\beta \in \eZ^{\sA \setminus \sA_0}$ and $\alpha'$, $\beta' \in \eZ^{\sA' \setminus \sA_0}$, the binomial
\begin{equation}\label{gfbir_lm_eq}
    x^{\alpha_0 + \alpha + \alpha'} -x^{\beta_0 + \beta + \beta'} \in I_{\sA} + I_{\sA'} \subseteq \bC[\sfSp]
\end{equation}
if and only if there exists a sequence $\{(\gamma_{0i}, \gamma_i, \gamma_i')\}_{i = 0}^m \subseteq \sfSp$ such that
\[
    (\gamma_{01}, \gamma_1, \gamma_1') = (\alpha_0, \alpha, \alpha'), \quad (\gamma_{0m}, \gamma_m, \gamma_m') = (\beta_0, \beta, \beta'),
\]
and for each $1 \leq i \leq m-1$, either
\begin{align}
    \gamma_i = \gamma_{i+1}  &\mbox{ and }  (\gamma_{0i} + \gamma_i') -(\gamma_{0(i+1)} + \gamma_{i+1}') \in L_{\sA'}, \mbox{ or} \label{relation1}\\
    \gamma_i' = \gamma_{i+1}' &\mbox{ and } (\gamma_{0i} + \gamma_i) - (\gamma_{0(i+1)} + \gamma_{i+1}) \in L_{\sA} \label{relation2}
\end{align}
holds.
\end{lemma}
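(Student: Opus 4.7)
The plan is to split the biconditional into the easy and hard halves. The ``if'' direction is a telescoping sum: given the chain, I would write
\begin{equation*}
x^{\alpha_0+\alpha+\alpha'}-x^{\beta_0+\beta+\beta'} \;=\; \sum_{i=1}^{m-1}\bigl(x^{\gamma_{0i}+\gamma_i+\gamma_i'}-x^{\gamma_{0(i+1)}+\gamma_{i+1}+\gamma_{i+1}'}\bigr),
\end{equation*}
then observe that each summand in which \eqref{relation1} holds factors as $x^{\gamma_i}\bigl(x^{\gamma_{0i}+\gamma_i'}-x^{\gamma_{0(i+1)}+\gamma_{i+1}'}\bigr)$, whose inner binomial is a generator of $I_{\sA'}$ in view of \eqref{tid_eqn}; symmetrically, a summand satisfying \eqref{relation2} lies in $I_{\sA}$.

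For the ``only if'' direction, my plan is to build a universal target ring that captures the chain-equivalence relation. Let $\approx$ denote the equivalence relation on $\sfSp$ generated by the two elementary moves \eqref{relation1}-\eqref{relation2}. I would verify that $\approx$ is a \emph{congruence} under addition, i.e., that if $u\approx v$ via a chain in $\sfSp$ then $u+w\approx v+w$ via the chain obtained by shifting every triple by $w$. Granting this, the assignment
\begin{equation*}
R \;\coloneqq\; \bigoplus_{[u]\in\sfSp/\approx} \bC\cdot e_{[u]},\qquad e_{[u]}\cdot e_{[v]} \coloneqq e_{[u+v]},
\end{equation*}
defines a $\bC$-algebra, together with a surjective $\bC$-algebra map $\pi\colon \bC[\sfSp]\to R$, $x^u\mapsto e_{[u]}$, characterized by $\pi(x^u)=\pi(x^v)$ if and only if $u\approx v$. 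The remaining step is to check $I_{\sA}+I_{\sA'}\subseteq \ker(\pi)$: every generator of $I_{\sA}\cdot\bC[\sfSp]$ has the form $x^{\mu+(\rho_0,\rho,0)}-x^{\mu+(\tau_0,\tau,0)}$ with $\mu\in\sfSp$ and $(\rho_0-\tau_0,\rho-\tau)\in L_{\sA}$, and its two exponents are joined by a single move of type \eqref{relation2}; the $I_{\sA'}$-generators are handled symmetrically. Applying $\pi$ to the hypothesis $x^{\alpha_0+\alpha+\alpha'}-x^{\beta_0+\beta+\beta'}\in I_{\sA}+I_{\sA'}$ then forces $\approx$-equivalence of the two exponents, which is precisely the existence of the required chain.

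The main obstacle I foresee is the congruence property of $\approx$. The subtle point is that a move of type \eqref{relation1} adjusts the $\sA_0$- and $\sA'\setminus\sA_0$-components by an element of $L_{\sA'}\subseteq\bZ^{\sA'}$ whose entries can be negative, so a priori translated intermediates might leave $\sfSp$. However, adding a non-negative vector $w\in\sfSp$ to each triple of a chain lying in $\sfSp$ preserves component-wise non-negativity, and the $L_{\sA}$- or $L_{\sA'}$-condition defining each move is unchanged under the identical shift applied to both sides. Once this is pinned down, everything else reduces to unwinding the definitions of $I_{\sA}$, $I_{\sA'}$, and the polynomial ring $\bC[\sfSp]$.
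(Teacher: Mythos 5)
Your proof is correct, and the ``only if'' direction takes a genuinely different route from the paper's. Both arguments handle the ``if'' direction by the same telescoping-and-factoring computation. For the converse, the paper expands the given element of $I_{\sA}+I_{\sA'}$ as an explicit finite $\bC$-linear combination of binomials $c_k(x^{\gamma_k^+}-x^{\gamma_k^-})$ whose exponent pairs satisfy \eqref{relation1} or \eqref{relation2}, builds a weighted directed graph on the occurring exponents with edge weights $c_k$, and uses a conservation-of-degree (flow) argument: the total degree is $+1$ at $\alpha_0+\alpha+\alpha'$, $-1$ at $\beta_0+\beta+\beta'$, and $0$ elsewhere, and since it sums to zero on each connected component the two distinguished vertices must be joined by a path, which yields the chain. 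You instead pass to the universal quotient: you verify that the relation $\approx$ generated by the two moves is a semigroup congruence on $\sfSp$ (your check that translating a whole chain by $w\in\sfSp$ preserves membership in $\sfSp$ and the $L_{\sA}$/$L_{\sA'}$ conditions is exactly the point that needs care, and it goes through), form the algebra $\bC[\sfSp/\approx]$, show $I_{\sA}+I_{\sA'}$ lies in the kernel of the projection because each monomial multiple of a generator is a single move, and read off the chain from the equality of basis vectors. This is the standard congruence-theoretic treatment of binomial ideals in the style of Eisenbud--Sturmfels: it is cleaner, avoids any bookkeeping about cancellations among the coefficients $c_k$, and generalizes verbatim; the paper's graph argument is more elementary and self-contained but requires the flow computation. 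Both are complete proofs of the lemma.
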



\begin{proof}
The necessary part is clear, since
\[x^{\gamma_{0i}+\gamma_i+\gamma'_i}-x^{\gamma_{0(i+1)}+\gamma_{i+1}+\gamma_{i+1}'}=\begin{cases}
x^{\gamma_i}(x^{\gamma_{0i}+\gamma_i'}-x^{\gamma_{0(i+1)}+\gamma_{i+1}'})\in I_{\sA'} & \text{if } \eqref{relation1} \text{ holds}\\
x^{\gamma_i'}(x^{\gamma_{0i}+\gamma_i}-x^{\gamma_{0(i+1)}+\gamma_{i+1}})\in I_{\sA} & \text{if } \eqref{relation2} \text{ holds}\\
\end{cases}\]
for each $1\leq i\leq m-1$.

Conversely, suppose that \eqref{gfbir_lm_eq} holds. We write
\[x^{\alpha_0+\alpha+\alpha'}-x^{\beta_0+\beta+\beta'}=\sum_{k=1}^\ell c_k(x^{\gamma_k^+}-x^{\gamma_k^-}),\]
where $c_k\in\C$ and $\{(\gamma^+_k,\gamma^-_k)\}_{k=1}^\ell$ are distinct pairs satisfying \eqref{relation1} or \eqref{relation2} under the decomposition \eqref{splitsgp}. 
Consider a weighed directed graph $G$ with vertices $\{\gamma_k^+,\gamma_k^-\mid 1\leq k\leq \ell\}$, directed edges from $\gamma_k^-$ to $\gamma_k^+$ with weight $\operatorname{wt}(\gamma_k^-,\gamma_k^+) = c_k$. Define the total degree at vertex $v$ by
\[\sum_{(u,v)\in E(G)} \operatorname{wt}(u,v)-\sum_{(v,u)\in E(G)}\operatorname{wt}(v,u).\]
Then the total degree at $\alpha_0+\alpha+\alpha'$ is $1$, at $\beta_0+\beta+\beta'$ is $-1$, and at other vertices are $0$. Since the sum of total degrees over the vertices in a connected component of $G$ is zero, we conclude that 
$\alpha_0+\alpha+\alpha'$ and $\beta_0+\beta+\beta'$ are in the same connected component of $G$. Consequently, they can be connected by undirected edges, and the vertices along this path form the desired sequence, since the two vertices connected by the edge in $G$ satisfy \eqref{relation1} or \eqref{relation2}.
\end{proof}

\subsection{Fans}\label{fan_subsec}

For any fan $\Sigma$ in $N_{\bR}$, there is a corresponding normal toric variety $X_\Sigma$ on which the torus $T_N$ acts naturally. If a toric variety is normal, then it always comes from a fan $\Sigma$ in $N_\bR$ by Sumihiro's Theorem \cite{Sumihiro74}. The assignment
$\Sigma \mapsto X_\Sigma$ yields an equivalence of categories between the category of fans with morphisms of fans and the category of normal toric varieties with toric morphisms.




Given a fan $\Sigma_0$ in $N_\bR$, a fan $\Sigma$ \emph{refines} $\Sigma_0$ if every cone of $\Sigma$ is contained in a cone of $\Sigma_0$ and $|\Sigma| = |\Sigma_0|$. This yields a toric morphism $X_{\Sigma} \to X_{\Sigma_0}$ whose restriction map on $T_N$ is the identity map. 

\begin{definition}
Let $\Sigma$ and $\Sigma'$ be two refinements of $\Sigma_0$. We define the coarsest common refinement of $\Sigma$ and $\Sigma'$ over $\Sigma_0$ by
\begin{equation}\label{fp_fan}
    \tSigma \coloneqq \{ \sigma \cap \sigma' \mid \sigma\in\Sigma,\sigma'\in\Sigma' \mbox{ such that } \sigma, \sigma' \subseteq \sigma_0 \mbox{ for some } \sigma_0 \in \Sigma_0 \}
\end{equation}
in $N_\bR$.
\end{definition}

\begin{lm}\label{ccr_is_fan}
The coarsest common refinement $\widetilde{\Sigma}$ of $\Sigma$ and $\Sigma'$ over $\Sigma_0$ is a fan.
\end{lm}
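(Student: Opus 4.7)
The plan is to verify the three axioms of a fan for $\tSigma$: (a) every element is a strongly convex rational polyhedral cone; (b) every face of a cone in $\tSigma$ belongs to $\tSigma$; and (c) the intersection of any two cones in $\tSigma$ is a common face of each. Axiom (a) is automatic: for $\sigma\in\Sigma$ and $\sigma'\in\Sigma'$ both contained in some $\sigma_0\in\Sigma_0$, the intersection $\sigma\cap\sigma'$ is the intersection of two rational polyhedral cones and thus rational polyhedral, and it inherits strong convexity from $\sigma$.

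The core step is to establish the following polyhedral claim: for strongly convex rational polyhedral cones $\sigma,\sigma'\subseteq N_{\bR}$, the faces of $\sigma\cap\sigma'$ are precisely the intersections $\tau\cap\tau'$ where $\tau\leq\sigma$ and $\tau'\leq\sigma'$ are faces. For $(\Leftarrow)$, writing $\tau=\sigma\cap u_1^{\perp}$ and $\tau'=\sigma'\cap u_2^{\perp}$ with $u_1\in\sigma^{\vee}$, $u_2\in(\sigma')^{\vee}$, the functional $u_1+u_2$ lies in $(\sigma\cap\sigma')^{\vee}$ and cuts out $\tau\cap\tau'$ as a face of $\sigma\cap\sigma'$, using that $u_1,u_2\geq 0$ on $\sigma\cap\sigma'$ forces $(u_1+u_2)(x)=0$ to imply $u_1(x)=u_2(x)=0$. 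For $(\Rightarrow)$, I would invoke the duality $(\sigma\cap\sigma')^{\vee}=\sigma^{\vee}+(\sigma')^{\vee}$ for polyhedral cones to split a defining functional of a given face as $u=u_1+u_2$, and then apply the same nonnegativity argument in reverse.

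Granting this, axiom (b) is immediate: any face of $\sigma\cap\sigma'\in\tSigma$ decomposes as $\tau\cap\tau'$ with $\tau\in\Sigma$ and $\tau'\in\Sigma'$, both contained in the same $\sigma_0$, so it lies in $\tSigma$. For axiom (c), given $\sigma_1\cap\sigma_1'$ and $\sigma_2\cap\sigma_2'$ in $\tSigma$ with $\sigma_i,\sigma_i'\subseteq\sigma_0^{(i)}\in\Sigma_0$, I rewrite their intersection as
\[
(\sigma_1\cap\sigma_1')\cap(\sigma_2\cap\sigma_2')=(\sigma_1\cap\sigma_2)\cap(\sigma_1'\cap\sigma_2').
\]
Since $\Sigma$ and $\Sigma'$ are fans, $\sigma_1\cap\sigma_2$ is a common face of $\sigma_1,\sigma_2$ and $\sigma_1'\cap\sigma_2'$ is a common face of $\sigma_1',\sigma_2'$. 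The polyhedral claim then identifies the right-hand side as a common face of both $\sigma_i\cap\sigma_i'$; moreover it sits inside $\sigma_0^{(1)}\cap\sigma_0^{(2)}\in\Sigma_0$, hence belongs to $\tSigma$.

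The main potential obstacle is the duality identity $(\sigma\cap\sigma')^{\vee}=\sigma^{\vee}+(\sigma')^{\vee}$ that drives the $(\Rightarrow)$ half of the polyhedral claim; this is a standard consequence of separation for polyhedral cones, but strong convexity has to be invoked to guarantee that the required additive splitting exists. Once this step is secured, the three fan axioms follow by purely formal manipulations with the faces and the compatibility that all cones involved are pulled back to a common cone of $\Sigma_0$.
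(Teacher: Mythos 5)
Your proposal is correct and follows essentially the same route as the paper: both verify strong convexity and rational polyhedrality directly, use the duality $(\sigma\cap\sigma')^{\vee}=\sigma^{\vee}+(\sigma')^{\vee}$ to show that faces of $\sigma\cap\sigma'$ are exactly intersections $\tau\cap\tau'$ of faces, and then handle pairwise intersections via the identity $(\sigma_1\cap\sigma_1')\cap(\sigma_2\cap\sigma_2')=(\sigma_1\cap\sigma_2)\cap(\sigma_1'\cap\sigma_2')$. The only minor remark is that the duality identity holds for arbitrary polyhedral cones (the Minkowski sum of polyhedral cones is already closed), so strong convexity is not actually needed for the splitting step you flag as a potential obstacle.
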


\begin{proof}

To show that $\tSigma$ is a fan, first consider cones $\sigma\in\Sigma$ and $\sigma'\in\Sigma'$. Note that $\sigma\cap\sigma'$ is a rational polyhedral cone, since $\sigma$ and $\sigma'$ are the intersections of finitely many half-spaces in $N_\bR$. Additionally, $\sigma\cap\sigma'$ is strongly convex, since $\sigma$ is strongly convex. 

Next consider a face $\widetilde{\tau}$ of $\sigma\cap\sigma'$, say $\widetilde{\tau}=(\sigma\cap\sigma')\cap\Cone(m)^\perp$ for some $m\in(\sigma\cap\sigma')^\vee$. We can write $\widetilde{m}=m+m'$
under the decomposition
\begin{equation}\label{dual_cone_of_intersection}
(\sigma\cap\sigma')^\vee=(\sigma^{\vee\vee}\cap\sigma'^{\vee\vee})^\vee=(\sigma^\vee+\sigma'^\vee)^{\vee\vee}=\sigma^\vee+\sigma'^\vee.
\end{equation}
Then $\tau=\sigma\cap\Cone(z)^\perp$ and $\tau'=\sigma'\cap\Cone(z')^\perp$ are faces of $\sigma$ and $\sigma'$ respectively. Since $\gen{m,u},\gen{m',u}\geq 0$ for all $u\in\sigma\cap\sigma'$, the equation $\gen{\widetilde{m},u}=\gen{m,u}+\gen{m',u}$ implies that
\[\widetilde{\tau}=\tau\cap\tau'\in\widetilde{\Sigma}.\]
The same argument also implies that any face of $\sigma\cap\sigma'$ has the form $\tau\cap\tau'$, where $\tau$ and $\tau'$ are the faces of $\sigma$ and $\sigma'$ respectively. 

Finally, we need to show that the intersection of any two cones $\sigma_1\cap\sigma_1'$ and $\sigma_2\cap\sigma_2'$ of $\tSigma$ is a face of each, where $\sigma_i \in \Sigma$ and $\sigma'_i \in \Sigma'$. Since $\sigma_1\cap\sigma_2$ and $\sigma'_1\cap\sigma'_2$ are faces of $\sigma_i$ and $\sigma'_i$ respectively, we deduce that 
\[(\sigma_1\cap\sigma_1')\cap(\sigma_2\cap\sigma_2')=(\sigma_1\cap\sigma_2)\cap(\sigma_1'\cap\sigma_2')\]
is the face of $\sigma_i\cap\sigma_i'$ for $i=1$, $2$. So we conclude that $\widetilde{\Sigma}$ forms a fan in $N_\R$
\end{proof}

We note that $\tSigma$ is the fiber product of $\Sigma$ and $\Sigma'$ over $\Sigma_0$ in the category of fans, so it is an important ingredient to study the fiber product of toric morphisms. 


\subsection{Wall relations and Toric flips}\label{ToricFlips}

In this section, we recall some basic results about toric flips. For details, please refer to \cite[Chapter 15]{CLS}, \cite[Chapter 14]{Matsuki} or the paper \cite{Reid83}. 



Let $X_\Sigma$ be a simplicial semiprojective toric variety and let $\cR \subseteq \cNE(X_\Sigma)$ be an extremal ray of its Mori cone. Reid showed that there is an extremal contraction
\[
    \phi_\cR \colon X_\Sigma \to X_{\Sigma_0}
\]
such that $\Sigma$ refines $\Sigma_0$ and $X_{\Sigma_0}$ is semiprojective. 

The idea of the construction for $\phi_\cR$ is briefly stated as follows. The extremal ray $\cR$  is generated by the curve class of an orbit closure $V (\tau)$ of an $(n-1)$-dimensional cone $\tau \in \Sigma$ and $\tau$ is called a \emph{wall}. Roughly speaking, $\Sigma_0$ 
is obtained from the fan $\Sigma$ by "removing" all walls $\tau \in \Sigma$ with $[V (\tau)] \in \cR$.

To see the local picture, we pick a wall $\tau=\Cone(u_1,\ldots,u_{n-1})$ with $[V(\tau)]\in\cR$. Since $\Sigma$ is simplicial, the wall $\tau$ separates two $n$-dimensional cones
\begin{equation*}
    \begin{aligned}
        \sigma_{n+1} &= \Cone (u_1, \ldots, u_n),\\
        \sigma_n &= \Cone (u_1, \ldots, u_{n - 1}, u_{n + 1})
    \end{aligned}
\end{equation*}
in $\Sigma$. The primitive vectors $u_1, \ldots, u_{n + 1}$ of $\rho_1,\ldots,\rho_{n+1}$ in $\Sigma(1)$ span $N_\bR \simeq \bR^n$ and there is a nontrivial linear relation
\begin{equation} \label{wallrel_eqn}
    \sum_{i = 1}^{n + 1} b_i u_i = 0
\end{equation}
over $\bQ$, which is unique up to multiplication by a nonzero rational number and is called a \emph{wall relation}. Here, we may assume $b_{n+1}>0$. If $D_i = V(\rho_i)$ for $i=1,\ldots, n+1$, then we have that
\[D_i.V(\tau)=\dfrac{b_i}{b_{n+1}}D_{n+1}.V(\tau)\]
from the wall relation. For $\rho\in\Sigma(1)$ with its primitive vector $u_\rho\notin\{u_1,\ldots,u_{n+1}\}$, $D_\rho.\cR=0$ since $\rho$ and $\tau$ can not form a cone in $\Sigma$. We conclude  that the sets
\begin{equation}\label{int_R_nonzero}
    \begin{aligned}
        \{u_i\mid b_i<0\} & =\{u_\rho\mid \rho\in \Sigma(1), D_\rho.\cR<0\},\\
        \{u_i\mid b_i>0\} & =\{u_\rho\mid \rho\in \Sigma(1), D_\rho.\cR>0\}
    \end{aligned}
\end{equation}
are independent of the choice of the wall $\tau$ with $[V(\tau)]\in\cR$.

Furthermore, since $X_\Sigma$ is simplicial, we have the following exact sequence
\[0\to M\to \Z^{\Sigma(1)}\to \operatorname{Cl}(X_\Sigma)\to 0,\]
defined by $m\mapsto(\gen{m,u_\rho})_\rho$ and $e_\rho\mapsto D_\rho$. Its dual sequence is
\[0\to N_1(X_\Sigma)_\R\to\R^{\Sigma(1)}\to N_\R\to 0,\]
so we can identify $N_1(X_\Sigma)_\R$ with linear relations of primitive vectors in $\Sigma(1)$ and thus the wall relation \eqref{wallrel_eqn} is uniquely determined by the extremal ray $\cR$ up to multiplication by a nonzero rational number.

In \cite{Reid83}, Reid had shown that $\Sigma_0$ is a nonsimplicial fan if and only if $\phi_\cR$ is a small birational contraction. In this case, $|\{\rho\in\Sigma(1)\mid D_\rho.\cR<0\}|>1$ and 
\[\sigma_0\coloneqq \Cone(u_1,\ldots,u_n,u_{n+1})\]
is a non-simplicial $n$-dimensional cone in $\Sigma_0$. Also, there is a commutative diagram of birational toric morphisms
\begin{equation}\label{tflip_eqn}
\begin{tikzcd}
X_{\Sigma} \ar[rd,"\phi_\cR"'] & & X_{\Sigma'} \ar[ld,"\phi'"] \\
& X_{\Sigma_0} &
\end{tikzcd}    
\end{equation}
where $\Sigma'$ is given by another subdivision of $\Sigma_0$. The birational map 
\begin{equation}\label{tflipf_eqn}
   f \coloneqq (\phi')^{- 1} \circ \phi_\cR \colon X_{\Sigma} \dashrightarrow X_{\Sigma'}
\end{equation}
is called the \emph{toric flip} of $\phi_\cR$, and is called the \emph{toric flop} of $\phi_\cR$ if $K_{X_\Sigma}.\cR=0$.

To say more about the fan $\Sigma'$, for the wall $\tau$  as above,  we get the affine toric subvariety $U_{\sigma_0} \subseteq X_{\Sigma_0}$. From the wall relation \eqref{wallrel_eqn}, we define the sets
\begin{equation*}
    J_{-} = \{i \mid b_i < 0\} \mbox{, } J_{0} = \{i \mid b_i = 0\} \mbox{, } J_{+} = \{i \mid b_i > 0\}
\end{equation*}
and the cones 
\begin{equation*}
    \sigma_J = \Cone (u_i \mid i \in J) \mbox{ for } J \subseteq \{1, \ldots, n + 1\}.
\end{equation*}
Via two subdivisions of $\sigma_0$, $\Sigma$ contains the cones $\{\sigma_J \mid J_{+} \not\subseteq J\}$ and $\Sigma'$ contains the cones $\{\sigma_J \mid J_{-} \not\subseteq J\}$. 
Also, the exceptional loci of $\phi_\cR$ and $\phi'$ over $U_{\sigma_0}$ are $V(\sigma_{J_{-}})$ and $V(\sigma_{J_{+}})$ which map onto $V(\sigma_{J_{-} \cup J_{+}})$, $\codim V(\sigma_{J_{\pm}}) = |J_{\pm}| \geq 2$ and $\dim V(\sigma_{J_{-} \cup J_{+}}) = |J_{0}|$.

As mentioned in \eqref{int_R_nonzero}, $\{u_i\mid i \in J_-\cup J_+\}$ is the set of primitive vectors $u_\rho$ of $\rho$ in $\Sigma(1)$ such that $D_\rho.\cR\neq 0$. Therefore, if $\sigma_{\operatorname{exc}}\coloneqq\Cone(u_\rho\mid \cR.D_\rho\neq 0)\in\Sigma_0$, then every $\sigma_0\in\Sigma_0(n)\setminus\Sigma(n)$ comes from
\[\operatorname{Star}(\sigma_{\operatorname{exc}})\coloneqq\{\sigma_0 \in \Sigma_0(n)\mid\sigma_{\operatorname{exc}}\prec\sigma_0\},\]
which is obtained by ``removing" walls. Hence,
\[\Exc(\phi_\cR)=V(\Cone(u_\rho\mid D_\rho.\cR<0))\quad \text{and} \quad \phi_\cR(\Exc(\phi_\cR))=V(\sigma_{\operatorname{exc}}).\]
In the subsequent sections, many problems can be checked locally, so we may fix $n+1$ vectors $u_1,\ldots,u_{n+1}$ from a wall $\tau$ together with the wall relation \eqref{wallrel_eqn}.

The terminology used for toric flips and toric flops coincides with the usual one in the minimal model program.

\begin{definition}\label{flip_MMP}
Let $(X,\Delta)$ be a log canonical pair. A projective morphism $\phi \colon X\to Z$ between normal varieties is a \emph{$(K_X+\Delta)$-flipping contraction} if
\begin{enumerate}[(1)]
    \item $X$ is $\Q$-factorial and $\Delta$ is an $\R$-divisor,
    \item $\phi$ is a small birational morphism of relative Picard number 1,
    \item $-(K_X+\Delta)$ is $\phi$-ample.
\end{enumerate}
A $(K_X+\Delta)$-flipping contraction is a \emph{flopping contraction} if $K_X$ is numerically relatively trivial.
\end{definition}


\begin{remark}\label{rmk_toricflip}
Recall that, following \cite{Reid83}, a toric flip in this paper means a wall relation \eqref{wallrel_eqn} with $|J_-|>1$. Equivalently, it corresponds to the small birational contraction $\phi_{\cR}\colon X_{\Sigma}\to X_{\Sigma_0}$ associated with an extremal ray $\cR$ and the diagram \eqref{tflip_eqn}. 

Moreover, by \cite[Theorem 14-3-3]{Matsuki} (see also \cite[p.~781]{CLS}), there exists a torus invariant boundary $\Q$-divisor $\Delta$ such that $(X_\Sigma,\Delta)$ is klt and $(K_{X_\Sigma}+\Delta)\cdot \cR<0$. Hence $\phi_\cR$ is a $(K_{X_\Sigma}+\Delta)$-flipping contraction in the sense of Definition \ref{flip_MMP}, and it is a flopping contraction if and only if it gives a toric flop in our sense. Notably, this formulation does not require the condition $K_{X_\Sigma} \cdot \cR < 0$ typically found in the classical Minimal Model Program. Indeed, the sign of $K_{X_\Sigma} \cdot \cR$ has the same sign as $-\sum_{i=1}^{n+1} b_i$. 
\end{remark}












\section{Graph Closures}\label{gf_sec}


In this section, we will investigate the graph closure $\Graphc$ of a toric birational map $f$ which is endowed with its reduced subscheme structure.



\begin{thm}\label{gfbir_prop}
Let $\Sigma$ and $\Sigma'$ be two refinements of a fan $\Sigma_0$ and let $f \colon X_{\Sigma} \dashrightarrow X_{\Sigma'}$ be the birational map induced by $X_{\Sigma} \rightarrow X_{\Sigma_0} \leftarrow X_{\Sigma'}$. Then we have:
\begin{enumerate}
    \item\label{gfbir_prop1} The graph closure $\Graphc$ is a (not necessarily normal) toric variety.

    \item\label{gfbir_prop2} The normalization of $\Graphc$ is the toric variety $X_{\tSigma}$ defined by the fan \eqref{fp_fan}.
\end{enumerate}
\end{thm}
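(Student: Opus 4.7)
The plan is to identify $\Graphc$ with the Zariski closure of the diagonally embedded common torus $T_N$ inside $X_\Sigma \times X_{\Sigma'}$, read off its local affine structure chart by chart, and then saturate to compute the normalization. Write $\pi\colon X_\Sigma \to X_{\Sigma_0}$ and $\pi'\colon X_{\Sigma'} \to X_{\Sigma_0}$ for the toric morphisms from the two refinements. Since both restrict to the identity on $T_N$, the birational map $f$ is the identity on $T_N$ as well, so $\Graphc = \overline{\Delta(T_N)}$ where $\Delta(t) \coloneqq (t,t)$. The diagonal $T_N$-action on $X_\Sigma \times X_{\Sigma'}$ preserves $\Graphc$ (the map $f$ being $T_N$-equivariant on its domain of definition), and $\Delta(T_N) = \Graphc \cap (T_N \times T_N)$ is open, dense in $\Graphc$, and canonically isomorphic to $T_N$. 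This directly endows $\Graphc$ with the structure of a (possibly non-normal) toric variety in the sense of Section \ref{pre_sec}, proving (1).

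For the affine-local description, I first note that $\Graphc$ is covered by the pieces $\Graphc \cap (U_\sigma \times U_{\sigma'})$ with $\sigma \in \Sigma$ and $\sigma' \in \Sigma'$ jointly contained in some $\sigma_0 \in \Sigma_0$. Indeed, any $(x, x') \in \Graphc \subseteq X_\Sigma \times_{X_{\Sigma_0}} X_{\Sigma'}$ satisfies $\pi(x) = \pi'(x')$, which lies in some $U_{\sigma_0}$; applying $\pi^{-1}(U_{\sigma_0}) = \bigcup_{\tau \subseteq \sigma_0,\, \tau \in \Sigma} U_\tau$ (and likewise for $\pi'$) places $x, x'$ into charts with $\sigma, \sigma' \subseteq \sigma_0$. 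Because $\Delta(T_N) \subseteq U_\sigma \times U_{\sigma'}$, the coordinate ring of its closure in $U_\sigma \times U_{\sigma'}$ equals the image of $\bC[\sfS_\sigma] \otimes \bC[\sfS_{\sigma'}] \cong \bC[\sfS_\sigma \oplus \sfS_{\sigma'}]$ under the map $(m_1, m_2) \mapsto m_1 + m_2 \in M$, namely the semigroup algebra $\bC[\sfS_\sigma + \sfS_{\sigma'}]$. Hence
\[
    \Graphc \cap (U_\sigma \times U_{\sigma'}) = \Spec \bC[\sfS_\sigma + \sfS_{\sigma'}],
\]
an (in general non-normal) affine toric variety.

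For (2), by \autoref{normalization} the normalization corresponds to the saturation of $\sfS_\sigma + \sfS_{\sigma'}$ in $M$. The inclusion $\sfS_\sigma + \sfS_{\sigma'} \subseteq \sfS_{\sigma \cap \sigma'}$ follows from $\sigma^\vee + \sigma'^\vee \subseteq (\sigma \cap \sigma')^\vee$. For the reverse inclusion at the saturation level, the standard duality $(\sigma \cap \sigma')^\vee = \sigma^\vee + \sigma'^\vee$ (as in \eqref{dual_cone_of_intersection}) lets me write any $m \in \sfS_{\sigma \cap \sigma'}$ as $m = v + v'$ with rational $v \in \sigma^\vee$ and $v' \in \sigma'^\vee$; clearing denominators produces $k > 0$ with $km = kv + kv' \in \sfS_\sigma + \sfS_{\sigma'}$. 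Thus the saturation equals $\sfS_{\sigma \cap \sigma'}$, and the local normalization is $U_{\sigma \cap \sigma'}$. Since $\sigma, \sigma' \subseteq \sigma_0$ by assumption, $\sigma \cap \sigma' \in \tSigma$ by definition; compatibility of these identifications under face relations $\tau \preceq \sigma$, $\tau' \preceq \sigma'$ shows that the local normalizations glue into the toric variety $X_{\tSigma}$, producing the normalization map $X_{\tSigma} \to \Graphc$.

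The chief technical step is the saturation identity: the dual-cone equality is classical, but realizing a given $m \in (\sigma^\vee + \sigma'^\vee) \cap M$ as an integer multiple of an honest lattice sum from $\sfS_\sigma$ and $\sfS_{\sigma'}$ requires the rational decomposition and simultaneous denominator clearing. Once this is in hand, the gluing of the affine normalizations is routine: the face poset of $\tSigma$ matches that of the covering pairs $(\sigma, \sigma')$, so the face inclusions among the $U_\sigma \times U_{\sigma'}$ correspond precisely to those of the $U_{\sigma \cap \sigma'}$ inside $X_{\tSigma}$.
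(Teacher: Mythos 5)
Your proposal is correct, but it reaches the local model by a genuinely different route than the paper. You compute the scheme-theoretic image of the diagonal $T_N \to U_\sigma \times U_{\sigma'}$ directly: the closure of $\Delta(T_N)$ in each chart is $\Spec$ of the image of the comorphism $\bC[\sfS_\sigma]\otimes\bC[\sfS_{\sigma'}]\to\bC[M]$, i.e.\ the semigroup algebra $\bC[\sfS_\sigma+\sfS_{\sigma'}]$, with part (1) falling out of torus-invariance of the closure of the diagonal and part (2) from saturating via $(\sigma\cap\sigma')^\vee=\sigma^\vee+(\sigma')^\vee$ and Fact~\ref{normalization}. The paper instead works \emph{inside the fiber product} chart $U_\sigma\times_{U_{\sigma_0}}U_{\sigma'}=\Spec\bigl(\bC[\eZ^{\sB}]/(I_\sA+I_{\sA'})\bigr)$ and identifies $\Graphc\cap\Spec A$ with $\bfV(I_\sB)$: the substantive step there is showing that each binomial generator of $I_\sB$, after multiplication by a monomial supplied by Lemma~\ref{easy_lem}, lands in $I_\sA+I_{\sA'}$, followed by a dimension count. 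Both arguments terminate at the same affine model $\Spec\bC[\sfS_\sigma+\sfS_{\sigma'}]$ and the same saturation computation, so your proof is complete and arguably cleaner for the statement as written; what it does not deliver is the paper's explicit comparison between the prime ideal cutting out $\Graphc$ and the ideal $I_\sA+I_{\sA'}$ defining the fiber product, which is precisely the extra information the paper reuses later (the identity $\sqrt{I_\sA+I_{\sA'}}=I_\sB$ in Theorem~\ref{fpred_thm}, Remark~\ref{gfbir_rmk}, and Lemma~\ref{split}). Two small points you leave implicit but which are genuinely routine: the decomposition $m=v+v'$ in the saturation step can be taken rational because the fiber of the addition map over $m$ is a nonempty rational polyhedron, and the chart-by-chart normalizations glue automatically because normalization is local and all transition maps are inclusions of semigroup algebras inside $\bC[M]$.
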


In particular, the normalization of $\Graphc$ is the fiber product in the category of normal toric varieties for such a birational map $f$.


\begin{proof}
The questions are local, so we fix a cone $\sigma \cap \sigma' \in \tSigma$ and the open set $U_{\sigma \cap \sigma'}$. By \eqref{fp_fan}, there is a cone $\sigma_0 \in \Sigma_0$ such that $\sigma$ and $\sigma'$ are contained in $\sigma_0$.  Let $\sA_0 \subseteq M$ be a generating set of $\sfS_{\sigma_0}$, and similarly for $\sA$ and $\sA'$. Since $\sigma^\vee \cap (\sigma')^\vee$ contains $\sigma_0^\vee$, we may assume that $\sA \cap \sA' \supseteq \sA_0$. 

First, we deal with the simple case that $\sA\cap\sA'=\sA_0$. Set $\sB = \sA \cup \sA'$. According to $\sA \cap \sA' \supseteq \sA_0$, it follows that the fiber product $U_{\sigma} \times_{U_{\sigma_0}} U_{\sigma'}$ is the spectrum of the ring
\begin{equation}\label{gfbir_eqn}
    A \coloneqq \bC [\eZ^{\sA}] / I_\sA \otimes_{\bC [\eZ^{\sA_0}] / I_{\sA'}} \bC [\eZ^{\sA'}] / I_{\sA'} \simeq \bC [ \eZ^{\sB}] / I_{\sA} + I_{\sA'},
\end{equation} 
and $T_N \times_{T_N} T_N = \Spec A_a$ where $a \coloneqq \prod_{m \in \sA_0} x_m$. Denote by $\iota \colon A \to A_a$ the canonical ring homomorphism. Since $T_N \times_{T_N} T_N \simeq T_N$ is integral, the zero ideal $0_{A_a}$ is a radical ideal. Then the graph closure $\Graphc = \overline{T_N \times_{T_N} T_N}$ in $\Spec A$ is defined by the ideal
\[
    \bigcap_{a \notin \fp \in \Spec A} \fp = \iota^{-1} (\sqrt{0_{A_a}}) = \iota^{-1} (0_{A_a}).
\]

Note that the toric (prime) ideal $I_\sB$ contains $I_\sA + I_{\sA'}$. 
Then \eqref{gfbir_prop1} follows from the claim that $\Graphc \cap \Spec A = \bfV (I_\sB)$ in $\Spec A$. Indeed, using \eqref{tid_eqn} and the identification
\begin{equation}\label{gftid_eqn0}
    \bZ^{\sB} = \bZ^{\sA_0} \oplus \bZ^{\sA \setminus \sA_0} \oplus \bZ^{\sA' \setminus \sA_0}, 
\end{equation}
we pick
\begin{equation}\label{gftid_eqn1}
    (\alpha_0 + \alpha + \alpha') - (\beta_0 + \beta + \beta') \in L_\sB
\end{equation}
where $\alpha_0, \beta_0 \in \eZ^{\sA_0}$, $\alpha, \beta \in \eZ^{\sA \setminus \sA_0}$ and $\alpha', \beta' \in \eZ^{\sA' \setminus \sA_0}$. By Lemma \ref{easy_lem}, there is an element $\gamma_0 \in \eZ^{\sA_0}$ such that $m_{\gamma_0} + m_\alpha$ and $m_{\gamma_0} + m_\beta$ belong to  $\sfS_{\sigma_0}$, 
so we can find $\alpha_1, \beta_1 \in \eZ^{\sA_0}$ such that
\begin{equation}\label{gftid_eqn2}
    (\gamma_0 + \alpha) - \alpha_1,\ (\gamma_0 + \beta) - \beta_1 \in L_{\sA_0}. 
\end{equation}
By \eqref{gftid_eqn1} and \eqref{gftid_eqn2}, we get 
\begin{equation*}
    (\alpha_0 + \alpha_1 + \alpha') - (\beta_0 + \beta_1 + \beta') \in L_{\sA'}
\end{equation*}
and thus the binomial
\begin{align*}
    x^{\gamma_0} (x^{\alpha_0 + \alpha + \alpha'} &- x^{\beta_0 + \beta + \beta'}) = \\ &\left[ x^{\alpha_0 + \alpha'} (x^{\gamma_0 + \alpha} - x^{\alpha_1}) - x^{\beta_0 + \beta'} (x^{\gamma_0 + \beta} - x^{\beta_1}) \right] + 
    \left[ x^{\alpha_0 + \alpha_1 + \alpha'} - x^{\beta_0 + \beta_1 + \beta'} \right]
\end{align*}
belongs to $I_\sA + I_{\sA'}$. Then the prime ideal $I_\sB / (I_\sA + I_{\sA'})$ defining $\bfV (I_\sB)$ is contained in the prime ideal $\iota^{-1}(0_{A_a})$ defining $\Graphc \cap \Spec A$. Therefore the claim follows from the dimension equality
\[
    \dim \Graphc = \rk N = \rk M = \dim \bfV (I_\sB).
\]
According to 
\[
    \Cone (\sB)^\vee = (\sigma^\vee + (\sigma')^\vee )^\vee = \sigma \cap \sigma'
\]
and Fact \ref{normalization}, it follows that $U_{\sigma \cap \sigma'}$ is the normalization of
\[
   \bfV (I_\sB) \simeq \Spec (\bC [\eZ^{\sB}] / I_\sB),
\]
which proves \eqref{gfbir_prop2}.

In general, if $\sA\cap\sA'\supsetneq\sA_0$, we can modify the above proof as follows: The fiber product $U_{\sigma}\times_{U_{\sigma_0}}U_{\sigma'}$ is the spectrum of the ring
\begin{equation}\label{gfbir_eqn2}
    A = \bC [\eZ^{\sA}] / I_\sA \otimes_{\bC [\eZ^{\sA_0}] / I_{\sA'}} \bC [\eZ^{\sA'}] / I_{\sA'} \simeq \bC [\sfSp] / I_{\sA} + I_{\sA'},
\end{equation}
where $\sfSp$ is the semigroup \eqref{splitsgp}. Let $L$ be the kernel of the map of lattices
\[
    \begin{matrix}
    \sfSp & \longrightarrow & M \\
    (\alpha_0,\alpha,\alpha') & \longmapsto & m_{\alpha_0}+m_\alpha+m_{\alpha'}
    \end{matrix},
\]
and let $I$ be the prime ideal of $\bC[\sfSp]$ 
defined by
\begin{equation}\label{gfbir_ideal}
    I = \gen{x^\alpha-x^\beta \mid \alpha, \beta \in \sfSp \mbox{ and } \alpha-\beta\in L}.
\end{equation}
Then we claim that $\overline{\Gamma}_f\cap\Spec A=V(I)$, and the same proof works for the general case.
\end{proof}

The following theorem gives us a combinatorial criterion for the normality of graph closures.

\begin{theorem}\label{gfcri_thm}
Let $f$ be as in Theorem \ref{gfbir_prop}. Then the following statements are equivalent:
\begin{enumerate}[(i)]
    \item\label{gfcri_thm1} $\overline{\Gamma}_f$ is normal.

    \item\label{gfcri_thm2} $\overline{\Gamma}_f$ is the toric variety of the fan $\tSigma$ defined in \eqref{fp_fan}.
    
    \item\label{gfcri_thm3} For all (maximal cone) $\sigma$ and $\sigma'$ contained in some cone of $\Sigma_0$,
    \begin{equation}\label{cone+cone=cone}
        \sfS_{\sigma} + \sfS_{\sigma'} = \sfS_{\sigma \cap \sigma'}.
    \end{equation}
\end{enumerate}
\end{theorem}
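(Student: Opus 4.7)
The plan is to deduce (i)$\Leftrightarrow$(ii) directly from Theorem \ref{gfbir_prop}, and then to obtain (ii)$\Leftrightarrow$(iii) by descending to the local toric description established in that theorem's proof.

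For (i)$\Leftrightarrow$(ii): Theorem \ref{gfbir_prop} part (\ref{gfbir_prop2}) exhibits $X_{\tSigma}$ as the normalization of $\overline{\Gamma}_f$. Since normalization is a finite birational morphism of reduced schemes, the canonical map $X_{\tSigma}\to\overline{\Gamma}_f$ is an isomorphism precisely when $\overline{\Gamma}_f$ is already normal, which yields both implications at once.

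For (ii)$\Leftrightarrow$(iii), I would work locally over each $\sigma_0\in\Sigma_0$. Both $\overline{\Gamma}_f$ and $X_{\tSigma}$ are glued from affine charts indexed by pairs $(\sigma,\sigma')$ with $\sigma\in\Sigma$, $\sigma'\in\Sigma'$, and $\sigma,\sigma'\subseteq\sigma_0$. From the proof of Theorem \ref{gfbir_prop} the local piece of $\overline{\Gamma}_f$ is $\bfV(I)\subseteq\Spec\bC[\sfSp]$, where $I$ is the binomial ideal \eqref{gfbir_ideal}. I would identify the quotient $\bC[\sfSp]/I$ with the semigroup algebra of the image of $\sfSp$ under the map $(\alpha_0,\alpha,\alpha')\mapsto m_{\alpha_0}+m_\alpha+m_{\alpha'}$; that image is precisely $\sfS_\sigma + \sfS_{\sigma'}$. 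The matching chart of $X_{\tSigma}$ is $U_{\sigma\cap\sigma'}=\Spec\bC[\sfS_{\sigma\cap\sigma'}]$, using the identity $(\sigma\cap\sigma')^\vee = \sigma^\vee+(\sigma')^\vee$ already recorded inside the proof of Lemma \ref{ccr_is_fan}. Since the inclusion $\sfS_\sigma + \sfS_{\sigma'}\subseteq\sfS_{\sigma\cap\sigma'}$ always holds, with the right-hand side being the saturation of the left by Fact \ref{normalization}, the two charts coincide exactly when \eqref{cone+cone=cone} holds. To pass from maximal pairs to arbitrary face pairs, I would note that for $\tau\prec\sigma$ and $\tau'\prec\sigma'$ the chart for $(\tau,\tau')$ is a localization of the chart for $(\sigma,\sigma')$ on both $\overline{\Gamma}_f$ and $X_{\tSigma}$, so verifying \eqref{cone+cone=cone} on maximal pairs is enough to conclude equality of the whole varieties.

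The main obstacle will be the bookkeeping in identifying $\bC[\sfSp]/I$ with the semigroup algebra $\bC[\sfS_\sigma+\sfS_{\sigma'}]$: one must verify that $I$ is exactly the kernel of the surjection $\bC[\sfSp]\twoheadrightarrow\bC[\sfS_\sigma+\sfS_{\sigma'}]$ induced by the above map on exponents. This relies on the specific form \eqref{gfbir_ideal} of $I$ and the split decomposition \eqref{splitsgp} of $\sfSp$, and is a standard toric-ideal computation. Beyond this identification, the remainder of the argument is essentially formal manipulation of toric data.
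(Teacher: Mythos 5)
Your argument is correct, and for the equivalence (ii)$\Leftrightarrow$(iii) it takes a route that differs in presentation from the paper's. The paper introduces the natural morphism $\psi\colon X_{\tSigma}\to X_{\Sigma}\times X_{\Sigma'}$, observes that locally $\psi$ is given by the multiplication map $\bC[\sfS_{\sigma}]\otimes_\bC\bC[\sfS_{\sigma'}]\to\bC[\sfS_{\sigma\cap\sigma'}]$ of \eqref{gfcri_eqn}, so that (iii) is exactly the condition that $\psi$ be a closed immersion, and then uses properness and birationality to identify the image of $\psi$ with $\Graphc$, whence a closed immersion forces $X_{\tSigma}\xrightarrow{\sim}\Graphc$. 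You instead stay inside the fiber product and compare charts directly: from the proof of Theorem \ref{gfbir_prop} the chart of $\Graphc$ over a pair $(\sigma,\sigma')$ is $\Spec(\bC[\sfSp]/I)\cong\Spec\bC[\sfS_\sigma+\sfS_{\sigma'}]$, the matching chart of $X_{\tSigma}$ is $\Spec\bC[\sfS_{\sigma\cap\sigma'}]$, and the normalization map is induced by the inclusion of $\sfS_\sigma+\sfS_{\sigma'}$ into its saturation $\sfS_{\sigma\cap\sigma'}$, so it is an isomorphism exactly when \eqref{cone+cone=cone} holds. The two arguments hinge on the same computation --- surjectivity of \eqref{gfcri_eqn} is literally the equality $\sfS_\sigma+\sfS_{\sigma'}=\sfS_{\sigma\cap\sigma'}$ --- but yours makes the role of Fact \ref{normalization} explicit and avoids the detour through the absolute product $X_\Sigma\times X_{\Sigma'}$ and the properness argument, at the cost of having to justify the identification $\bC[\sfSp]/I\cong\bC[\sfS_\sigma+\sfS_{\sigma'}]$ (which is \eqref{tid_eqn} applied to the generating set $\sA\cup\sA'$) and the reduction from arbitrary pairs of cones to maximal ones; your localization remark handles the latter, and the paper is no more explicit on that point. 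The treatment of (i)$\Leftrightarrow$(ii) is identical in both.
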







\begin{proof}
The equivalence $\eqref{gfcri_thm1} \Leftrightarrow \eqref{gfcri_thm2}$ follows from Theorem \ref{gfbir_prop}. For $\eqref{gfcri_thm2} \Leftrightarrow \eqref{gfcri_thm3}$, we claim that \eqref{gfcri_thm3} is equivalent to saying that the natural morphism $X_{\tSigma} \to X_{\Sigma} \times X_{\Sigma'}$, called $\psi$, is a closed immersion. Indeed, the morphism $\psi$ is defined locally by
\begin{equation}\label{gfcri_eqn}
\begin{matrix}
\bC[\sfS_{\sigma}] \otimes_\bC \bC[\sfS_{\sigma'}] & \longrightarrow & \bC[\sfS_{\sigma \cap \sigma'}] \\
\chi^m \otimes \chi^{m'} & \longmapsto & \chi^{m + m'},
\end{matrix}
\end{equation}
and it is a closed immersion if and only if \eqref{gfcri_eqn} is surjective.

Since $\psi$ is proper, we have the birational morphism
\[
    \psi \colon X_{\tSigma} = \overline{T_N} \longrightarrow \overline{T_N \times_{T_N} T_N} = \Graphc
\]
and thus $\Graphc$ is the image of $\psi$. Therefore the closed immersion $\psi \colon X_{\tSigma} \to X_{\Sigma} \times X_{\Sigma'}$ gives the isomorphism $X_{\tSigma} \xrightarrow{\sim} \Graphc$. 
\end{proof}

As an application of Theorem \ref{gfcri_thm}, we are going to prove that $\Graphc$ is a normal toric variety for certain toric flips $f$. 

Let $f \colon X_\Sigma \dashrightarrow X_{\Sigma'}$ be a toric flip as in \eqref{tflipf_eqn}. Using the notation in Section \ref{ToricFlips}, we assume for simplicity that $X_{\Sigma_0} = U_{\sigma_0}$.

\begin{corollary}\label{gftoric_cor}
Let $f \colon X_\Sigma \dashrightarrow X_{\Sigma'}$ and $X_{\Sigma_0}$ be as above. Then $\Graphc$ is normal 
if $X_\Sigma$ satisfies the one of the following conditions.
\begin{enumerate}
    \item\label{gftoric_cor3D} Assume that $|J_+|=2$, say $J_{-}\cup J_+ = \{1, 2,\ldots,n-1\}$ and $J_{+} = \{n, n+1\}$. The cone $\sigma_{\{1,\ldots,n\}} \subseteq N_\bR \simeq \bR^n$ is smooth.

    \item\label{gftoric_cornDsm} The toric variety $X_{\Sigma}$ is smooth.
\end{enumerate}
\end{corollary}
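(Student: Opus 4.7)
The plan is to apply the combinatorial criterion (iii) of Theorem~\ref{gfcri_thm}. Working locally over a non-simplicial $\sigma_0 \in \Sigma_0$, the maximal cones of $\Sigma$ and $\Sigma'$ refining $\sigma_0$ are $\{\sigma_j : j \in J_+\}$ and $\{\sigma_j : j \in J_-\}$ respectively, and the inclusion $\sfS_\sigma + \sfS_{\sigma'} \subseteq \sfS_{\sigma \cap \sigma'}$ is automatic from $(\sigma \cap \sigma')^\vee = \sigma^\vee + (\sigma')^\vee$. The task is therefore, for each pair $(\sigma_j, \sigma_i')$ with $j \in J_+$ and $i \in J_-$, to decompose every $m \in \sfS_{\sigma_j \cap \sigma_i'}$ as $m = m^1 + m^2$ with $m^1 \in \sfS_{\sigma_j}$ and $m^2 \in \sfS_{\sigma_i'}$.

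For part~(\ref{gftoric_cor3D}), smoothness of $\sigma_4 = \sigma_{J_-\cup\{3\}}$ lets us identify $(u_1, u_2, u_3)$ with the standard basis of $N$; after normalizing so that $\gcd(b_1, \ldots, b_4) = 1$ and $b_4 = 1$, the wall relation gives $u_4 = |b_1| e_1 + |b_2| e_2 - b_3 e_3$. In these coordinates each of $\sigma^\vee$, $(\sigma')^\vee$ and $(\sigma \cap \sigma')^\vee$ is cut out by an explicit short list of linear inequalities, and I would check the four pairs $(\sigma_j, \sigma_i')$ with $j \in \{3,4\}$ and $i \in \{1,2\}$ by splitting on the signs of those coordinates of $m$ that $(\sigma \cap \sigma')^\vee$ does not force to be nonnegative. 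When the signs are favourable, $m$ already lies in $\sfS_\sigma$ or $\sfS_{\sigma'}$ and the trivial decomposition works; otherwise I would set most coordinates of $m^1, m^2$ by inspection and define the remaining one as $\lceil c/|b_i|\rceil$ for a suitable integer $c$. The decisive point is that the single nontrivial inequality cutting out $(\sigma \cap \sigma')^\vee$, together with integrality of the coordinates of $m$, forces this ceiling to be at most the corresponding coordinate of $m$, which supplies the decomposition.

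For part~(\ref{gftoric_cornDsm}), smoothness of $X_\Sigma$ forces each $\sigma_j$ with $j \in J_+$ to be spanned by a $\bZ$-basis of $N$, which is equivalent to the divisibility condition $b_j \mid b_i$ for every $i$. In the flip/flop setting $|J_+| \geq 2$, so comparing two distinct indices in $J_+$ forces $|b_j|$ to be equal for all $j \in J_+$, and after rescaling I may assume $b_j = 1$ for $j \in J_+$. The semigroup $\sfS_{\sigma_j}$ is then freely generated by the dual basis of $\{u_k : k \neq j\}$, and the same coordinate-based strategy as in part~(\ref{gftoric_cor3D}) applies pairwise: for each $(\sigma_j, \sigma_i')$ with $j \in J_+$ and $i \in J_-$, I would write out the defining inequalities, split $m$ by the sign of its component along the non-basis direction $u_j$, and produce the decomposition via a ceiling computation of the same flavour as before.

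The main obstacle is the bookkeeping: in each pair one must identify the correct ``ceiling variable'' and verify that the resulting integer inequality is precisely the one provided by membership of $m$ in $\sfS_{\sigma \cap \sigma'}$. Because the same template works for every pair and the symmetry of permuting indices within $J_+$ and within $J_-$ reduces the case analysis to a single representative pair, once the crucial ceiling-type step has been isolated the remainder of the verification is routine.
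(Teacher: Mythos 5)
Your overall strategy is the same as the paper's -- reduce to criterion (iii) of Theorem~\ref{gfcri_thm} and verify $\sfS_{\sigma_j}+\sfS_{\sigma_i}=\sfS_{\sigma_j\cap\sigma_i}$ pair by pair for $j\in J_+$, $i\in J_-$ -- but the execution of the combinatorial check differs. For part~(\ref{gftoric_cor3D}) the paper avoids any ceiling computation: for $j=4$ it splits $(\sigma\cap\sigma')^\vee$ off as a $2$-dimensional cone plus the ray $\bR_{\ge0}u_3^\vee$ and observes that the $2$-dimensional piece is \emph{covered} by $\sigma^\vee\cup(\sigma')^\vee$, so every lattice point already lies in one of the two semigroups; for $j=3$ it isolates $\overline{(\sigma\cap\sigma')^\vee\setminus(\sigma^\vee\cup(\sigma')^\vee)}$ and handles only that region. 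Your coordinate decomposition with $q=\lceil c/\lvert b_2\rvert\rceil$ is a valid alternative: the inequality $\lvert b_1\rvert m^{(1)}+\lvert b_2\rvert m^{(2)}\ge 0$ defining $(\sigma\cap\sigma')^\vee$ together with integrality of $m^{(2)}$ does force $\lceil\lvert b_1\rvert\lvert m^{(1)}\rvert/\lvert b_2\rvert\rceil\le m^{(2)}$, which is exactly the bound you need, and the leftover coordinate inequalities (e.g.\ $-b_3m^{(3)}\ge0$ when $m^{(3)}\le0$ in the $j=3$ case) come for free. For part~(\ref{gftoric_cornDsm}) the paper is cleaner and does not pass through $b_j=1$ at all: since $\{u_1,\dots,u_n\}$ is a $\bZ$-basis it simply writes $m=m_1+m_2$ with $m_1$ the components over $J_0\cup J_+$ and $m_2$ the components over $J_-$, and checks directly that $m_1\in\sigma^\vee$ and $m_2\in(\sigma')^\vee$ using $\langle m,u\rangle\ge0$. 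Your route (deriving $b_j=1$ for $j\in J_+$ and then running the coordinate argument) also works, and the divisibility observation you make is the one the paper itself uses later in Theorem~\ref{smooth is redcued equivalent}, but it is more machinery than needed here.

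One caution: your closing claim that symmetry ``reduces the case analysis to a single representative pair'' is an overstatement. Permuting $1\leftrightarrow2$ within $J_-$ is a genuine symmetry, but $3$ and $4$ in $J_+$ are \emph{not} interchangeable in part~(\ref{gftoric_cor3D}): the smoothness hypothesis is placed on $\sigma_{J_-\cup\{3\}}=\Cone(u_1,u_2,u_3)$ only, and $b_4=1$ while $b_3$ is an arbitrary positive integer. So you must carry out two genuinely different verifications, one for $(\sigma_4,\sigma_i)$ and one for $(\sigma_3,\sigma_i)$ (as the paper does, and as your earlier sentence about ``the four pairs'' implicitly acknowledges); the $j=3$ case is the one where the extra region outside $\sigma^\vee\cup(\sigma')^\vee$ appears and the decomposition must be produced by hand.
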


\begin{proof}
To simplify the notation, set $\sigma_i = \sigma_{\{1, \ldots, n+1\} \setminus \{i\}}$. By Theorem~\ref{gfcri_thm}, it suffices to check \eqref{cone+cone=cone} holds when $(\sigma, \sigma') = (\sigma_j ,\sigma_i)$ for $i \in J_{-}$ and $j \in J_{+}$. The inclusion $\sfS_{\sigma \cap \sigma'} \supseteq \sfS_{\sigma} + \sfS_{\sigma'}$ follows directly from the general fact that $\sigma^\vee + (\sigma')^\vee = (\sigma \cap \sigma')^\vee$. Set
\begin{equation}\label{gftoric_eq}
    u \coloneqq - \sum_{i \in J_{-}} b_i u_i = \sum_{j \in J_{+}} b_j u_j.
\end{equation}
Then $\sigma \cap \sigma' = \Cone(u_k, u \mid k \notin \{i, j\})$.

First assume that \eqref{gftoric_cor3D} holds. Without loss of generality, we may assume that $i = 1$. If $j = n+1$, then the cone $(\sigma \cap \sigma')^\vee$ is generated by dual vectors
\[\{u_1^\vee,u_n^\vee\}\cup\{-b_1u_i^\vee+b_iu_1^\vee\}_{i\in J_-\setminus\{1\}}\cup\{u_j^\vee\}_{j\in J_0}.\]
Since the cone $\sigma_{\{u_1,\ldots,u_n\}}$ is smooth, we see that $\{u_1^\vee,\ldots, u_n^\vee\}$ forms a $\bZ$-basis of $M$. Hence
\begin{align*}
    \sfS_{\sigma \cap \sigma'} & = \Cone(u_1^\vee, -b_1u_i^\vee+b_iu_1^\vee,u_j^\vee\mid i\in J_-\setminus\{1\},j\in J_0) \cap M +\Cone(u_n^\vee) \cap M \\
        & \subseteq \sfS_{\sigma'} + \sfS_{\sigma}.
    \end{align*}

If $j = n$, by the linear relation \eqref{gftoric_eq}, we have
\[
    \overline{(\sigma \cap \sigma')^\vee \setminus \sigma'^\vee} = \Cone(u_2,\ldots,u_{n-1},u_{n+1},u,-u_n)^\vee = \Cone(u_2,\ldots,u_{n-1},u,-u_n)^\vee,
\]
which is generated by dual vectors
\[
    u_1^\vee,\ldots,u_{n-2}^\vee,\ -b_1u_{n-1}^\vee+b_{n-1}u_1^\vee,\ -u_n^\vee
\]
Using that $\{u_1^\vee, \ldots, u_n^\vee\}$ is a $\bZ$-basis of $M$ again, we get
\[
    \overline{(\sigma \cap \sigma')^\vee \setminus\sigma'^\vee}\cap M \subseteq \sfS_{\sigma'} + \sfS_{\sigma}.
\]
According to
\[
    (\sigma \cap \sigma')^\vee = \overline{(\sigma \cap \sigma')^\vee \setminus\sigma'^\vee}\cup (\sigma')^\vee,
\]
it follows that $\sfS_{\sigma \cap \sigma'} \subseteq \sfS_{\sigma} + \sfS_{\sigma'}$, which proves Corollary \ref{gftoric_cor} in the situation \eqref{gftoric_cor3D}.

Now assume that \eqref{gftoric_cornDsm} holds. For simplicity, we further assume that $1\in J_-$, $n+1 \in J_+$ and $(\sigma, \sigma') = (\sigma_{n+1}, \sigma_1)$. For $m \in \sfS_{\sigma \cap \sigma'}$, we let 
\begin{equation*}
    m_1 = \sum_{j\in J_0\cup J_+} \langle m, u_j \rangle u_j^\vee \quad\mbox{and}\quad m_2 = \sum_{i\in J_-} \langle m, u_i \rangle u_i^\vee.
\end{equation*}
According to $(\sigma \cap \sigma')^\vee = \Cone(u_k, u \mid k \notin \{1, n + 1\})^\vee$, it follows that $m_1 \in \sigma^\vee$ and $m_2 \in (\sigma')^\vee$. Hence $m = m_1 + m_2 \in \sfS_\sigma + \sfS_{\sigma'}$, as required.
\end{proof}

\begin{rmk}
While the condition \ref{gftoric_cor3D} in Corollary \ref{gftoric_cor} provides a criterion for general dimensions, the $3$-dimensional case (characterized by $|J_\pm|=2$ and $|J_0|=0$) admits a more refined description. Specifically, for general dimensions, condition \eqref{gftoric_cor3D} requires at least one maximal cone $\sigma_J \in \Sigma$ (where $J$ contains $J_-$) to be smooth. However, since the normality of $\overline{\Gamma}_f$ is symmetric with respect to $X_{\Sigma}$ and $X_{\Sigma'}$, the same conclusion holds if a maximal cone $\sigma_J \in \Sigma'$ (where $J$ contains $J_+$) is smooth. Combining these observations, we can strengthen the condition in $3$-dimensional as follows: $\overline{\Gamma}_f$ is normal provided that the cone $\sigma_J$ is smooth for some $J \subseteq \{1, 2, 3, 4\}$ with $|J| = 3$.
\end{rmk}

We provide two counterexamples to illustrate the necessity of the two conditions in Corollary \ref{gftoric_cor} (\ref{gftoric_cor3D}). These conditions require the smoothness of at least one maximal cone and that $|J_+|=2$. In each example, normality fails when exactly one of these requirements is violated.



\begin{example}
Consider four integer vectors in $N=\Z^3$
\[u_1=(1,0,0),\quad u_2=(1,3,0),\quad u_3=(0,1,2),\quad u_4=(2,1,-3),\]
which satisfy the wall relation
\[-7u_1-5u_2+9u_3+6u_4=0.\]
Then we have $J_-=\{1,2\}$, $J_+=\{3,4\}$, $J_0=\varnothing$, and two simplicial fans $\Sigma$ and $\Sigma'$ defined as in Section \ref{fan_subsec}. Note that $\sigma_J$ is not smooth for any $J\subseteq\{1,2,3,4\}$ with $|J|=3$. Moreover, the graph closure of $f\colon X_{\Sigma}\dashrightarrow X_{\Sigma'}$ is not normal.

Indeed, we let cones $\sigma=\sigma_4$, $\sigma'=\sigma_1$, where $\sigma_i=\sigma_{\{1,\ldots,4\}\setminus\{i\}}$. We find that $\sfS_\sigma + \sfS_{\sigma'}$ has a generating set consisting of
\[
\begin{matrix}
(6, -2, 1) &
(3, -1, 1) &
(1, 0, 0) &
(0, 1, 0) \\
(0, 0, 1) & 
(-2, 2, -1) &
(-3, 3, -1) &
(-5, 4, -2).
\end{matrix}
\]
However, the lattice point $(-1, 1, 0) \in \sfS_{\sigma\cap\sigma'}$ but not in $\sfS_\sigma + \sfS_{\sigma'}$. Therefore \eqref{cone+cone=cone} does not hold for $(\sigma, \sigma')$.

This counterexample generalizes straightforwardly to higher dimensions. For $n > 3$, consider the embedding $\phi \colon \mathbb{Z}^3 \hookrightarrow N = \mathbb{Z}^n$ defined by mapping $\mathbb{Z}^3$ into the first three components of $\mathbb{Z}^n$. Consider the wall relation in $N$
\[-7u_1-5u_2+\sum_{j=3}^{n-1}0\cdot u_j+9u_n+6u_{n+1}=0,\]
where
\[u_1=\phi(1,0,0),\quad u_2=\phi(1,3,0),\quad u_n=\phi(0,1,2),\quad u_4=\phi(2,1,-3)\]
and $\{u_j\}_{j=3}^{n-1}$ are the remaining standard basis vector $\{e_4, \dots, e_n\}$. By a similar argument to the $3$-dimensional case, it follows that $\overline{\Gamma}_f$ is not normal.
\end{example}

\begin{example}\label{non-normal}
Let $\mathscr{S}_0 \coloneqq \{u_i \mid 1\leq i\leq 5\}$ be a $\bZ$-basis of $N \simeq \bZ^5$, and let $\sigma_0 = \Cone (\mathscr{S}_0)$. Consider the affine toric variety $X_{\Sigma_0} = U_{\sigma_0}$ and the wall relation
\[
    -3u_1-2u_2-u_3+3u_4+2u_5+u_6=0.
\]
Then we have $J_{-} = \{1, 2, 3\}$, $J_{+} = \{4, 5, 6\}$, $J_0 = \varnothing$, and two simplicial fans $\Sigma$ and $\Sigma'$ defined as in Section \ref{fan_subsec}.
Although the cone $\sigma_{J_{-} \cup \{4, 5\}}$ is smooth by construction, the graph closure of $f \colon X_{\Sigma} \dashrightarrow X_{\Sigma'}$ is not normal. 

Indeed, we let cones $\sigma = \sigma_4$, $\sigma' = \sigma_1$, where $\sigma_i = \sigma_{\{1, \ldots, 6\} \setminus \{i\}}$. 
We find that $\sfS_\sigma + \sfS_{\sigma'}$ has a generating set consisting of
\[
\begin{matrix}
( 2, 0, 0,  0, 3)&
( 0, 0, 1,  0, 0)&
( 0, 0, 2,  0, 1)&
(-1, 0, 3,  0, 0)\\
(-1, 1, 1,  0, 0)&
( 0, 3, 0,  2, 0)&
(-2, 3, 0,  0, 0)&
( 0, 0, 1, -1, 2)\\
( 0, 1, 0,  0, 0)&
( 1, 0, 0,  1, 0)&
( 0, 1, 0,  0, 1)&
( 1, 0, 0,  0, 1)\\
( 0, 0, 0, -1, 1)&
( 0, 0, 0, -1, 0)&
(-1, 2, 0,  0, 0).&  
\end{matrix}
\]

However, the lattice point $(-1, 2, 0, -1, 2) \in \sfS_{\sigma\cap\sigma'}$ but not in $\sfS_\sigma + \sfS_{\sigma'}$. Therefore \eqref{cone+cone=cone} does not hold for $(\sigma, \sigma')$.
\end{example}

\section{Fiber Products}\label{fp_sec} 



In this section, we study the fiber product $X \coloneqq X_{\Sigma}\times_{X_{\Sigma_0}}X_{\Sigma'}$ in the category of schemes for two refinements $\Sigma$ and $\Sigma'$ of a fan $\Sigma_0$. In particular, we will concentrate on toric flips.

\begin{proposition}\label{fpcri_prop}
Let $f \colon X_{\Sigma} \dashrightarrow X_{\Sigma'}$ be the birational map induced by toric morphisms $X_{\Sigma} \rightarrow X_{\Sigma_0} \leftarrow X_{\Sigma'}$. Then the following statements are equivalent:
\begin{enumerate}    
    \item\label{fpcri_prop0} The fiber product $X$ is a normal variety.
    
    \item\label{fpcri_prop1} The scheme $X$ is irreducible, reduced and the graph closure $\overline{\Gamma}_f$ is a normal toric variety.
    
    \item\label{fpcri_prop2} $X=X_{\widetilde{\Sigma}}$.
\end{enumerate}
\end{proposition}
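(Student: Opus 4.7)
The plan is to prove the cyclic chain of implications $\eqref{fpcri_prop2} \Rightarrow \eqref{fpcri_prop0} \Rightarrow \eqref{fpcri_prop1} \Rightarrow \eqref{fpcri_prop2}$, with Theorem \ref{gfbir_prop} carrying essentially all the substantive content. The first link $\eqref{fpcri_prop2} \Rightarrow \eqref{fpcri_prop0}$ is immediate: by Lemma \ref{ccr_is_fan} the coarsest common refinement $\widetilde{\Sigma}$ is a genuine fan, so $X_{\widetilde{\Sigma}}$ is a normal toric variety in the classical sense.

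The engine driving the two remaining implications is the following observation. The fibered torus $T_N \times_{T_N} T_N \simeq T_N$ embeds as a \emph{nonempty} open subscheme of $X$ via the obvious projections, and by definition (see the opening of Section \ref{gf_sec}) the graph closure $\overline{\Gamma}_f$ is the closure of this torus inside $X$ equipped with its reduced scheme structure. Consequently, whenever $X$ is irreducible the torus is automatically dense, so the underlying reduced scheme $X_{\red}$ coincides with $\overline{\Gamma}_f$; if moreover $X$ is reduced, then $X = X_{\red} = \overline{\Gamma}_f$ as schemes.

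With this in hand, for $\eqref{fpcri_prop0} \Rightarrow \eqref{fpcri_prop1}$ I will invoke the standard convention that a normal variety is integral, whence $X$ is both irreducible and reduced; the observation gives $X = \overline{\Gamma}_f$, and the normality of $X$ transfers immediately to $\overline{\Gamma}_f$, which is already known to be a toric variety by Theorem \ref{gfbir_prop}\eqref{gfbir_prop1}. Conversely, for $\eqref{fpcri_prop1} \Rightarrow \eqref{fpcri_prop2}$, the same observation yields $X = \overline{\Gamma}_f$, and the assumed normality of $\overline{\Gamma}_f$ forces the finite birational normalization morphism $X_{\widetilde{\Sigma}} \to \overline{\Gamma}_f$ furnished by Theorem \ref{gfbir_prop}\eqref{gfbir_prop2} to be an isomorphism, yielding $X = X_{\widetilde{\Sigma}}$. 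I do not foresee any genuine obstacle: the only mild subtlety is confirming that the open subscheme $T_N \subseteq X$ is dense (which uses only irreducibility) and that its reduced closure really equals $\overline{\Gamma}_f$, both of which are built into the definitions. In effect the proposition is a clean repackaging of Theorem \ref{gfbir_prop} together with the convention ``normal variety $=$ integral normal scheme''.
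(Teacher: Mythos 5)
Your proposal is correct and follows essentially the same route as the paper: the paper's (very terse) proof likewise observes that under any of the three conditions $X$ is irreducible and reduced, hence $X = X_{\red} = \overline{\Gamma}_f$, and then reads off the equivalences from Theorem \ref{gfbir_prop}. Your write-up merely makes explicit the cyclic chain of implications and the density of $T_N \times_{T_N} T_N$ in $X$ that the paper leaves implicit.
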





    
    

Note that the conditions in \eqref{fpcri_prop1}  are all local properties by Theorem \ref{gfcri_thm}.


\begin{proof}
Note that $X_\Sigma$ is irreducible and so is the graph closure $\overline{T_N \times_{T_N} T_N } = \Graphc$. If the fiber product $X$ satisfies one of the conditions \eqref{fpcri_prop0}, \eqref{fpcri_prop1}, \eqref{fpcri_prop2}, then $X$ is irreducible and reduced and thus $\Graphc = X = X_{\red} $. Therefore Proposition \ref{fpcri_prop} follows from Theorem \ref{gfbir_prop}.
\end{proof}

\begin{example}
In this example, we will illustrate how Proposition \ref{fpcri_prop} provides a method to construct a non-normal fiber product.
Let $\sigma_0=\Cone(u_1,\ldots,u_5)$ be a strongly convex cone in $N\simeq \Z^3$ such that $\Cone(u_i,u_j)$ is a $2$-dimensional face if and only if $|i-j|=1$ or $4$. To simplify the notation, let
\[\sigma_J=\Cone(u_j\mid j\in J) \text{ for } J\subseteq\{1,2,3,4,5\}.\]
We start with the $3$-dimensional toric variety $X_\Sigma$ such that $\Sigma$ contains three cones $\sigma_{\{1,2,4\}}$, $\sigma_{\{2,3,4\}}$, $\sigma_{\{1,4,5\}}$. If we perform two consecutive toric flips along the walls $\sigma_{\{2,4\}}$ and $\sigma_{\{1,4\}}$, then we will obtain the toric variety $X_{\Sigma'}$. Let $\Sigma_0$ be the fan removing the walls $\sigma_{\{2,4\}}$ and $\sigma_{\{1,4\}}$ from $\Sigma$. Then $X_{\Sigma}$ and $X_{\Sigma'}$ are two varieties over $X_{\Sigma_0}$. We claim that the fiber product $X=X_{\Sigma}\times_{X_{\Sigma_0}}X_{\Sigma'}$ is not a normal toric variety.

According to Proposition \ref{fpcri_prop}, if $X$ is a normal toric variety, then $X=X_{\widetilde{\Sigma}}$, where $\widetilde{\Sigma}$ is defined in \eqref{fp_fan}. Let $\phi$, $\phi'$, and $\widetilde{\phi}$ be the birational morphisms from $X_{\Sigma}$, $X_{\Sigma'}$ and $X_{\widetilde{\Sigma}}$ to $X_{\Sigma_0}$ respectively. Observe that the exceptional locus $Z$ of $\phi$ is 
\[\operatorname{Exc}(\phi)=V(\sigma_{\{1,4\}})\cup V(\sigma_{\{2,4\}})=\P^1\cup_{\text{pt}}\P^1,\]
which maps to $S\coloneqq V(\sigma_0)$. Similarly, the exceptional locus $Z'$ of $\phi'$ is $\P^1\cup_{\text{pt}}\P^1$. Then
\begin{equation}\label{too_many_exceptional}
(\P^1\cup_{\text{pt}}\P^1)\times(\P^1\cup_{\text{pt}}\P^1)\subseteq X=X_{\widetilde{\Sigma}}
\end{equation}
contained in $\operatorname{Exc}(\widetilde{\phi})$. Since $\operatorname{Exc}(\widetilde{\phi})$ is the union of three $\P^1\times\P^1$, it can be contained in at most three disjoint $(\C^\times)^2$. However, \eqref{too_many_exceptional} indicates that $\operatorname{Exc}(\widetilde{\phi})$ contains at least four disjoint $(\C^\times)^2$, which leads to a contradiction. So we conclude that $X$ is not a normal toric variety.
\end{example}




\subsection{Irreducibility}

We are going to prove that the reduced scheme associated to the fiber product under a toric flip is a toric variety.

\begin{theorem}\label{fpred_thm}
Let $f \colon X_{\Sigma} \dashrightarrow X_{\Sigma'}$ be a toric flip via toric morphisms $X_{\Sigma} \rightarrow X_{\Sigma_0} \leftarrow X_{\Sigma'}$ \eqref{tflip_eqn} and $X = X_{\Sigma} \times_{X_{\Sigma_0}} X_{\Sigma'}$ be the fiber product. Then we have:
\begin{enumerate}
    \item\label{fpred_thm1} The reduced scheme $X_{\red}$ associated to $X$ is the toric variety $\Graphc$.

    \item\label{fpred_thm2} The normalization of $X_{\red}$ is the toric variety $X_{\tSigma}$ defined by the fan \eqref{fp_fan}.
\end{enumerate}
\end{theorem}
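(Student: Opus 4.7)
Part (ii) follows from (i) together with Theorem \ref{gfbir_prop}(2): once $X_{\red} = \Graphc$ is established, the normalization of $X_{\red}$ coincides with that of $\Graphc$, which is $X_{\tSigma}$. I therefore focus on (i). Since $\Graphc$ is a closed irreducible subscheme of $X$ of dimension $n = \dim X_\Sigma$, the inclusion $\Graphc \subseteq X_{\red}$ is automatic, and the plan is to establish the reverse inclusion by proving that $X$ is irreducible as a scheme, so that $X_{\red}$ is reduced and irreducible of dimension $n$ and hence equal to $\Graphc$.

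The question is local on $X_{\Sigma_0}$, so I reduce to the affine case $X_{\Sigma_0} = U_{\sigma_0}$ with $\sigma_0 = \Cone(u_1, \ldots, u_{n+1})$ and wall relation \eqref{wallrel_eqn}. It then suffices to show each affine piece $U_\sigma \times_{U_{\sigma_0}} U_{\sigma'}$ is irreducible, where $\sigma = \sigma_{\{1,\ldots,n+1\}\setminus\{i\}}$ for some $i \in J_+$ and $\sigma' = \sigma_{\{1,\ldots,n+1\}\setminus\{j\}}$ for some $j \in J_-$. By the proof of Theorem \ref{gfbir_prop}, this piece is $\Spec A$ with $A = \bC[\sfSp]/(I_\sA + I_{\sA'})$ and contains $\Graphc$ locally as $\bfV(I)$ for the prime ideal $I$ of \eqref{gfbir_ideal}. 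Since $\dim \bfV(I) = n = \dim \Spec A$, the ideal $I$ is already a minimal prime containing $I_\sA + I_{\sA'}$, and irreducibility of $\Spec A$ becomes the radical identity $\sqrt{I_\sA + I_{\sA'}} = I$.

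To establish this identity, the plan is to show that each binomial generator $x^\alpha - x^\beta$ of $I$ (with $\alpha - \beta \in L$) has some power in $I_\sA + I_{\sA'}$. The key observation is that the sublattices $L$ and $L_\sA + L_{\sA'}$ of $\sfSp$ have equal rank, so the quotient $L/(L_\sA + L_{\sA'})$ is torsion: there exists a positive integer $N$ with $N(\alpha - \beta) \in L_\sA + L_{\sA'}$. I aim to choose $N$ as a common multiple of the $|b_k|$'s for $k \in J_- \cup J_+$, so that the wall relation \eqref{wallrel_eqn} can be used integrally to realize the decomposition inside $\sfSp$. Lemma \ref{gfbir_lm} then supplies a sequence of alternating $\sA$- and $\sA'$-moves in $\sfSp$ connecting $N\alpha$ and $N\beta$, yielding $x^{N\alpha} - x^{N\beta} \in I_\sA + I_{\sA'}$ and hence $x^\alpha - x^\beta \in \sqrt{I_\sA + I_{\sA'}}$.

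The main obstacle will be the combinatorial construction of the Lemma \ref{gfbir_lm} sequence while keeping every intermediate exponent vector inside $\sfSp$ (nonnegative coordinates). This uses the specific flip geometry: because $\sigma$ omits exactly one ray $u_i$ with $i \in J_+$ and $\sigma'$ omits exactly one $u_j$ with $j \in J_-$, the wall relation expresses $|b_i|\,u_i$ as a nonnegative integer combination of the rays of $\sigma$ (and likewise $|b_j|\,u_j$ using the rays of $\sigma'$); shifting each intermediate term by a sufficiently large element of $\sfS_{\sigma_0}$ then keeps the entire path inside $\sfSp$. That this argument genuinely requires $N > 1$, and may fail for $N = 1$, is precisely why the subtler reducedness question $X = \Graphc$ is addressed separately in the sections that follow.
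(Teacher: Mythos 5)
Your reduction steps are sound and match the paper's: part (ii) does follow from (i) via Theorem \ref{gfbir_prop}, the problem does localize to showing each affine piece $\Spec A$ with $A=\bC[\sfSp]/(I_\sA+I_{\sA'})$ is irreducible, and this is equivalent to the radical identity $\sqrt{I_\sA+I_{\sA'}}=I$. The gap is in your core mechanism for proving that identity. First, the ``key observation'' is vacuous: since $\sigma_0$ is full-dimensional and strongly convex, $\sA_0$ generates $M$ as a group, so for any $\gamma=\gamma_0+\gamma_1+\gamma_2\in L$ one can pick $\delta_0\in\bZ^{\sA_0}$ with $m_{\delta_0}=-m_{\gamma_2}$ and write $\gamma=(\gamma_0+\gamma_1-\delta_0)+(\delta_0+\gamma_2)\in L_\sA+L_{\sA'}$; hence $L=L_\sA+L_{\sA'}$ and your torsion exponent $N$ is always $1$. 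The obstruction to $x^\alpha-x^\beta\in I_\sA+I_{\sA'}$ is therefore not a lattice/torsion phenomenon at all but purely the nonnegativity constraint on the intermediate exponents in Lemma \ref{gfbir_lm} --- a semigroup obstruction that rescaling by $N$ does not remove. Second, and fatally, even if you could produce $x^{N\alpha}-x^{N\beta}\in I_\sA+I_{\sA'}$, this does not imply $x^\alpha-x^\beta\in\sqrt{I_\sA+I_{\sA'}}$: one has $x^{N\alpha}-x^{N\beta}=(x^\alpha-x^\beta)\cdot h$ with $h=\sum_{k=0}^{N-1}x^{k\alpha+(N-1-k)\beta}$, so the divisibility runs the wrong way (compare $x^2-y^2\in\langle x^2-y^2\rangle$ while $x-y\notin\sqrt{\langle x^2-y^2\rangle}$). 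What you would need is $(x^\alpha-x^\beta)^N\in I_\sA+I_{\sA'}$, and its expansion produces cross terms $x^{k\alpha+(N-k)\beta}$ that your setup does not control.

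The paper's proof works around exactly this point. In Step 1 it shows that it suffices to prove $x^{k_i\alpha_{0i}}(x^{\beta_1}-x^{\beta_2})\in\sqrt{I_\sA+I_{\sA'}}$ for $i=1,2$, where the crucial feature is that $x^{\alpha_{0i}}$ is a monomial factor of the term $x^{\beta_i}$ itself; then in the expansion of $(x^{\beta_1}-x^{\beta_2})^{k_1+k_2+1}$ every summand is divisible by $x^{k_1\beta_1}$ or $x^{k_2\beta_2}$, hence by one of those controlled multiples, which legitimately yields radical membership. Producing those monomial multiples then occupies Steps 2--4, a case analysis on the position of $m_{0i}$, $m_i$, $m'_i$ relative to $\Cone(u_1)^\perp$, $\Cone(u_{n+1})^\perp$ and the dual faces $\sigma_{J_0\cup J_-}^*$, $\sigma_{J_0\cup J_+}^*$, using the wall relation in an essential way; in the residual degenerate case the binomial lies in $I_\sA+I_{\sA'}$ outright. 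Your instinct that the flip geometry must enter (irreducibility fails for general fiber products of toric maps, as in the introduction's example) is right, but it enters through this positional analysis, not through clearing denominators in the wall relation. To repair your argument you would need to replace ``raise the exponents to a common multiple'' by ``multiply the binomial by suitable monomial divisors of its own terms'' and then carry out the paper's (or an equivalent) case analysis to construct those divisors.
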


\begin{proof}
If $X_{\red}$ is irreducible, then Theorem \ref{fpred_thm} follows from Theorem \ref{gfbir_prop} and $\Graphc = X_{\red}$. As this is a local question, we may assume that $X_{\Sigma_0}$ is an affine toric variety $U_{\sigma_0}$ defined by the cone $\sigma_0 = \Cone (u_1, \ldots, u_{n + 1})$ and there is a wall relation \eqref{wallrel_eqn}
\begin{equation}\label{fpred_wall}
    u \coloneqq - \sum_{i \in J_{-}} b_i u_i = \sum_{j \in J_{+}} b_j u_j.
\end{equation}

Without loss of generality, we can assume that $1 \in J_{-}$ and $ n + 1 \in J_{+}$. Set 
\[
    \sigma = \sigma_{\{1, \ldots, n+1\} \setminus \{n + 1\}} \mbox{ and } \sigma' = \sigma_{\{1, \ldots, n+1\} \setminus \{1\}}.
\]
To prove \eqref{fpred_thm1}, it suffices to show that the reduced scheme associated to $U_{\sigma} \times_{U_{\sigma_0}} U_{\sigma'}$ is an affine toric variety. 

Let $\sA_0, \sA$ and $\sA'\subseteq M$ be generating sets of $\sfS_{\sigma_0}$, $\sfS_{\sigma}$ and $\sfS_{\sigma'}$ respectively. We may assume that $\sA \cap \sA' = \sA_0$ because of the inclusion $\sigma_0^\vee \subseteq \sigma^\vee \cap (\sigma')^\vee$. Set $\sB = \sA \cup \sA'$. Clearly, the fiber product $U_{\sigma} \times_{U_{\sigma_0}} U_{\sigma'}$ is the spectrum of the ring \eqref{gfbir_eqn} as we have seen in the proof of Theorem \ref{gfbir_prop}. Then the irreducibility of $X_{\red}$ follows from the claim that
\begin{equation}\label{fpred_eqn1}
    \sqrt{I_{\sA} + I_{\sA'}} = I_{\sB}.
\end{equation}

To see \eqref{fpred_eqn1}, the inclusion $\sqrt{I_{\sA} + I_{\sA'}} \subseteq I_{\sB}$ follows from the fact that $I_\sB$ is a prime ideal and $I_{\sA} + I_{\sA'} \subseteq I_{\sB}$.
For the other inclusion, the proof will be divided into four steps.

\noindent\textbf{Step 1.} As in the proof of Theorem \ref{gfbir_prop}, we pick
\begin{equation}\label{fpred_eqn2}
    (\alpha_{01} + \alpha_1 + \alpha'_1) - (\alpha_{02} + \alpha_2 + \alpha'_2) \in L_\sB
\end{equation}
where $\alpha_{01}, \alpha_{02} \in \eZ^{\sA_0}$, $\alpha_1, \alpha_2 \in \eZ^{\sA \setminus \sA_0}$ and $\alpha'_1, \alpha'_2 \in \eZ^{\sA' \setminus \sA_0}$, using the identification \eqref{gftid_eqn0}. To simplify notation, we set $\beta_1 = \alpha_{01} + \alpha_1 + \alpha'_1$ and $\beta_2 = \alpha_{02} + \alpha_2 + \alpha'_2$. 
Suppose that there exists $k_i \in \bN$ such that
\begin{equation}\label{fpred_eqn3}
    x^{k_i \alpha_{0i}} (x^{\beta_1} - x^{\beta_2}) \in \sqrt{I_\sA + I_{\sA'}}
\end{equation}
for $i=1$, $2$. Using binomial expansion
\[
    (x^{\beta_1} - x^{\beta_2})^{k_1 + k_2 + 1} = \sum_{k=0}^{k_1+k_2} \binom{k_1+k_2}{k} x^{k \beta_1} x^{(k_1 + k_2 - k) \beta_2} (x^{\beta_1} - x^{\beta_2})
\]
and that either $k \geq k_1$ or $k_1+k_2-k \geq k_2$ holds, we get $x^{\beta_1} - x^{\beta_2} \in \sqrt{I_\sA + I_{\sA'}}$ and thus verify \eqref{fpred_eqn1}, as required.

To get \eqref{fpred_eqn3}, we can replace, if necessary, \eqref{fpred_eqn2} by the following two relations
\begin{align}
    (k_1 + 1) \alpha_{01} + \alpha_1 + \alpha'_1 \equiv (k_1 \alpha_{01} + \alpha_{02})+ \alpha_2 + \alpha'_2 &\pmod{L_\sB}, \label{fpred_eqn4} \\ 
    (k_2 \alpha_{02} + \alpha_{01}) + \alpha_1 + \alpha'_1 \equiv (k_2 + 1) \alpha_{02} + \alpha_2 + \alpha'_2 &\pmod{L_\sB}. \label{fpred_eqn5} 
\end{align}

In general, the pair $(\alpha_{01}, \alpha_{02})$ can be replaced by any pair in $\{ \alpha_{01}, \alpha_1, \alpha'_1 \}\times\{\alpha_{02}, \alpha_2, \alpha'_2\}$. This process of replacements will occur frequently in the following algorithm of finding such pair $(k_1,k_2)$ of natural numbers.

\noindent\textbf{Step~2.} To simplify notation, we set $m_{0i} = m_{\alpha_{0i}}$, $m_i = m_{\alpha_i}$ and $m'_i = m_{\alpha'_i}$ for $i = 1, 2$. If $m_{01}, m_{02} \in \sigma_0^\vee\setminus\Cone(u_1)^\perp$, then we can take $k_1$, $k_2 \in \bN$ such that $m_{0i} + (1 / k_i) m'_j \in \sigma_0^\vee$ for each $j = 1,2$. Set $m_{ij} = k_i m_{0i} + m'_j$ for $1 \leq i, j \leq 2$. We find that $m_{ij} \in \sigma_0^\vee \cap M = \sfS_{\sigma_0}$ and thus $m_{ij} = m_{\beta_{ij}}$ for some $\beta_{ij} \in \eZ^{\sA_0}$. Then the relation \eqref{fpred_eqn4} reduces to
\begin{align*}
    k_1 \alpha_{01} + \alpha'_1 \equiv \beta_{11} &\pmod{L_{\sA'}}, \\
    \beta_{12} \equiv k_1 \alpha_{01} + \alpha'_2 &\pmod{L_{\sA'}}, \\
    \beta_{11} + \alpha_{01} + \alpha_1 \equiv \beta_{12} + \alpha_{02} + \alpha_2 
    &\pmod{L_{\sA}}.
\end{align*}
Hence, the binomial corresponding to the relation \eqref{fpred_eqn4} is in $I_\sA + I_{\sA'}$. A similar argument holds for \eqref{fpred_eqn5}.

We also can replace the condition $\sigma_0^\vee\setminus\Cone(u_1)^\perp$ with $\sigma_0^\vee\setminus\Cone(u_{n+1})^\perp$. Then we can take $k_1$, $k_2\in\N$ such that $m_{0i}+(1/k_i)m_j\in\sigma_0^\vee$ for each $j=1$, $2$, and the similar argument also works.

\noindent\textbf{Step~3.} 
We use the following notation for the remainder of the proof. Given a face $\tau$ of a cone $\sigma$, we define $\tau^* = \sigma^\vee \cap \tau^{\perp}$, the dual face of $\tau$. Then $\tau^*$ is a face of $\sigma^\vee$. 

Consider the face $\sigma_{J_0 \cup J_{-}}^*$ of $\sigma^\vee$ and the face $\sigma_{J_0 \cup J_{+}}^*$ of $(\sigma')^\vee$. In the following cases, we can immediately check that:


\begin{enumerate}[\lb]
    \item If $m_{0i} \neq 0$, then $k_i m_{0i} \in [\sigma_0^ \vee \setminus\Cone(u_1)^\perp] + \sigma_{J_0 \cup J_{+}}^*$ for some $k_i\in\N$.

    \item If $m_i \notin \sigma_{J_0 \cup J_{-}}^*$, then $k_i m_i \in [\sigma_0^ \vee \setminus\Cone(u_{n+1})^\perp] + \sigma_{J_0 \cup J_{-}}^*$ for some $k_i\in\N$.

    \item If $m'_i\notin \sigma_{J_0 \cup J_{+}}^*$, then $k_i m'_i \in [\sigma_0^ \vee \setminus\Cone(u_1)^\perp] + \sigma_{J_0 \cup J_{+}}^*$ for some $k_i\in\N$.
\end{enumerate}

We are going to verify \eqref{fpred_eqn3} in the above cases. Suppose that one of the above three conditions holds for $i=1,2$, and let $\gamma_i$ denote the vector corresponding to the satisfied lattice point, where $\gamma_i \in \{\alpha_{0i}, \alpha_i, \alpha'_i \}$. Replace \eqref{fpred_eqn2} with \eqref{fpred_eqn4} and \eqref{fpred_eqn5} on the pairs $(\gamma_1, \gamma_2)$ and $(k_1,k_2)$ as in Step $1$. By replacing $k_i \gamma_i$ with the above expression on the both sides of \eqref{fpred_eqn4} and \eqref{fpred_eqn5}, we can reduce it to Step~2.

For example, we treat the case $\gamma_1=\alpha_{01}$ and $\gamma_2=\alpha_{02}$, which is one of the nine cases. We take
\[
    m_{\gamma_{0i}} \in [\sigma_0^\vee\setminus\Cone(u_1)^\perp] \quad\mbox{ and } \quad m_{\gamma_i'} \in \sigma_{J_0\cup J_+}^* 
\]
for some $\gamma_{0i}\in\Z_{\geq 0}^{\sA_0}$, $\gamma_i'\in\Z_{\geq 0}^{\sA'\setminus\sA_0}$ such that
$k_im_{0i}=m_{\gamma_{0i}}+m_{\gamma_i'}$. Then the relation \eqref{fpred_eqn4} reduces to the congruences $k_1\alpha_{01} \equiv \gamma_{01}+\gamma_1' \pmod{L_{\sA'}}$ and 
\begin{equation}\label{fpred_eqn2_new}
    (\gamma_{01}+\alpha_{01}) + \alpha_1 + (\alpha_1'+\gamma_1') \equiv  (\gamma_{01}+\alpha_{02})+ \alpha_{02} + (\alpha_2'+\gamma_1') 
    \pmod{L_{\sB}}.
\end{equation}
Note that $m_{\gamma_{01}+\alpha_{01}}$, $m_{\gamma_{01}+\gamma_{02}}\in\sigma_0^\vee\setminus\Cone(u_1)^\perp$, since $m_{\gamma_{01}}\in\sigma_0^\vee\setminus\Cone(u_1)^\perp$. So we can apply Step $2$ to show that the binomial corresponding to the relation \ref{fpred_eqn2_new} is in $\sqrt{I_\sA+I_{\sA'}}$, and thus the binomial corresponding to the relation \eqref{fpred_eqn4} is in $\sqrt{I_\sA+I_{\sA'}}$. A similar argument holds for \eqref{fpred_eqn5}.

\noindent\textbf{Step 4.} The remaining case to consider is when at least one of $i \in \{1, 2\}$ does not satisfy the condition in Step~$3$. Without loss of generality, we assume that $i=2$, that is,
\[
    m_{02} = 0,\ m_2 \in \sigma_{J_0 \cup J_{-}}^*,\ m'_2 \in \sigma_{J_0 \cup J_{+}}^*.
\]
Recall that $u$ is the vector \eqref{fpred_wall} associated to the wall relation and thus $\Cone(u)$ is a face of $\sigma \cap \sigma'$. Since 
\begin{equation}\label{fpred_eqn6}
    \sigma_{J_0 \cup J_{-}}^* + \sigma_{J_0 \cup J_{+}}^* =(\sigma \cap \sigma')^\vee \cap \Cone(u)^\perp
\end{equation}
is a face of $(\sigma\cap\sigma')^\vee$ and $m_{01} + m_1 + m'_1 = m_2 + m'_2$ belongs to the left hand side of \eqref{fpred_eqn6}, we get
\[
    m_{01} \in \sfS_{\sigma_0} \cap (\sigma_{J_0 \cup J_{-}}^* + \sigma_{J_0 \cup J_{+}}^*) = \{ 0 \},\ m_1 \in \sigma_{J_0 \cup J_{-}}^*,\ m'_1 \in \sigma_{J_0 \cup J_{+}}^*.
\]
Therefore we find that
\[
    m_1 - m_2 = m'_1 - m'_2 \in \sigma_{J_0 \cup J_{-}}^\perp \cap \sigma_{J_0 \cup J_{+}}^\perp = \{ 0 \}.
\]
Since $\sigma_0^\vee$ is strongly convex, it forces $\alpha_{01}=\alpha_{02}=0$, and thus
\begin{align*}
    \alpha_1 \equiv \alpha_2 &\pmod{L_{\sA}}, \\
    \alpha_{01} + \alpha'_1 \equiv \alpha_{02} + \alpha'_2 
    &\pmod{L_{\sA'}}.
\end{align*}
Hence the binomial
\begin{align*}
    x^{\beta_1} - x^{\beta_2} = x^{\alpha_{01} + \alpha'_1} (x^{\alpha_1} - x^{\alpha_2}) + x^{\alpha_2} (x^{\alpha_{01} +\alpha'_1} - x^{\alpha_{02} + \alpha'_2})
\end{align*}
belongs to $I_\sA + I_{\sA'}$, as required.

The second statement follows from Theorem \ref{gfbir_prop}.
\end{proof}

\begin{remark}\label{gfbir_rmk}
For any (not necessarily maximal) cone $\sigma\in\Sigma$ and $\sigma'\in\Sigma'$ contained in a cone $\sigma_0\in\Sigma_0$, we also have the equality 
\begin{equation}\label{gfbir_rmk_eqn1}
    \sqrt{I_\sA + I_{\sA'}} = I
\end{equation}
where $I$ is the ideal  of $\bC[\sfSp]$ as defined in \eqref{gfbir_ideal} and $I_\sA$ and $I_{\sA'}$ are corresponding toric ideals of $\sfS_\sigma$ and $\sfS_{\sigma'}$ respectively. Indeed, by Theorem \ref{gfbir_prop} and Proposition \ref{fpcri_prop}, we have
\[\Spec (\bC[\sfSp] / \sqrt{I_\sA+I_{\sA'}}) =(U_\sigma\times_{U_{\sigma_0}}U_{\sigma'})_{\operatorname{red}}=\overline{\Gamma}_f\cap(U_\sigma\times_{U_{\sigma_0}}U_{\sigma'})= \Spec \left(\bC[\sfSp] / I\right)\]
Note that $I$ is equal to the toric ideal $I_{\sA\cup\sA'}$ if $\sigma_0=\sigma+\sigma'$.

\end{remark}

\subsection{Reduced property}

In this subsection, we study the reduced property of $X$. When we concentrate on a $3$-dimensional toric flip $f$ satisfied the condition \eqref{gftoric_cor3D} in Corollary \ref{gftoric_cor}, by Corollary \ref{gftoric_cor}, Proposition \ref{fpcri_prop}, and Theorem \ref{fpred_thm}, the following statements are equivalent:
\begin{enumerate}[(i)]
    \item $X=X_{\Sigma}\times_{X_{\Sigma_0}}X_{\Sigma'}$ is a normal toric variety.

    \item $X=X_{\widetilde{\Sigma}}$,

    \item $X$ is reduced.
\end{enumerate}
Since the property of being reduced is local, we may assume $X_{\Sigma_0}$ is an affine toric variety $U_{\sigma_0}$ defined by $\sigma_0=\Cone(u_{1},u_{2},u_{3},u_{4})$, and $X_\Sigma$, $X_{\Sigma'}$ are defined by the wall relation \eqref{wallrel_eqn} as in Section \ref{fan_subsec}. To simplify notation, let $\sigma_i=\sigma_{\{1,2,3,4\}\setminus\{i\}}$ and $U_{ji}\coloneqq U_{\sigma_j}\times_{U_{\sigma_0}}U_{\sigma_i}$.

We normalize our wall relation such that $b_i\in\Z$ and $\gcd(b_1, b_2, b_3, b_4) = 1$.  
Note that since $\sigma_4$ is a smooth cone, $\{u_1,u_2,u_3\}$ is a $\Z$-basis of $N$. From the wall relation \eqref{wallrel_eqn}, we have that $b_4\mid b_i$ for $i=1$, $2$, $3$, and thus $b_4=1$.

First, we show a numerical criterion for the reduced property on the affine piece $U_{31}$. 

\begin{lm}\label{n=3_reduced_equiv}
Let $\{a\}_b$ denote the remainder of $a$ divided by $b$. If $g=\operatorname{gcd}(b_1,b_2)>0$ and $b_i=-gb_i'$ for $i=1$, $2$, then the following statements are equivalent:
\begin{enumerate}[(i)]
\setcounter{enumi}{3}
    \item\label{n=3_reduced_equiv_1} $U_{31}$ is reduced.

    \item\label{n=3_reduced_equiv_2} For all $0\leq\lambda\leq b_1'b_2'$, there exists a non-negative integer $y\leq\lambda/b_1'$ such that
    \begin{equation}\label{reduced_ineq}
        \{g\lambda\}_{b_3}\geq g\cdot\{\lambda-b_1'y\}_{b_2'}.
    \end{equation}
\end{enumerate}
\end{lm}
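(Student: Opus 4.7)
The plan is to combine Remark~\ref{gfbir_rmk} with Lemma~\ref{gfbir_lm} to convert the reducedness of $U_{31}$ into a combinatorial zigzag-connectedness statement on the fibers of the map $\pi\colon \sfSp \to M$, and then to convert that condition into the asserted numerical inequality. Analyzing the defining inequalities of $\sigma_0^\vee$ and $\sigma_3^\vee$ in the coordinate system provided by the $\bZ$-basis $\{u_1,u_2,u_3\}$ of $N$ (which we have since $b_4=1$), one checks directly that $\sfS_{\sigma_3}=\sfS_{\sigma_0}+\bZ_{\geq 0}(-u_3^\vee)$. I would therefore take a generating set $\sA_0$ of $\sfS_{\sigma_0}$, set $\sA=\sA_0\cup\{-u_3^\vee\}$, and take $\sA'=\sA_0$ together with finitely many generators of $\sfS_{\sigma_1}$ having negative $u_1^\vee$-component, centered on the primitive ray generator $(-b_2',b_1',0)$ of the facet of $\sigma_1^\vee$ dual to $\Cone(u_3,u_4)$. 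In these coordinates, each element of $\pi^{-1}(m)$ can be recorded by a pair $(k,z)\in\bZ_{\geq 0}\times \bZ_{\geq 0}^{\sA'\setminus\sA_0}$ satisfying $m+ku_3^\vee-m_z\in\sfS_{\sigma_0}$, and by Lemma~\ref{gfbir_lm} reducedness of $U_{31}$ becomes: each such fiber is connected under the two kinds of axis moves $(k,z)\to(k',z)$ and $(k,z)\to(k,z')$ that stay in the fiber.

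Next I would isolate the critical family of test fibers. A single generator of $L_{\sA'}$ (resp.\ $L_\sA$) already relates any two lifts of a given element of $\sfS_{\sigma_1}$ (resp.\ $\sfS_{\sigma_3}$), so the only obstruction to axis-connectedness arises near the wall $\langle\cdot,u_4\rangle=0$, where the divisibility condition $b_3\mid g(b_1'm_1+b_2'm_2)$ buried in $\sigma_0^\vee$ governs admissibility. I would parametrize the relevant test elements $m_\lambda$ by an integer $\lambda\in\{0,1,\dots,b_1'b_2'\}$, roughly recording the $u_2^\vee$-coordinate of $m$ modulo the sublattice generated by $(-b_2',b_1',0)$, and let $y\in[0,\lambda/b_1']$ count how many copies of this ray generator are used in the $z$-part. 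A direct computation using the wall relation $g(b_1'u_1+b_2'u_2)=b_3u_3+u_4$ should show that the required axis move is available precisely when $\{g\lambda\}_{b_3}\geq g\cdot\{\lambda-b_1'y\}_{b_2'}$: the remainder modulo $b_3$ reflects the $b_3$-divisibility obstruction inherent to $\sigma_0^\vee$, while the remainder modulo $b_2'$ tracks the residue left after subtracting $y$ copies of the generator, using $\gcd(b_1',b_2')=1$.

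The main obstacle I expect is this last translation: pinning down the correct test family $m_\lambda$ and verifying that its axis-connectedness is captured exactly by the stated inequality. Because $\sfS_{\sigma_0}$ is non-simplicial, it carries extra interior Hilbert generators, so one must check carefully that no other element of $\sA'\setminus\sA_0$ provides a ``hidden'' shortcut bypassing the obstruction above, and conversely that whenever the inequality holds one can explicitly exhibit a zigzag. Once this parametrization is correct, both implications of the lemma should follow by careful bookkeeping with the two families of remainders.
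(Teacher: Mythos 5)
Your skeleton matches the paper's proof: the same zigzag criterion (Lemma~\ref{gfbir_lm}), the same observation that $\sA$ may be taken to be $\sA_0\cup\{-u_3^\vee\}$ (your identity $\sfS_{\sigma_3}=\sfS_{\sigma_0}+\Z_{\geq 0}(-u_3^\vee)$ is correct, using $\langle u_3^\vee,u_4\rangle=-b_3$), and the same final parametrization by $\lambda\in[0,b_1'b_2']$. But the step you defer as ``the main obstacle'' is precisely where the proof lives, and as written there is a genuine gap there. The paper's bridge between the zigzag condition and the arithmetic is a clean semigroup statement: setting $\Gamma=M\cap\overline{(\sigma')^\vee\setminus\sigma^\vee}$ and $\Gamma'=\Gamma\cap\Cone(u_3)^\perp$, reducedness of $U_{31}$ is equivalent to $\Gamma\subseteq\sfS_{\sigma_0}+\Gamma'$. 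Your proposal never formulates an intermediate condition of this kind; it asserts instead that ``the only obstruction to axis-connectedness arises near the wall'' and that a ``direct computation'' will finish. That assertion is exactly what has to be proved, and it is not a computation: the forward direction needs the observation that at the first zigzag step where the exponent of $x^{\gamma}$ (the variable for $-u_3^\vee$) drops, the $\sA_0$-component cannot be zero (because $-u_3^\vee$ lies on a face of $\sigma^\vee$ meeting $\sigma_0^\vee$ only at the origin), followed by an iterated decomposition whose termination rests on strong convexity of $(\sigma')^\vee$; the converse direction needs an induction on the minimal power $k_1$ of $x^{\gamma}$ occurring in a binomial outside $I_\sA+I_{\sA'}$. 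Neither ingredient appears in your plan, and without them the equivalence between reducedness and your ``test family'' condition is unsubstantiated.

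A smaller but real point of confusion: the hyperplane defining the decomposition target $\Gamma'$ is $u_3^\perp$, not $u_4^\perp$; the pairing with $u_4$ enters only afterwards, when one minimizes $\langle m,u_4\rangle=g\lambda-b_3p_3$ over admissible $p_3$ and obtains the remainder $\{g\lambda\}_{b_3}$ (there is no divisibility condition ``$b_3\mid g(b_1'm_1+b_2'm_2)$'' in $\sigma_0^\vee$ — membership is an inequality). Finally, your restriction to $0\leq\lambda\leq b_1'b_2'$ needs justification: one must check the periodicity $P(p_1,p_2)\Leftrightarrow P(p_1+b_2',p_2+b_1')$ and that the inequality is automatic when $\lambda>b_1'b_2'$ (choose $q_1$ divisible by $b_2'$), neither of which your outline supplies.
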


\begin{proof} 
First of all, we have to determine the dual cones of $\sigma=\sigma_3$ and $\sigma'=\sigma_1$, as illustrated in Figure \ref{fig:enter-label}, where we identify $M\simeq\Z^3$ by the dual basis $\{u_1^\vee,u_2^\vee,u_3^\vee\}$ of $\{u_1,u_2,u_3\}$. Note that the coordinates in Figure \ref{fig:enter-label} only represent points on that ray, not that these six points lie in the same plane. 
\begin{figure}
    \centering
    \resizebox{0.75\textwidth}{!}{%
    \begin{tikzpicture}[line cap=round,line join=round,>=triangle 45,x=1.0cm,y=1.0cm]
    \definecolor{yqyqyq}{rgb}{0.5019607843137255,0.5019607843137255,0.5019607843137255}
    \definecolor{qqqqff}{rgb}{0.,0.,1.}
    \clip(-7.,-2.6) rectangle (8.,5.5);
    \draw [line width=1.5pt,dash pattern=on 2pt off 2pt,color=yqyqyq] (-6.512344383008625,0.5991056435198019)-- (-4.,-2.);
    \draw [line width=1.5pt,dash pattern=on 2pt off 2pt,color=yqyqyq] (-6.512344383008625,0.5991056435198019)-- (6.,-2.);
    \draw [line width=1.5pt,dash pattern=on 2pt off 2pt,color=yqyqyq] (-6.512344383008625,0.5991056435198019)-- (-0.4487836999857161,1.1414858999000128);
    \draw [line width=1.5pt,dash pattern=on 2pt off 2pt,color=yqyqyq] (-6.512344383008625,0.5991056435198019)-- (-2.2728315359382667,2.0300597494773776);
    \draw [line width=1.5pt,dash pattern=on 2pt off 2pt,color=yqyqyq] (-6.512344383008625,0.5991056435198019)-- (-1.,5.);
    \draw [line width=2pt] (-1.,5.)-- (-4.,-2.);
    \draw [line width=2pt] (-4.,-2.)-- (6.,-2.);
    \draw [line width=2pt] (6.,-2.)-- (-1.,5.);
    \draw [line width=2pt] (-1.,5.)-- (0.,-2.);
    \draw [line width=2pt] (-2.2728315359382667,2.0300597494773776)-- (6.,-2.);
    \draw (-0.9,5.5) node[anchor=north west] {$(0,0,-1)$};
    \draw (-3.85,2.6329709125008667) node[anchor=north west] {$(1,0,0)$};
    \draw (-4.437801808647936,-1.990682798982761) node[anchor=north west] {$(b,0,a_0)$};
    \draw (-0.4921031868246658,-1.990682798982761) node[anchor=north west] {$(0,b,a_1)$};
    \draw (5.324751482461186,-1.9364463918099324) node[anchor=north west] {$(-a_1,a_0,0)$};
    \draw (-0.36,1.7) node[anchor=north west] {$(0,1,0)$};
    \draw (-2.05,-0.45) node[anchor=north west] {$\sA_0$};
    \draw (-2.2,2.6) node[anchor=north west] {$\sA\setminus\sA_0$};
    \draw (1.026516214014531,-0.45) node[anchor=north west] {$\sA'\setminus\sA_0$};
    \begin{scriptsize}
    \draw [fill=qqqqff] (-1.,5.) circle (2.0pt);
    \draw [fill=qqqqff] (-4.,-2.) circle (2.0pt);
    \draw [fill=qqqqff] (6.,-2.) circle (2.0pt);
    \draw [fill=qqqqff] (0.,-2.) circle (2.0pt);
    \draw [fill=qqqqff] (-2.2728315359382667,2.0300597494773776) circle (2.0pt);
    \draw [fill=qqqqff] (-0.4487836999857161,1.1414858999000128) circle (2.0pt);
    \draw [fill=qqqqff] (-6.512344383008625,0.5991056435198019) circle (2.0pt);
    \draw[color=black] (-6.593698993767866,0.8228308231077164) node {$O$};
    \end{scriptsize}
    \end{tikzpicture}}
    \caption{Dual cones of $\sigma$ and $\sigma'$.}
    \label{fig:enter-label}
\end{figure}
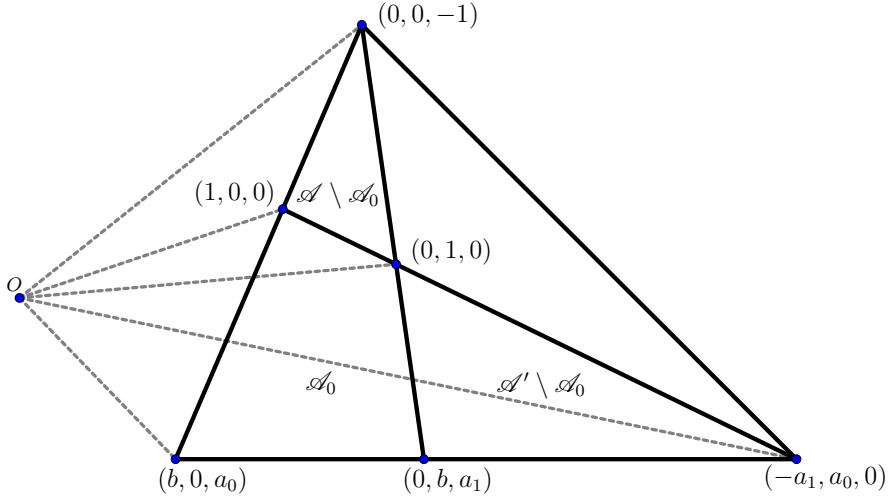

We use the same notation in the proof of Theorem~\ref{fpred_thm}. Observe that (iv) : $U_{31}$ is reduced if and only if (iv') : $x^{\alpha_0+\alpha+\alpha'}-x^{\beta_0+\beta+\beta'}\in I_\sA+I_{\sA'}$ when $(\alpha_0+\alpha+\alpha')-(\beta_0+\beta+\beta')\in L_\sB$. 

The {\bf  key point} is that the generating set $\sA$ can be selected to be $\sA_0\cup\{-u_3^\vee\}$. Let $\gamma\in\Z_{\geq0}^{\sA\setminus\sA_0}$ such that $-u_3^\vee=m_\gamma$. Notice that $\gamma$ is the unique vector in $\eZ^{\sA}$ such that $m_\gamma=-u_3^\vee$.

Define two semi-groups
\[\Gamma=M\cap\overline{\sigma'^\vee\setminus\sigma^\vee},\quad \Gamma'=\Gamma\cap\Cone(u_3)^\perp.\]
We {\bf claim} that (iv') holds if and only if (iv'') : $\Gamma\setminus\Gamma'\subseteq(\sfS_{\sigma_0}\setminus 0)+\Gamma'$.
Assume that (iv') holds. For any $m\in\Gamma\setminus\Gamma'$, it is clear that $m-u_3^\vee\in\sfS_{\sigma'}$ since
\[\gen{m-u_3^\vee,u_3}=\gen{m,u_3}-1\geq 0.\]
Take a lifting $\alpha_0,\beta_0\in\Z_{\geq 0}^{\sA_0}$, 
$\alpha',\beta'\in\Z_{\geq 0}^{\sA'\setminus\sA_0}$ such that $m=m_{\alpha_0+\alpha'}$ and $m-u_3^\vee=m_{\beta_0+\beta'}$. Then $x^{\alpha_0+\gamma+\alpha'}-x^{\beta_0+\beta'}\in I_\sA+I_{\sA'}$ since 
$(\alpha_0+\gamma+\alpha')-(\beta_0+\beta')\in L_\sB$. By lemma \ref{gfbir_lm}, there exists a sequence 
$\{(\gamma_{0i},\gamma_i,\gamma_i')\}_{i=1}^m\subseteq\sfSp$
such that
\[(\gamma_{01},\gamma_1,\gamma_1')=(\alpha_0,\gamma,\alpha')\quad\text{and}\quad(\gamma_{0m},\gamma_m,\gamma_m')=(\beta_0,0,\beta'),\]
and for each $1\leq i\leq m-1$, either \eqref{relation1} or \eqref{relation2} holds. If $i$ is the smallest index such that $\gamma_{i+1}\neq \gamma$, then \eqref{relation2} must holds for $i$, that is,
\[(\gamma_{0i}+\gamma_i)-(\gamma_{0(i+1)}+\gamma_{i+1})\in L_\sA.\]
If $m_{\gamma_{0i}}=0$, then $m_{\gamma_{0(i+1)}}+m_{\gamma_{i+1}}=m_{\gamma_{i}}=-u_3^\vee$ in the face $\sigma_{\{12\}}^*$ of $\sigma^\vee$. This implies that $m_{\gamma_{0(i+1)}}\in\sigma_0^\vee\cap\sigma_{12}^*=\{0\}$ and $m_{\gamma_{i+1}}=-u_3^\vee$, and thus $\gamma_{i+1}=\gamma$, a contradiction. Hence $m_{\gamma_{\sigma_i}}\neq 0$. Since $m_{\gamma_{0i}}+m_{\gamma_i}+m_{\gamma_i'}$ is a constant, we have
\[m_{\alpha_0}+m_\gamma+m_{\alpha'}=m_{\gamma_{0i}}+m_{\gamma_i}+m_{\gamma_i'}\implies m=m_{\alpha_0+\alpha'}=m_{\gamma_{0i}}+m_{\gamma_i'}\in (\sfS_{\sigma_0}\setminus 0)+\Gamma.\]


Furthermore, let $m_1=m\in \Gamma\setminus\Gamma'$. Since $\Gamma\setminus\Gamma'\subseteq(\sfS_{\sigma_0}\setminus 0)+\Gamma$, there exists $m_2\in\Gamma$ such that $m_1-m_2\in(\sfS_{\sigma_0}\setminus 0)$. By induction, if $m_i\in\Gamma\setminus\Gamma'$, then there exists $m_{i+1}\in\Gamma$ such that $m_i-m_{i+1}\in(\sfS_\sigma\setminus 0)$. This process can be continuously done until $m_{i+1}$ no longer lies in $\Gamma\setminus\Gamma'$. Note that this process will terminate, since $(\sigma')^\vee$ is strongly convex.
Hence (iv'') holds. Conversely, we assume that (iv'') holds. It is clear that (iv'') is equivalent to
\begin{equation}\label{Gamma in Gamma'}
\Gamma\subseteq\sfS_{\sigma_0}+\Gamma'.
\end{equation}
For any $\alpha'\in\Z_{\geq 0}^{\sA'\setminus\sA_0}$ such that $m_{\alpha'}\in\Gamma$, we can decompose $m_{\alpha
'}=m_{\beta_0}+m_{\beta'}$ by \eqref{Gamma in Gamma'} and get
\begin{equation}\label{inI+I_eq1}
    x^{\alpha'}-x^{\beta_0+\beta'}\in I_{\sA'},
\end{equation}
where $m_{\beta_0}\in\sfS_{\sigma_0}$, $m_{\beta'}\in\Gamma'$ for some $\beta_0\in\eZ^{\sA_0}$ and $\beta'\in\eZ^{\sA'}$. Since
\[0<\gen{m_{\alpha'},u_3}=\gen{m_{\beta_0},u_3},\]
we still have $m_{\beta_0}-u_3^\vee\in\sfS_{\sigma_0}$ and $m_{\alpha'}-u_3^\vee\in\sfS_{\sigma'}$. Take a lifting $\widetilde{\beta}_0\in\Z_{\geq 0}^{\sA_0}$, $\alpha''\in\Z_{\geq 0}^{\sA'}$ such that $m_{\widetilde{\beta}_0}=m_{\beta_0}-u_3^\vee$ and $m_{\alpha''}=m_{\alpha'}-u_3^\vee$. Then $m_{\alpha''}=m_{\widetilde{\beta}_0}+m_{\beta'}$, and thus
\begin{equation}\label{inI+I_eq2}
    x^{\beta_0+\gamma}-x^{\widetilde{\beta}_0}\in I_\sA, \quad x^{\alpha''}-x^{\widetilde{\beta}_0+\beta'}\in I_{\sA'}.
\end{equation}
From \eqref{inI+I_eq1} and \eqref{inI+I_eq2}, we conclude
\begin{equation}\label{in I+I}
x^{\gamma+\alpha'}-x^{\alpha''}\in I_\sA+I_{\sA'}.
\end{equation}
Now, suppose that (iv') is false, say, there exists a relation 
\[(\alpha_0+k_1\gamma+\alpha')-(\beta_0+k_2\gamma+\beta')\in L_\sB\]
such that $x^{\alpha_0+k_1\gamma+\alpha'}-x^{\beta_0+k_2\gamma+\beta'}\notin I_\sA+I_{\sA'}$ for some $\alpha_0$, $\beta_0\in\Z_{\geq 0}^{\sA_0}$, $\alpha'$, $\beta'\in\Z_{\geq 0}^{\sA'}$ and $k_1$, $k_2\in\Z_{\geq 0}$. We may assume that $k_2=0$ by eliminating $\min\{k_1,k_2\}y$, and $k_1\geq 0$ is the smallest integer such that the above relation holds (note that $k_1\neq 0$). We find that $m_{\alpha'}\in\Gamma'$, otherwise, let $\alpha''\in\Z_{\geq 0}^{\sA'}$ such that $m_{\alpha''}=m_{\alpha'}+m_\gamma\in\Gamma\setminus\Gamma'$. Then the binomial corresponds to the new relation
\[(\alpha_0+(k_1-1)\gamma+\alpha'')-(\beta_0+\beta')\in L_\sB\]
is not in $I_\sA+I_{\sA'}$ by \eqref{in I+I}, which leads to a contradiction of the minimality of $k_1$. Note that $m_{\alpha_0+k_1\gamma}\in S_{\sigma_0}$ since $m_{\alpha'}\in\overline{\Gamma}$ implies
\[\gen{m_{\alpha_0+k_1\gamma},u_3}=\gen{m_{\alpha_0+k_1\gamma}+m_{\alpha'},u_3}=\gen{m_{\beta_0+\beta'},u_3}\geq 0.\]
Then there exists $\widetilde{\alpha}_0\in\Z_{\geq 0}^{\sA_0}$ such that $m_{\widetilde{\alpha}_0}=m_{\alpha_0+k_1\gamma}$. Then we get the contradiction from
\[x^{\alpha_0+k_1\gamma}-x^{\widetilde{\alpha}_0}\in I_\sA,\quad x^{\widetilde{\alpha}_0+\alpha'}-x^{\beta_0+\beta'}\in I_{\sA'}\implies x^{\alpha_0+k_1\gamma+\alpha'}-x^{\beta_0+\beta'}\in I_\sA+I_{\sA'}\]
and thus (iv') is true. Now we have finished the {\bf proof of our claim}.

Finally, it suffices to show that \eqref{Gamma in Gamma'} is equivalent to the condition \eqref{n=3_reduced_equiv_2}. Given $m=(-p_1,p_2,p_3)\in\Gamma$, that is, $p_1$, $p_3\geq 0$ and $-b_2p_2\geq -b_1p_1+b_3p_3$, we want to find a lattice point $m'=(-q_1,q_2,0)\in\Gamma'$ such that $m-m'=(q_1-p_1,p_2-q_2,p_3)\in S_{\sigma_0}$, that is, there exist $q_1$, $q_2\geq 0$ such that
\begin{equation}\label{v.1}
-b_2q_2\geq -b_1q_1,\quad q_1\geq p_1,\quad q_2\leq p_2,\quad k\coloneqq b_1p_1-b_2p_2-b_3p_3\geq -b_2q_2+b_1q_1.  
\end{equation}
Note that if $(q_1,q_2)$ satisfies \eqref{v.1}, then $(q_1,\lceil\frac{b_1q_1}{b_2}\rceil)$ satisfies \eqref{v.1}. Thus, \eqref{v.1} is equivalent to the existence of an integer $q_1\in[p_1,b_2p_2/b_1]$ such that
\[k\geq -b_2\left\lceil\dfrac{b_1q_1}{b_2}\right\rceil+b_1q_1=g\cdot \{-b_1'q_1\}_{b_2'}.\]
Consider the following statement $P(p_1,p_2):$ 
\[\text{there exists } p_1\leq q_1\leq\dfrac{b_2'p_2}{b_1'} \text{ such that }\{g(b_2'p_2-b_1'p_1)\}_{b_3}\geq g\cdot\{-b_1'q_1\}_{b_2'}.\]
Hence, \eqref{Gamma in Gamma'} is also equivalent to $P(p_1,p_2)$ holds for all $p_1,p_2>0$ with $b_2'p_2\geq b_1'p_1$. Note that if $b_2'p_2/b_1'-p_1\geq b_2'$, then there exists an integer $q_1\in[p_1,b_2'p_2/b_1']$ such that $b_2'|q_1$, ensuring that the above inequality holds. Therefore, we only need to check it for $\lambda\coloneqq b_2'p_2-b_1'p_1\leq b_1'b_2'$. We take $n_1,n_2\in\N$ such that $b_2'n_2-b_1'n_1=1$. Since $P(p_1,p_2)$ holds if and only if $P(p_1+b_2',p_2+b_1')$ holds, we only need to check that $(p_1,p_2)=(\lambda n_1,\lambda n_2)$ for $0\leq \lambda\leq b_1'b_2'$. Let $q_1=\lambda n_1+y$. Clearly, $P(\lambda n_1,\lambda n_2)$ can be reformulated as the existence of
\[0\leq y\leq \dfrac{b_2'\cdot \lambda n_2}{b_1'}-\lambda n_1=\dfrac{\lambda}{b_1'}\]
such that
\[\{g\lambda\}_b\geq g\cdot\{-b_1'(\lambda n_0+y)\}_{b_2'}=g\cdot \{\lambda-b_1'y\}_{b_2'}.\]
This establishes the equivalence \eqref{n=3_reduced_equiv_1}$\Leftrightarrow$\eqref{n=3_reduced_equiv_2}.
\end{proof}



\begin{rmk}\label{U31 iff U32}
Notice that the condition \eqref{n=3_reduced_equiv_2} is symmetric with respect to $b_1$ and $b_2$. Indeed, given $0\leq\lambda\leq b_1'b_2'$, let $0\leq y_1\leq\lambda/b_1'$ satisfy the inequality \eqref{reduced_ineq}, that is, there exists $0\leq y_2\leq\lambda/b_2'$ such that
\[\lambda-y_1b_1'-y_2b_2'=\{\lambda-y_1b_1'\}_{b_2'}.\]
Then the inequality
\[\{\lambda-y_2b_2'\}_{b_1'}=\{\{\lambda-y_1b_1'\}_{b_2'}\}_{b_1'}\leq\{\lambda-y_1b_1'\}_{b_2'}\leq\{g\lambda\}_{b_3}\]
as required. Hence we conclude that $U_{31}$ is reduced if and only if $U_{32}$ is reduced.
\end{rmk}

In fact, we can achieve a stronger result. 
\begin{thm}\label{r=1 reduced}
$X$ is reduced if and only if $U_{31}$ is reduced.
\end{thm}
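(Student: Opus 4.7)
The forward direction is immediate: $U_{31}$ is an open subscheme of $X$, so reducedness of $X$ forces reducedness of $U_{31}$. For the reverse, $X$ is covered by the four affine fiber products $U_{ji}$ with $(j,i)\in\{3,4\}\times\{1,2\}$, and reducedness is a local property. Remark~\ref{U31 iff U32} gives $U_{31}$ reduced if and only if $U_{32}$ reduced, and the same argument with the roles of $u_1$ and $u_2$ exchanged yields $U_{41}$ reduced if and only if $U_{42}$ reduced. Hence the substance of the theorem is the single implication $U_{31}$ reduced $\Longrightarrow U_{41}$ reduced.

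To prove this, I plan to adapt the combinatorial machinery of Lemma~\ref{n=3_reduced_equiv} to $U_{41}$. The crucial structural input is that, since $\sigma_4=\Cone(u_1,u_2,u_3)$ is smooth (as $b_4=1$), one may take $\sA_4=\sA_0\cup\{u_3^\vee\}$, making $\sA_4\setminus\sA_0$ consist of a single element $u_3^\vee$, the sign-reversed mirror of $\sA_3\setminus\sA_0=\{-u_3^\vee\}$. I then define $\Gamma_{41}=M\cap\overline{\sigma_1^\vee\setminus\sigma_4^\vee}$ and $\Gamma_{41}'=\Gamma_{41}\cap\Cone(u_3)^\perp$. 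A direct inspection of the dual cones shows that both $\sigma_1^\vee\setminus\sigma_3^\vee$ and $\sigma_1^\vee\setminus\sigma_4^\vee$ consist of precisely the points in $\sigma_1^\vee$ with negative $u_1^\vee$-coordinate, so $\Gamma_{41}=\Gamma_{31}$ and $\Gamma_{41}'=\Gamma_{31}'$. Running the same descent via Lemma~\ref{gfbir_lm}, the reducedness of $U_{41}$ should be captured by the very same semigroup inclusion $\Gamma\subseteq\sfS_{\sigma_0}+\Gamma'$ that characterizes the reducedness of $U_{31}$.

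The main obstacle I anticipate is the sign flip at the heart of the descent step. In the proof of Lemma~\ref{n=3_reduced_equiv}, the pivotal move $m\mapsto m-u_3^\vee$ preserves $\sfS_{\sigma_1}$ for $m\in\Gamma\setminus\Gamma'$ because such $m$ satisfy $\langle m,u_3\rangle>0$. For $U_{41}$ the analogous shift $m\mapsto m+u_3^\vee$ does not in general preserve $\sfS_{\sigma_1}$ (the constraint $\langle\,\cdot\,,u_4\rangle\geq 0$ can fail). To compensate, one routes the descent through $\sfS_{\sigma_4}=\eZ^3$: the transparent description of $\sfS_{\sigma_4}$ afforded by smoothness lets us add a suitable element of $\sfS_{\sigma_0}$ before the shift, landing back in $\sfS_{\sigma_1}$. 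Tracking this more carefully in the chains produced by Lemma~\ref{gfbir_lm} should recover the same arithmetic condition as in Lemma~\ref{n=3_reduced_equiv}, closing the implication and hence the theorem.
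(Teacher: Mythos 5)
Your reduction of the theorem to the single implication ``$U_{31}$ reduced $\Rightarrow U_{41}$ reduced'' is sound: the forward direction is trivial, the four affine pieces cover $X$, Remark~\ref{U31 iff U32} handles $U_{31}\Leftrightarrow U_{32}$, and once the implication $U_{31}\Rightarrow U_{41}$ is known for every wall relation, applying it to the configuration with $b_1,b_2$ exchanged (which swaps $\sigma_1\leftrightarrow\sigma_2$ and fixes $\sigma_3,\sigma_4$) together with Remark~\ref{U31 iff U32} also yields $U_{31}\Rightarrow U_{42}$. The set-theoretic observation $\overline{\sigma_1^\vee\setminus\sigma_4^\vee}=\overline{\sigma_1^\vee\setminus\sigma_3^\vee}$ is also correct. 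The problem is that everything after that point is not a proof but a statement of intent, and the step you defer is precisely the entire content of the theorem.

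Concretely: for $U_{31}$ the descent in Lemma~\ref{n=3_reduced_equiv} works because the extra generator $m_\gamma=-u_3^\vee$ of $\sfS_{\sigma_3}$ over $\sfS_{\sigma_0}$ satisfies $\langle -u_3^\vee,u_2\rangle=0$, $\langle -u_3^\vee,u_4\rangle=b_3>0$ and $\langle -u_3^\vee,u_3\rangle=-1$, so $m+m_\gamma\in\sfS_{\sigma_1}$ exactly when $m\notin\Cone(u_3)^\perp$; this is why $\Gamma'=\Gamma\cap\Cone(u_3)^\perp$ is the right stopping face and why the criterion $\Gamma\subseteq\sfS_{\sigma_0}+\Gamma'$ emerges. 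For $U_{41}$ the extra generator is $+u_3^\vee$ with $\langle u_3^\vee,u_4\rangle=-b_3$, so the move is obstructed on the \emph{thickened} region $\{m:0\le\langle m,u_4\rangle<b_3\}$, which is not a face of $\sigma_1^\vee$ unless $b_3=1$; correspondingly the natural stopping set is governed by $\Cone(u_4)^\perp$, not $\Cone(u_3)^\perp$ (compare the choice $\Gamma'=\Gamma\cap\Cone(u_{n+1})^\perp$ in the proof of Theorem~\ref{smooth is redcued equivalent}, where the hypothesis $b_j=1$ for $j\in J_+$ is used exactly at this point). So your assertion that ``the very same semigroup inclusion'' characterizes reducedness of $U_{41}$ is not established, and your proposed repair --- ``routing the descent through $\sfS_{\sigma_4}=\eZ^3$'' and ``tracking this more carefully'' --- is exactly the part that would have to be carried out; as written, the argument stops at the acknowledged obstacle. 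Note also that this is genuinely delicate: e.g.\ for $(b_1,b_2,b_3,b_4)=(-3,-5,7,1)$ one can exhibit an isolated representation class over $s=-u_1^\vee+2u_2^\vee+2u_3^\vee$ in $U_{41}$, so the combinatorics of $U_{41}$ really does feel the value of $b_3$ and is not a formal mirror of $U_{31}$. The paper avoids this entirely by a different mechanism: Lemma~\ref{split} proves $\pi_*\sI=R^i\pi_*\sI=0$ for the nilradical $\sI$ via a \v{C}ech and $\Tor$ computation, and then, once $U_{31}$ and $U_{32}$ are reduced, the support of $\sI$ is confined to a single fiber isomorphic to $\P^1$ which is a section of $\pi$, so $\pi_*\sI=0$ forces $\sI=0$. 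If you want to keep your purely combinatorial route, you must actually derive the reducedness criterion for $U_{41}$ with general $b_3$ and prove it is implied by condition \eqref{n=3_reduced_equiv_2} of Lemma~\ref{n=3_reduced_equiv}; until then the proof has a genuine gap.
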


This result is based on the following lemma.

\begin{lm}\label{split}
Let $\pi\colon X = X_{\Sigma} \times_{X_{\Sigma_0}} X_{\Sigma'} \to X_{\Sigma}$ be the first projection and $\sI$ be the nilradical ideal of $\O_X$. Then the following vanishing results hold for the 3-dimensional case : 
\[\pi_*\mathscr{I}=R^i\pi_*\mathscr{I}=R^i\pi_*\O_X=0\] 
for all $i>0$, and $\pi_*\O_X=\O_{X_{\Sigma}}$. 

Also, the similar results hold for the second projection $\pi':X\to X_{\Sigma'}$.
\end{lm}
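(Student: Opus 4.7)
The plan is to work locally on the target $X_\Sigma$ and reduce the vanishing statements to a two-term Čech computation made possible by the three-dimensional hypothesis, under which $\pi^{-1}(U_{\sigma_j})$ is covered by exactly two affines. The consequences for $\mathscr{I}$ will then be extracted from the long exact sequence associated to $0 \to \mathscr{I} \to \cO_X \to \cO_{X_{\red}} \to 0$.

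The initial reduction is harmless: for maximal cones $\sigma \in \Sigma$ not containing the wall $\sigma_{J_-}$, both $\phi_{\cR}$ and hence $\pi$ are isomorphisms over $U_\sigma$, so the claims hold trivially there. I may thus assume $X_{\Sigma_0} = U_{\sigma_0}$ and fix a maximal cone $\sigma_j$ of $\Sigma$ with $j \in J_+$. Set $A_0 = \bC[\sfS_{\sigma_0}]$, $A_j = \bC[\sfS_{\sigma_j}]$, $A_i = \bC[\sfS_{\sigma_i}]$ for $i \in J_- = \{1,2\}$, and $A_{\{3,4\}} = \bC[\sfS_{\sigma_{\{3,4\}}}]$. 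On $\pi^{-1}(U_{\sigma_j}) = U_{j1} \cup U_{j2}$, Čech cohomology for a quasi-coherent sheaf $\mathcal{F}$ collapses to the two-term complex $\mathcal{F}(U_{j1}) \oplus \mathcal{F}(U_{j2}) \to \mathcal{F}(U_{j1} \cap U_{j2})$, and since the fibers of $\pi$ have dimension at most $1$, $R^i\pi_*$ vanishes for $i \geq 2$ automatically.

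The claims $\pi_*\cO_X = \cO_{X_\Sigma}$ and $R^1\pi_*\cO_X = 0$ therefore amount to the exactness of
\[
    0 \to A_j \to A_j \otimes_{A_0} A_1 \oplus A_j \otimes_{A_0} A_2 \to A_j \otimes_{A_0} A_{\{3,4\}} \to 0.
\]
The same Čech recipe applied to $\phi' \colon X_{\Sigma'} \to U_{\sigma_0}$, combined with the standard toric vanishing $R^i \phi'_* \cO_{X_{\Sigma'}} = 0$ for $i > 0$ (valid because source and target are normal toric, hence have rational singularities), yields the short exact sequence $0 \to A_0 \to A_1 \oplus A_2 \to A_{\{3,4\}} \to 0$ of $A_0$-modules. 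The crucial point is that $\sigma_{\{3,4\}} = \sigma_1 \cap \sigma_2$ is a \emph{face} of $\sigma_0$, so $U_{\sigma_{\{3,4\}}}$ is an affine open in $U_{\sigma_0}$ and $A_{\{3,4\}}$ is a localization of $A_0$ at a single character. Localizations are flat, so $\Tor_1^{A_0}(A_j, A_{\{3,4\}}) = 0$, and tensoring the base sequence with $A_j$ over $A_0$ preserves exactness.

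For the nilradical, Theorem~\ref{fpred_thm} identifies $X_{\red} = \Graphc$, a proper birational scheme over the normal variety $X_\Sigma$ with connected fibers by Stein factorization, so $\pi_*\cO_{X_{\red}} = \cO_{X_\Sigma}$ and the composition $\cO_{X_\Sigma} = \pi_*\cO_X \to \pi_*\cO_{X_{\red}} = \cO_{X_\Sigma}$ is the identity (it agrees with the identity on $T_N$). Plugging the $\cO_X$ vanishings into the long exact sequence of higher direct images collapses it to
\[
    0 \to \pi_*\mathscr{I} \to \cO_{X_\Sigma} \xrightarrow{\mathrm{id}} \cO_{X_\Sigma} \to R^1\pi_*\mathscr{I} \to 0 \to R^1\pi_*\cO_{X_{\red}} \to 0,
\]
which forces $\pi_*\mathscr{I} = R^1\pi_*\mathscr{I} = 0$; the argument for the second projection $\pi'$ is symmetric. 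The main obstacle is essentially dimensional: the length-one Čech complex and the coincidence that $\sigma_1 \cap \sigma_2$ is a face of $\sigma_0$ are features of the three-dimensional case that make the whole proof collapse cleanly, while a higher-dimensional analogue would have to control higher $\Tor$'s against the non-face intersections appearing in the longer Čech complex.
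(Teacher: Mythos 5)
Your overall architecture is reasonable and partly parallels the paper's: the reduction to the two\mbox{-}term \v{C}ech complex on the cover $\{U_{j1},U_{j2}\}$, the base short exact sequence $0\to A_0\to A_1\oplus A_2\to A_{\{3,4\}}\to 0$ (which the paper derives directly from $(\sigma_1\cap\sigma_2)^\vee=\sigma_1^\vee\cup\sigma_2^\vee$, as in \eqref{split cone sequence}, rather than from $R\phi'_*\cO=\cO$), and the extraction of the $\mathscr{I}$-statements from the long exact sequence of $0\to\mathscr{I}\to\cO_X\to\cO_{X_{\red}}\to 0$. But the pivotal step is false: $\sigma_1\cap\sigma_2=\Cone(u_3,u_4)=\sigma_{J_+}$ is \emph{not} a face of $\sigma_0$. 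If some $m\in\sigma_0^\vee$ vanished on $u_3,u_4$ and were strictly positive on $u_1,u_2$, the wall relation $b_1u_1+b_2u_2+b_3u_3+b_4u_4=0$ with $b_1,b_2<0$ would force $b_1\langle m,u_1\rangle+b_2\langle m,u_2\rangle=0$ with a strictly negative left-hand side. In fact $\Cone(u_3,u_4)$ is the interior wall of $\Sigma'$ whose orbit closure is the flipped curve; the map $U_{\sigma_1\cap\sigma_2}\to U_{\sigma_0}$ contracts a curve to a point, hence is neither an open immersion nor flat. So $A_{\{3,4\}}$ is not a localization of $A_0$, the claim $\Tor_1^{A_0}(A_j,A_{\{3,4\}})=0$ is unjustified, and tensoring the base sequence with $A_j$ only yields right-exactness --- that gives $R^1\pi_*\cO_X=0$ (which is how the paper proves the surjectivity of \eqref{Cech2}) but not $\pi_*\cO_X=\cO_{X_\Sigma}$.

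That missing exactness on the left and in the middle is precisely where the content of the lemma sits: a priori $\pi_*\cO_X$ could be strictly larger than $\cO_{X_\Sigma}$ because $X$ may be non-reduced. The paper deals with this by (a) proving surjectivity of the \v{C}ech map \eqref{Cech1} for $\mathscr{I}$ through an explicit analysis of the binomial generators of $I_{\sA\cup\sA_1'\cup\sA_2'}/(I_\sA+I_{\sA_1'+\sA_2'})$, so that $R^1\pi_*\mathscr{I}=0$ and the sequence $0\to\pi_*\mathscr{I}\to\pi_*\cO_X\to\pi_*\cO_{X_{\red}}\to 0$ is exact and splits via the functorial section $\cO_{X_\Sigma}\to\pi_*\cO_X$; and (b) identifying $\ker\alpha$ as a \emph{quotient} of $\C[\sfS_{\sigma_j}]$ by the image of $\Tor_1^{\C[\sfS_{\sigma_0}]}(\C[\sfS_{\sigma_1\cap\sigma_2}],\C[\sfS_{\sigma_j}])$, which is incompatible with a nonzero direct summand $\Gamma(\pi_*\mathscr{I},U_{\sigma_j})$. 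To repair your argument you would have to control the image of that $\Tor_1$ in $A_j$ (or equivalently prove exactness of the tensored sequence by hand), not assert that the $\Tor$ group vanishes by flatness; note also that without this your long-exact-sequence step for $\pi_*\mathscr{I}=R^1\pi_*\mathscr{I}=0$ has no input, since it presupposes $\pi_*\cO_X=\cO_{X_\Sigma}$.
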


\begin{proof}
Since $\pi$ is a proper morphism with fiber dimension $\leq 1$, by the formal function theorem, $R^i\pi_*\mathscr{I}=R^i\pi_*\O_X=0$ for all $i>1$. For $i=1$, it suffices to show that
\[H^1(\pi^{-1}(U_{\sigma_j}),\mathscr{I})=H^1(\pi^{-1}(U_{\sigma_j}),\O_X)=0, \text{ for } j=3,4.\]
Since $\pi^{-1}(U_{\sigma_j})$ is covered by $\left\{U_{jk}\coloneqq U_{\sigma_j}\times_{U_\sigma} U_{\sigma_k}\right\}_{k=1,2}$, by \v{C}ech cohomology, it suffices to show that
\begin{align}\label{Cech1}
H^0(U_{j1},\mathscr{I})\oplus H^0(U_{j2},\mathscr{I}) & \longrightarrow H^0(U_{j1}\cap U_{j2},\mathscr{I})=H^0(U_{\sigma_j}\times_{U_{\sigma_0}}U_{\sigma_1\cap\sigma_2},\mathscr{I})\\
\label{Cech2}
H^0(U_{j1},\O_X)\oplus H^0(U_{j2},\O_X) & \longrightarrow H^0(U_{j1}\cap U_{j2},\O_X)=H^0(U_{\sigma_j}\times_{U_{\sigma_0}}U_{\sigma_1\cap\sigma_2},\O_X) 
\end{align}
are surjective. 

By the relation $b_1u_1+b_2u_2+b_3u_3+b_4u_4=0$, we have $(\sigma_1\cap\sigma_2)^\vee= \sigma_1^{\vee}\cup\sigma_2^{\vee}$, and thus
\begin{equation}\label{split cone sequence}
\begin{matrix}
0\to\C[\sigma_1^{\vee}\cap\sigma_2^{\vee}\cap M] & \longrightarrow & \C[\sigma_1^{\vee}\cap M]\oplus \C[\sigma_2^{\vee}\cap M] & \longrightarrow & \C[(\sigma_1\cap\sigma_2)^\vee\cap M]\to 0 \\
& & (\chi^{m_1},\chi^{m_2}) & \longmapsto &\chi^{m_1}-\chi^{m_2}
\end{matrix}
\end{equation}
Since \eqref{Cech2} is equal to the last morphism in \eqref{split cone sequence} tensored by $\C[\sigma_j^\vee\cap M]$ over $\C[\sigma_0^\vee\cap M]$, we conclude that \eqref{Cech2} is surjective.

We modify the notation in Theorem \ref{gfbir_prop} as follows:
\[\sfS_{\sigma_0}=\Z_{\geq 0}\sA_0,\quad \sfS_{\sigma_j}=\Z_{\geq 0}\sA,\quad \sfS_{\sigma_k}=\Z_{\geq 0}\sA'_k\quad \text{ for } k=1,2.\]
Since $\sfS_{\sigma_1\cap\sigma_2}=\sfS_{\sigma_1}+\sfS_{\sigma_2}=\Z_{\geq 0}(\sA_1'\cup\sA_2')$ and $\sigma_0=\sigma_j+(\sigma_1\cap\sigma_2)$, according to Remark \ref{gfbir_rmk} we get the equality
\[\sqrt{I_{\sA}+I_{\sA_1'\cup\sA_2'}}=I_{\sA\cup(\sA_1'\cup\sA_2')}\]
in $\C[\Z_{\geq 0}^{\sA\cup\sA_1'\cup\sA_2'}]$, and thus
\[H^0(U_{\sigma_j}\times_{U_{\sigma_0}}U_{\sigma_1\cap \sigma_2},\mathscr{I})=\quotient{I_{\sA\cup\sA_1'\cup\sA_2'}}{I_\sA+I_{\sA'_1+\sA'_2}}\]
is generated by $\overline{x^{\alpha+\alpha'}-x^{\beta+\beta'}}$, where $\alpha,\beta\in\Z_{\geq 0}^{\sA\setminus\sA_0}$, $\alpha',\beta'\in\Z_{\geq 0}^{\sA_1'\cup\sA_2'}$ with $(\alpha+\alpha')-(\beta+\beta')\in L_{\sA\cup\sA'_1\cup\sA'_2}$.

If $m_{\alpha'}$, $m_{\beta'}\in S_{\sigma_k}$ for some $k\in\{1,2\}$, say $\alpha''$, $\beta''\in\Z_{\geq 0}^{\sA_k'}$ such that $m_{\alpha'}=m_{\alpha''}$ and $m_{\beta'}=m_{\beta''}$, then
\[x^{\alpha+\alpha'}-x^{\beta+\beta'}\equiv x^{\alpha+\alpha''}-x^{\beta+\beta''} \pmod{I_{\sA_1'+\sA_2'}},\]
and $(-1)^{k-1}\overline{x^{\alpha+\alpha''}-x^{\beta+\beta''}}\in I_{\sA\cup\sA_k'}/(I_{\sA}+I_{\sA_k'})$ maps to $\overline{x^{\alpha+\alpha'}-x^{\beta+\beta'}}\in I_{\sA\cup\sA_1'\cup\sA_2'}/(I_{\sA}+I_{\sA_1'+\sA_2'})$.

If not, we may assume that $m_{\alpha'}\in \sfS_{\sigma_1}$ and $m_{\beta'}\in \sfS_{\sigma_2}$. Since 
\[m_{\alpha}+m_{\alpha'}=m_\beta+m_{\beta'}\in (\sfS_{\sigma_j}+\sfS_{\sigma_1})\cap(\sfS_{\sigma_j}+\sfS_{\sigma_2})\subseteq \sfS_{\sigma_j\cap\sigma_1}\cap \sfS_{\sigma_j\cap \sigma_2}\subseteq \sfS_{\sigma_j},\]
there exist $\gamma_0\in\Z_{\geq 0}^{\sA_0}$ and $\gamma\in\Z_{\geq 0}^{\sA\setminus\sA_0}$ such that
\[m_{\gamma_0}+m_{\gamma}=m_{\alpha}+m_{\alpha'}=m_\beta+m_{\beta'}.\]
Then
\[(\overline{x^{\alpha+\alpha'}-x^{\gamma_0+\gamma}},\overline{x^{\beta+\beta'}-x^{\gamma_0+\gamma}})\mapsto\overline{x^{\alpha+\alpha'}-x^{\beta+\beta'}},\]
under the morphism $H^0(U_{j1},\O_X)\oplus H^0(U_{j2},\O_X)\to H^0(U_{j1}\cap U_{j2},\O_X)$. Hence, we conclude that \eqref{Cech1} is surjective.

Note that
\begin{equation}\label{pushforward of nilpotent}
0\to\pi_*\mathscr{I}\to\pi_*\O_X\to\pi_*\O_{X_{\operatorname{red}}}\to R^1\pi_*\mathscr{I}=0.
\end{equation}
By Corollary \ref{gftoric_cor} and Theorem \ref{fpred_thm}, $X_{\operatorname{red}}=X_{\widetilde{\Sigma}}$ is integral. By the proof of Zariski main theorem in \cite{Hartshorne}, $\pi_*\O_{X_{\operatorname{red}}}=\O_{X_{\Sigma}}$. By functoriality, the morphism between structure sheaves $\O_{X_{\Sigma}}\to\pi_*\O_X$ gives a lifting of \eqref{pushforward of nilpotent}, and thus the short exact sequence \eqref{pushforward of nilpotent} splits.

Now the remaining part is to prove that $\pi_*\mathscr{I}=0$. By definition and $R^1\pi_*\O_X=0$, we have
\[0\to\O_X(\pi^{-1}U_{\sigma_j})\to\O_X(U_{j1})\oplus\O_X(U_{j2})\to\O_X(U_{\sigma_j}\times_{U_{\sigma_0}}U_{\sigma_1\cap\sigma_2})\to 0,\]
or rewrite it as 
\[
0\xrightarrow{\phantom{\alpha}} \C[\sfS_{\sigma_j}]\oplus\Gamma(\pi_*\mathscr{I},U_{\sigma_j}) \xrightarrow{\phantom{\alpha}} \C[\sfS_{\sigma_j}]\otimes_{\C[\sfS_{\sigma_0}]}\left(\C[\sfS_{\sigma_1}]\oplus \C[\sfS_{\sigma_2}]\right) 
\xrightarrow{\alpha} \C[\sfS_{\sigma_j}]\otimes_{\C[\sfS_{\sigma_0}]}\C[\sfS_{\sigma_1\cap\sigma_2}]\to 0, 
\]
since $\pi_*\O_X=\pi_*\mathscr{I}\oplus\O_{X_{\Sigma}}$. From the long exact sequence induced by $\eqref{split cone sequence}\otimes_{\C[\sfS_{\sigma_0}]}\C[\sfS_{\sigma_j}]$, the kernel of $\alpha$ is 
\[\quotient{\C[\sfS_{\sigma_j}]}{\ \operatorname{Im}\left(\Tor_1^{\C[\sfS_{\sigma_0}]}\left(\C[\sfS_{\sigma_1\cap\sigma_2}],\C[\sfS_{\sigma_j}]\right)\to \C[\sfS_{\sigma_j}]\right)}.\]
So we can conclude that $\Gamma(\pi_*\mathscr{I},U_{\sigma_j})=0$. Hence $\pi_*\mathscr{I}|_{U_{\sigma_j}}=0$ for $j=1$, $2$, that is, $\pi_*\mathscr{I}=0$.
\end{proof}

\begin{proof}[Proof of Theorem~\ref{r=1 reduced}]
Suppose that $U_{31}$ is reduced. 
Recall that $J_{-} = \{1, 2\}$ and $J_{+} = \{3, 4\}$. Let $Z = V(\sigma_{J_{-}})$, $Z' = V(\sigma_{J_{+}})$ and $S = V(\sigma_{J_{-} \cup J_{+}})$. By Remark~\ref{U31 iff U32}, $U_{32}$ is also reduced, and thus
\[\operatorname{Supp}\mathscr{I}\subseteq(Z\times_S Z')_{\operatorname{red}}\setminus\left(U_{31}\cup U_{32}\right)=(\pi')^{-1}(p),\]
where $p$ is the unique point in $V(\sigma_1\cap\sigma_2)\setminus U_{\sigma_1}=V(\sigma_2)$. Note that $(Z\times_SZ')_{\operatorname{red}}=\P^1\times\P^1$, and $\pi|_{\P^1\times\P^1}$ and $\pi'|_{\P^1\times\P^1}$ are projection onto each component. We take a section $s\colon Z=\P^1\xrightarrow{\sim} (\pi')^{-1}(p)_{\operatorname{red}}=\P^1$ such that $\pi\circ s=\operatorname{id}_Z$. Let $\mathscr{F}=(s^{-1})_*\iota'^{-1}\mathscr{I}$ be the sheaf of abelian groups on $Z$, where $\iota' \colon (\pi')^{-1}(p)_{\operatorname{red}}\inclusion X$. Since $\mathscr{I}$ has the support on $(\pi')^{-1}(p)_{\operatorname{red}}$ and by Lemma~\ref{split}, we have
\[\iota_*\mathscr{F}=\pi_*\iota'_*s_*(s^{-1})_*\iota'^{-1}\mathscr{I}=\pi_*\mathscr{I}=0,\]
where $\iota\colon Z\inclusion X_{\Sigma}$. This implies $\mathscr{F}=0$, and thus $\mathscr{I}=0$, i.e., $X$ is reduced. 
\end{proof}

\section{Applications}

\subsection{About terminal and canonical singularities}

Let $\phi \colon X_{\Sigma} \to X_{\Sigma_0}$ be the flipping contraction of an extremal ray $\cR$ and $\phi' \colon X_{\Sigma'} \to X_{\Sigma_0}$ be its corresponding flip. We can apply the results in Section \ref{fp_sec} to study $X = X_{\Sigma}\times_{X_{\Sigma_0}}X_{\Sigma'}$ when $X_{\Sigma}$ is a $3$-dimensional simplicial toric variety with at worst terminal or canonical singularities.

\begin{theorem}\label{term3flip_thm}
Let $X_{\Sigma}$ be a $3$-dimensional simplicial toric variety with at worst terminal singularities. If  $K_{X_\Sigma} \cdot \cR\leq 0$, then the fiber product $X = X_{\Sigma}\times_{X_{\Sigma_0}}X_{\Sigma'}$ is the toric variety $X_{\tSigma}$ defined by the fan \eqref{fp_fan}.
\end{theorem}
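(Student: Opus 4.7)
The plan is to combine the reductions of Section \ref{fp_sec} with Reid's classification of $3$-dimensional terminal cyclic quotient singularities, and then verify the numerical criterion of Lemma \ref{n=3_reduced_equiv} by an explicit choice of the witness $y$.

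First I would apply Proposition \ref{fpcri_prop}: the identity $X = X_{\tSigma}$ is equivalent to $X$ being irreducible and reduced together with $\Graphc$ being normal. Theorem \ref{fpred_thm} gives irreducibility and identifies $X_{\red}$ with $\Graphc$. Since $X_\Sigma$ is simplicial and terminal, normalizing the wall relation gives $b_4 = 1$ (because $\sigma_{\{1,2,3\}}$ is smooth), so Corollary \ref{gftoric_cor}(1) provides the normality of $\Graphc$. By Theorem \ref{r=1 reduced}, it then suffices to prove that $U_{31}$ is reduced; via Lemma \ref{n=3_reduced_equiv}, this reduces to producing, for every $0 \leq \lambda \leq b_1' b_2'$, a non-negative integer $y \leq \lambda / b_1'$ with $\{g \lambda\}_{b_3} \geq g \cdot \{\lambda - b_1' y\}_{b_2'}$.

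Next I would extract arithmetic constraints from terminality. The cone $\sigma_3 = \Cone(u_1, u_2, u_4)$ gives the cyclic quotient $\frac{1}{b_3}(g b_1', g b_2', -1)$, and Reid's classification says such a $3$-dimensional quotient is terminal precisely when two of the weights $a_i, a_j$ satisfy $a_i + a_j \equiv 0 \pmod{b_3}$ with $\gcd(a_i, b_3) = 1$ (so that a suitable unit scaling puts the weights in Reid's normal form $\frac{1}{b_3}(1, a, b_3 - a)$). Coupling this with the inequality $K_{X_\Sigma} \cdot \cR \leq 0$, equivalently $b_3 \geq g(b_1' + b_2') - 1$, and carrying out a case analysis on which pair of weights sums to zero, one concludes that, up to swapping $b_1$ and $b_2$, we are in one of two situations: case (A), where $g = 1$ and $b_3 = b_1' + b_2'$; or case (B), where $b_1 = -1$ (so that $g = b_1' = 1$) and $b_3 \geq b_2'$ is otherwise arbitrary with $\gcd(b_2', b_3) = 1$. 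The trivial smooth subcase $b_1 = b_2 = -1$, $b_3 = 1$ lies inside (B).

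Finally I would verify the numerical inequality in each case. In case (A), writing $\lambda = b_3 q + r$ with $0 \leq r < b_3 = b_1' + b_2'$, the choice $y = q$ satisfies $b_1' y = b_1' q \leq b_3 q \leq \lambda$, and
\begin{equation*}
    \{\lambda - b_1' y\}_{b_2'} = \{b_2' q + r\}_{b_2'} = \{r\}_{b_2'} \leq r = \{\lambda\}_{b_3}.
\end{equation*}
In case (B), $b_1' = 1$ allows the choice $y = \lambda$, giving $\{\lambda - b_1' y\}_{b_2'} = 0 \leq \{\lambda\}_{b_3}$. The main obstacle is the middle step: correctly translating Reid's classification together with $\sum_i b_i \geq 0$ into the explicit list of admissible tuples $(g, b_1', b_2', b_3)$ requires careful bookkeeping over the choice of unit used to match Reid's normal form and over the coprimality conditions on the surviving weight. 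Once that classification is pinned down, the verification of the numerical inequality is essentially dictated by the structure.
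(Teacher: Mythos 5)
Your overall architecture is exactly the paper's: reduce via Proposition~\ref{fpcri_prop} and Theorem~\ref{fpred_thm} to normality of $\Graphc$ plus reducedness, get normality from Corollary~\ref{gftoric_cor}(1), reduce reducedness to the single chart $U_{31}$ via Theorem~\ref{r=1 reduced}, and verify the numerical criterion of Lemma~\ref{n=3_reduced_equiv}. Your final verification is also the paper's: in your case (A) (which is the wall relation $-au_1-(r-a)u_2+ru_3+u_4=0$ of \cite{FSTU09}) the witness $y=\lfloor \lambda/b_3\rfloor$ with $\{\lambda-b_1'y\}_{b_2'}=\{\{\lambda\}_{b_3}\}_{b_2'}\le\{\lambda\}_{b_3}$ is precisely the paper's computation, and your case (B) is the mirror (under the $b_1\leftrightarrow b_2$ symmetry of Remark~\ref{U31 iff U32}) of the paper's second and third cases, where the witness is trivial because one of $b_1',b_2'$ equals $1$.

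The one genuine gap is in the sentence ``normalizing the wall relation gives $b_4=1$ (because $\sigma_{\{1,2,3\}}$ is smooth).'' Terminality of $X_\Sigma$ does \emph{not} imply that the individual maximal cone $\sigma_{\{1,2,3\}}=\Cone(u_1,u_2,u_3)$ is smooth---a terminal cone of type $\frac{1}{r}(1,a,r-a)$ is singular for $r>1$---and the assertion that (after possibly relabeling) one of the two cones on the $J_+$ side of the wall is smooth is exactly the nontrivial content of the classification in \cite{FSTU09} that the paper cites wholesale. Without this input you cannot invoke Corollary~\ref{gftoric_cor}(1), nor Lemma~\ref{n=3_reduced_equiv} and Theorem~\ref{r=1 reduced}, whose standing hypotheses in Section~\ref{fp_sec} assume $U_{\sigma_4}$ smooth and $b_4=1$. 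Granting that input, the rest of your middle step is correct and is a nice alternative to quoting the full list of wall relations: applying the terminal lemma to $\sigma_{\{1,2,4\}}\cong\frac{1}{b_3}(gb_1',gb_2',-1)$, the three congruences $g(b_1'+b_2')\equiv 0$, $gb_1'\equiv 1$, $gb_2'\equiv 1 \pmod{b_3}$, combined with $b_3\ge g(b_1'+b_2')-1$ and the coprimality $\gcd(gb_i',b_3)=1$ (which forces $g=1$ in the first case since $g\mid b_3$ there), do recover exactly the paper's two families plus the smooth flop. So the fix is simply to replace the parenthetical by a citation of \cite[Theorem 3.1]{FSTU09}, or to supply its proof.
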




\begin{proof}
We have known that $X$ is irreducible by Theorem \ref{fpred_thm}. Without loss of generality, we may assume that $X_{\Sigma_0}$ is affine. By the classification result for $3$-dimensional terminal singularities \cite[Corollary 2.1, Theorem 3.1]{FSTU09} (see also \cite[14-2-5]{Matsuki}), the wall relation for $K_{X_\Sigma}.\cR<0$ is given by
\[
    -au_1-(r-a)u_2+ru_3+u_4=0 \quad\mbox{ or }\quad -au_1-u_2+ru_3+u_4=0,
\]
and for $K_{X_\Sigma}.\cR=0$ is given by
\[-u_1-u_2+u_3+u_4=0,\]
where $\{u_1, u_2, u_3\}$ is a $\bZ$-basis of $N = \bZ^3$, $0 < a< r$ and $\gcd(a, r) = 1$. In particular, the toric variety $X_{\Sigma}$ satisfies Corollary \ref{gftoric_cor} \eqref{gftoric_cor3D} and thus the graph closure $\Graphc$ is normal, where $f = (\phi')^{- 1} \circ \phi$. 

To see the reduced property of $X$, by Theorem~\ref{r=1 reduced}, it suffices to show that the numerical criterion \eqref{n=3_reduced_equiv_2} in Lemma~\ref{n=3_reduced_equiv} holds. For the first case, we have $g=1$. Given $\lambda\geq 0$, we can take $y=\lfloor\frac{\lambda}{r}\rfloor\leq\frac{\lambda}{a}$. Then
\[\lambda-ay=ry+\{\lambda\}_r-ay\implies\{\lambda-ay\}_{r-a}=\{\{\lambda\}_r\}_{r-a}\leq\{g\lambda\}_r.\]
For the second and third cases, given any $\lambda\geq 0$ we can take $y=0$ since $b_2'=1$. Therefore $X$ is a normal toric variety by Proposition \ref{fpcri_prop}.
\end{proof}


We now assume for simplicity that $X_{\Sigma_0} = U_{\sigma_0}$. Recall that the extremal ray $\cR$ is defined by the wall relation 
\begin{equation}\label{wallrel_eqn2}
    \sum_{i \in J_{-}} b_i u_i + \sum_{j \in J_{+}} b_j u_j = 0.
\end{equation}
After suitable scaling, we may assume that $b_i\in\Z$ for $i\in J_-\cup J_+$ and $\gcd(b_1,\ldots,b_{n+1})=1$.

The following proposition illustrates the fiber product $X$ may be not reduced when $X_\Sigma$ has canonical singularities (see Remark \ref{can3_rmk}).

\begin{proposition}\label{can3flop_prop}
Suppose that $\{u_1, u_2, u_3\}$ is a $\bZ$-basis of $N \simeq \bZ^3$, and $J_{-} = \{1, 2\}$, $J_{+} = \{3, 4\}$ and $b_4 = 1$. Assume further that 
\begin{equation}\label{flop_eqn}
    b_1+b_2+b_3+1=0.
\end{equation}
Then the fiber product $X$ is the toric variety $X_{\tSigma}$ if and only if there exist two non-negative integers $y_1$ and $y_2$ such that 
\begin{equation}\label{b=b+b}
b_3=b_1'y_1+b_2'y_2
\end{equation}
where $g=\operatorname{gcd}(b_1,b_2)>0$ and $b_i=-gb_i'$ for $i=1$, $2$.
\end{proposition}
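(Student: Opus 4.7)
The plan is to chain the reductions from the previous sections down to the numerical criterion of Lemma~\ref{n=3_reduced_equiv}, and then to show that this criterion is equivalent to the representability condition on $b_3$. First, since $\{u_1, u_2, u_3\}$ is a $\Z$-basis of $N$, the cone $\sigma_{J_{-} \cup \{3\}} = \Cone(u_1, u_2, u_3)$ is smooth, so by Corollary~\ref{gftoric_cor} the graph closure $\Graphc$ is a normal toric variety. Combining Theorem~\ref{fpred_thm} (which gives irreducibility of $X$ and $X_{\red} = \Graphc$) with Proposition~\ref{fpcri_prop}, the equality $X = X_{\tSigma}$ reduces to $X$ being reduced; by Theorem~\ref{r=1 reduced} this is equivalent to $U_{31}$ being reduced, and by Lemma~\ref{n=3_reduced_equiv} to the inequality \eqref{reduced_ineq}. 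The flop hypothesis $b_1 + b_2 + b_3 + 1 = 0$ combined with $b_i = -g b_i'$ gives the identity $b_3 = g(b_1' + b_2') - 1$, equivalently $g b_1' + g b_2' = b_3 + 1$.

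It then suffices to prove that the criterion \eqref{reduced_ineq} holds for every $\lambda \in [0, b_1' b_2']$ if and only if $b_3 \in S := b_1' \eZ + b_2' \eZ$. For the forward direction, if $b_3 \leq b_1' b_2'$, I would apply the criterion at $\lambda = b_3$: since $\{g b_3\}_{b_3} = 0$, some $0 \leq y \leq b_3/b_1'$ must satisfy $\{b_3 - b_1' y\}_{b_2'} = 0$, which yields $b_3 = b_1' y + b_2' y_2$ with $y, y_2 \geq 0$. If instead $b_3 > b_1' b_2'$, then $b_3$ exceeds the Frobenius number $b_1' b_2' - b_1' - b_2'$ of $S$ (using $\gcd(b_1', b_2') = 1$), so $b_3 \in S$ automatically.

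The main obstacle is the backward direction. Assuming $b_3 = y_1^* b_1' + y_2^* b_2'$ with $y_i^* \geq 0$, the key claim to be proved is that $\{g\lambda\}_{b_3} < g$ forces $\lambda \in S$. To see this, I would write $g\lambda = Q b_3 + s$ with $0 \leq s < g$; since $b_3 \equiv -1 \pmod g$ and $\lambda \in \Z$, one gets $Q = s + g k$ for some $k \geq 0$, and substituting $b_3 + 1 = g(b_1' + b_2')$ produces
\[
\lambda = s(b_1' + b_2') + k b_3 = (s + k y_1^*)\, b_1' + (s + k y_2^*)\, b_2' \in S.
\]
For arbitrary $\lambda \in [0, b_1' b_2']$, I would then set $r' = \lfloor \{g\lambda\}_{b_3}/g \rfloor$; a short case split on whether $\lfloor g\lambda/b_3 \rfloor = 0$ or $\geq 1$ shows $\lambda \geq r'$, and
\[
\{g(\lambda - r')\}_{b_3} = \{g\lambda\}_{b_3} - g r' = \{g\lambda\}_{b_3} \bmod g < g,
\]
so the key claim yields $\lambda - r' \in S$. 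Writing $\lambda - r' = b_1' y + b_2' q$ with $y, q \geq 0$ gives $y \leq \lambda/b_1'$ and $\{\lambda - b_1' y\}_{b_2'} = \{b_2' q + r'\}_{b_2'} \leq r'$, hence $g\{\lambda - b_1' y\}_{b_2'} \leq g r' \leq \{g\lambda\}_{b_3}$, verifying the criterion at $\lambda$.
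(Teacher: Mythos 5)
Your proof is correct, and the geometric reduction (Corollary~\ref{gftoric_cor}, Theorem~\ref{fpred_thm}, Proposition~\ref{fpcri_prop}, Theorem~\ref{r=1 reduced}, Lemma~\ref{n=3_reduced_equiv}) is exactly the one the paper uses. Where you diverge is in the number-theoretic core, i.e.\ the equivalence of \eqref{b=b+b} with the criterion \eqref{n=3_reduced_equiv_2}. For the forward direction the paper invokes the fact (implicit in the proof of Lemma~\ref{n=3_reduced_equiv}) that the criterion automatically extends to all $\lambda\geq 0$ and evaluates at $\lambda=b_3$; you instead stay within the stated range $[0,b_1'b_2']$ and dispose of the case $b_3>b_1'b_2'$ via the Frobenius bound $(b_1'-1)(b_2'-1)$ for the numerical semigroup $S=b_1'\eZ+b_2'\eZ$ --- slightly longer, but with the advantage of not relying on an unstated strengthening of the lemma (the Frobenius observation also already appears in Example~\ref{non-toric-exam}). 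For the backward direction the paper chooses the explicit parametrization $\lambda=pb_3+q(b_1'+b_2')+r$ with $y=py_1+q$ and verifies the inequality by direct computation of both sides; your route isolates the cleaner structural claim that $\{g\mu\}_{b_3}<g$ forces $\mu\in S$ (using $Q\equiv s\pmod g$ and $b_3+1=g(b_1'+b_2')$), and then reduces a general $\lambda$ to this situation by subtracting $r'=\lfloor\{g\lambda\}_{b_3}/g\rfloor$. I checked the details: $k\geq 0$ in $Q=s+gk$, the inequality $r'\leq\lambda$ (which in fact follows immediately from $r'\leq\{g\lambda\}_{b_3}/g\leq\lambda$, no case split needed), the identity $\{g(\lambda-r')\}_{b_3}=\{g\lambda\}_{b_3}-gr'$, and the final chain $g\{\lambda-b_1'y\}_{b_2'}\leq gr'\leq\{g\lambda\}_{b_3}$ all hold. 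Your version arguably explains \emph{why} the representability of $b_3$ is the right condition (every $\lambda$ with small residue $\{g\lambda\}_{b_3}$ is forced into $S$ once $b_3$ is), whereas the paper's computation is shorter but more opaque; both are complete proofs.
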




\begin{proof}
The irreducibility of $X$ and the normality of $\Graphc$ follow as in the proof of Theorem \ref{term3flip_thm}. By Theorem \ref{r=1 reduced} and Proposition \ref{fpcri_prop}, the proposition follows from the claim that $\eqref{b=b+b}$ is equivalent to \eqref{n=3_reduced_equiv_2} in Lemma \ref{n=3_reduced_equiv}.

For \eqref{n=3_reduced_equiv_2}$\Rightarrow$\eqref{b=b+b}, according to the proof in Lemma \ref{n=3_reduced_equiv}, the statement \eqref{n=3_reduced_equiv_2} in Lemma \ref{n=3_reduced_equiv} holds for all $\lambda\geq 0$. In particular, for $\lambda=b_3$, there exists $0\leq y_1\leq b/b_1'$ such that
\[\{b_3-b_1'y_1\}_{b_2'}\leq \{gb_3\}_{b_3}=0.\]
That is, there exists $y_2\in\Z_{\geq 0}$ with $b_3-b_1'y_1=b_2'y_2$ as required.

For \eqref{b=b+b}$\Rightarrow$\eqref{n=3_reduced_equiv_2}, let $\lambda=pb_3+q(b_1'+b_2')+r$ where 
\[0\leq q\leq g-1\quad \text{and}\quad 0\leq r\leq b_1'+b_2'-1-\delta_{q,g-1}.\]
We take $y=py_1+q\leq pb_3/b_1'+q\leq\lambda/b_1'$, and notice that
\[\{g\lambda\}_{b_3}=\{q(b_1+b_2)+gr\}_{b_3}=\{q+gr\}_{b_3}=q+gr,\]
since $0 \leq q+gr\leq g-1+g(b_1'+b_2'-1 - \delta_{q, g - 1}) < b_3$. 
On the other hand,
\[\{\lambda-b_1'y\}_{b_2'}=\{p(b_3-b_1'y_1)+qb_1'+r-qb_1'\}_{b_2'}=\{r\}_{b_2'}.\]
Hence
\[\{g\lambda\}_{b_3}=q+gr\geq g\cdot\{r\}_{b_2'}=g\cdot\{\lambda-b_1'y\}_{b_2'}.\]
\end{proof}

\begin{remark}\label{can3_rmk}
Note that 
the singularities of $X_{\Sigma}$ in Proposition \ref{can3flop_prop} are at worst canonical singularities. Indeed, since $\sigma_4$ is smooth cone, we only need to check the condition (b) in \cite[Proposition 11.4.12]{CLS} for $\sigma_3$, where $\sigma_i = \sigma_{\{1, 2,3, 4\} \setminus \{i\}}.$ Set the polytope $\Pi_{\sigma_3} = \operatorname{Conv}(0,u_1,u_2,u_4)$. For $m \in \Pi_{\sigma_3} \cap M$, there are $a$, $a_i\in\Z_{\geq 0}$ and $a_1+a_2+a_3\leq a$ such that $am=a_1u_1+a_2u_2+a_3u_4$. Then
\[a\mid(a_1-b_1a_3), (a_2-b_2a_3), -a_3b_3,\]
since $\{u_1, u_2, u_3\}$ is a $\bZ$-basis and the wall relation \eqref{wallrel_eqn2},
and thus
\[
    a_1 + a_2 + a_3 = a_1 + a_2 - a_3 (b_1 + b_2 + b_3) \equiv 0 \pmod{a}.
\]
This implies that $m\in\operatorname{Conv}(u_1,u_2,u_4)\cup\{0\}$, since $0\leq a_1+a_2+a_3\leq a$.

Also, $X_{\Sigma} \to U_{\sigma_0}$ is a  flopping contraction since $K_{X_{\Sigma}} \cdot \cR = 0$ by \eqref{flop_eqn}.
\end{remark}

\begin{exmp}\label{non-toric-exam}
By the elementary number theory, if $b_3\geq (b_1'-1)(b_2'-1)$, then the condition \eqref{b=b+b} will hold. 
In this case, the fiber product $X$ is the toric variety  by Proposition \ref{can3flop_prop}.

On the other hand, taking $(b_1, b_2) = (- 3, - 3 k - 5)$ for $k \in \eZ$, we find that 
the fiber product $X$ is not a toric variety in this case. In fact, it is easily seen that
if $(b_1,b_2)$ does not satisfy \eqref{b=b+b} then $(b_1, b_1 + b_2)$ also does not satisfy \eqref{b=b+b}.

\end{exmp}




\subsection{Higher dimensional case}
Since the criterion for being reduced of $X$ only holds for $3$-dimensional case, it seems difficult to say something more about higher dimensional case. 
However, if we assume the smoothness of $X_{\Sigma}$, then by more detailed argument, a useful criterion can also be achieved. 

\begin{thm}\label{smooth is redcued equivalent}
Assume that $X_{\Sigma}$ is smooth of dimension $n$ and $X_{\Sigma_0}$ is affine with the wall relation \eqref{wallrel_eqn2}. Then the fiber product $X=X_{\Sigma}\times_{X_{\Sigma_0}}X_{\Sigma'}$ is the toric variety $X_{\tSigma}$ 
if and only if 
\begin{equation}\label{divided}
b_{i}\mid b_{j} \mbox{ or }\ b_{j}\mid b_i
\end{equation}
for any $i$, $j\in J_-$.
\end{thm}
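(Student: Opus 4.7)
The plan is to reduce the problem to a local reducedness question on the affine cover, carry out a higher-dimensional analogue of Lemma \ref{n=3_reduced_equiv}, and then identify the resulting combinatorial criterion with the divisibility condition \eqref{divided}.

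First I would localize. Since $X_\Sigma$ is smooth, Corollary \ref{gftoric_cor}\eqref{gftoric_cornDsm} implies that $\Graphc$ is normal, and Theorem \ref{fpred_thm} gives $X_{\red} = \Graphc = X_{\tSigma}$. Proposition \ref{fpcri_prop} then translates ``$X = X_{\tSigma}$'' into the local condition ``$X$ is reduced,'' which can be tested on the affine pieces $U_{ji}$ formed as fiber products over $U_{\sigma_0}$ of the maximal cones $\sigma_{\{1,\ldots,n+1\}\setminus\{j\}}$ of $\Sigma$ (for $j \in J_+$) and $\sigma_{\{1,\ldots,n+1\}\setminus\{i\}}$ of $\Sigma'$ (for $i \in J_-$). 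Smoothness of each such maximal cone of $\Sigma$, together with the normalization $\gcd(b_1, \ldots, b_{n+1}) = 1$ of the wall relation \eqref{wallrel_eqn2}, forces $b_j = 1$ for every $j \in J_+$.

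Next I would set up the higher-dimensional analogue of Lemma \ref{n=3_reduced_equiv}. By relabelling, assume $n+1 \in J_+$, and work with the $\Z$-basis $\{u_1, \ldots, u_n\}$ coming from the smooth cone $\Cone(u_1, \ldots, u_n)$, together with its dual basis $\{u_1^\vee, \ldots, u_n^\vee\}$. For each $i \in J_-$, reducedness of $U_{n+1, i}$ should reduce---by the same ring-theoretic arguments as in the proof of Lemma \ref{n=3_reduced_equiv}---to a semigroup inclusion $\Gamma \subseteq \sfS_{\sigma_0} + \Gamma'$, where $\Gamma = M \cap \overline{(\sigma')^\vee \setminus \sigma^\vee}$ and $\Gamma' = \Gamma \cap \Cone(u)^\perp$ with $u = \sum_{j \in J_+} u_j = \sum_{k \in J_-} |b_k| u_k$ the vector of the wall relation. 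Writing elements of $\Gamma$ in dual-basis coordinates and using the wall relation to eliminate the coordinate along $u_{n+1}$, this inclusion becomes a non-negative congruence system indexed by $\{|b_k| : k \in J_- \setminus \{i\}\}$, directly generalizing the inequality \eqref{reduced_ineq}.

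Then I would show that this congruence system is solvable for every input if and only if \eqref{divided} holds. For the sufficiency direction, order $J_- = \{i_1, \ldots, i_s\}$ so that $|b_{i_1}|$ divides $|b_{i_2}|$ divides $\cdots$ divides $|b_{i_s}|$, and solve the congruences greedily along this chain. For necessity, if some pair $b_i, b_k \in J_-$ satisfies neither $b_i \mid b_k$ nor $b_k \mid b_i$, I would construct an explicit obstruction point in $\Gamma$, modelled on the ``$\lambda = 1$'' obstruction appearing in the smooth 3D specialization of Lemma \ref{n=3_reduced_equiv}: a lattice point whose residues modulo $|b_i|$ and $|b_k|$ cannot simultaneously be cleared by any element of $\sfS_{\sigma_0} + \Gamma'$. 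The main obstacle will be this combinatorial step: when $|J_-| \geq 3$, the chain ordering must be invoked carefully so that the greedy algorithm does not get stuck, and the obstruction witness must be chosen so as not to be accidentally cleared by any of the remaining coefficients $b_\ell$ with $\ell \in J_- \setminus \{i, k\}$.

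Finally, since the pieces $U_{ji}$ cover $X$, reducedness of $X$ is equivalent to reducedness of every $U_{ji}$. The criterion derived above depends only on the multiset $\{|b_k| : k \in J_-\}$---the smoothness of $X_\Sigma$ renders the role of $j \in J_+$ symmetric---so it coincides with \eqref{divided} for every pair $(j, i) \in J_+ \times J_-$, bypassing the need for a higher-dimensional analogue of Theorem \ref{r=1 reduced}.
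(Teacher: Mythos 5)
Your plan tracks the paper's own proof almost step for step: the same localization via Proposition \ref{fpcri_prop}, Corollary \ref{gftoric_cor} \eqref{gftoric_cornDsm} and Theorem \ref{fpred_thm}; the same observation that smoothness forces $b_j=1$ for every $j\in J_+$; the same reduction of reducedness of each piece $U_{\sigma_{n+1}}\times_{U_{\sigma_0}}U_{\sigma_i}$ to a semigroup inclusion generalizing \eqref{Gamma in Gamma'}; and the same assembly of the per-piece pairwise divisibility statements over the affine cover (the paper likewise does not need an analogue of Theorem \ref{r=1 reduced} here). So the route is the right one.

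There are two places where the plan underestimates the work. First, your $\Gamma'=\Gamma\cap\Cone(u)^\perp$ is perpendicular to \emph{all} of $J_+$ (for $m\in(\sigma')^\vee$ one has $\gen{m,u}=0$ iff $\gen{m,u_j}=0$ for every $j\in J_+$), whereas the paper first works with $\Gamma'=\Gamma\cap\Cone(u_{n+1})^\perp$ — perpendicular only to the generator omitted from $\sigma$, exactly as in Lemma \ref{n=3_reduced_equiv} — and only upgrades to your cone $\Gamma''$ by an extra argument that must borrow a second index $j'\in J_+^*\setminus\{j_1\}$ and therefore only works when $|J_+|\neq 2$; the case $|J_+|=2$ is treated separately with the weaker inclusion. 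Claiming the equivalence follows ``by the same ring-theoretic arguments'' as the 3D lemma hides this case split: that lemma's argument controls the single extra generator $-u_3^\vee$ of $\sA\setminus\sA_0$, while here $\sA\setminus\sA_0=\{u_j^\vee\mid j\in J_+^*\}$ may have several elements. Second, the combinatorial core you defer is where the paper does its real work: for necessity it exhibits the explicit witness $z=-n_1u_1^\vee+n_iu_i^\vee$ with $b_i'n_i=b_1'n_1+1$ (supported only on $\{1,i\}$, which is exactly what prevents the remaining $b_\ell$ from ``clearing'' it), and for sufficiency it does not run a greedy chain but normalizes $z$ to a canonical $\widetilde z$ and reduces, after dividing out $\gcd\{b_i\}_{i\in J_-}$, to the case where some $b_i=-1$, where the decomposition is written down directly. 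Your greedy-along-the-chain idea is plausible and probably equivalent, but it is precisely the step that still has to be proved.
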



\begin{proof}
The irreducibility of $X$ and the normality of $\Graphc$ are true by Theorem \ref{fpred_thm} and Corollary \ref{gftoric_cor} \eqref{gftoric_cornDsm}. Then by Proposition \ref{fpcri_prop}, the result follows from the {\bf claim} that $\eqref{divided}$ is equivalent to the generalized version of \eqref{Gamma in Gamma'} in Lemma \ref{n=3_reduced_equiv} for any affine piece of $X$.

Recall that $\gcd(b_1, \ldots, b_{n+1}) = 1$. Given $j \in J_{+}$, we have that the set $\{u_k \mid k\neq j\}$ forms a $\bZ$-basis of $N$ since $X_{\Sigma}$ is smooth, so the wall relation \eqref{wallrel_eqn2} implies that $b_j \mid b_k$ for each $1 \leq k \leq n+1$, and thus $b_j=1$ for each $j\in J_+$. 

For simplicity, we assume that $1\in J_-$ and $n+1\in J_+$. 
Let $J_+^* =  J_+ \setminus \{n+1\}$ and $J_-^* =  J_- \setminus \{1\}$. We will prove that $b_1\mid b_i$ or $b_i\mid b_1$ for each $i\in J_-^*$ if and only if $U_{\sigma}\times_{U_{\sigma_0}}U_{\sigma'}$ is reduced, where $\sigma = \sigma_{n + 1}$ and $\sigma' = \sigma_1$. 


 



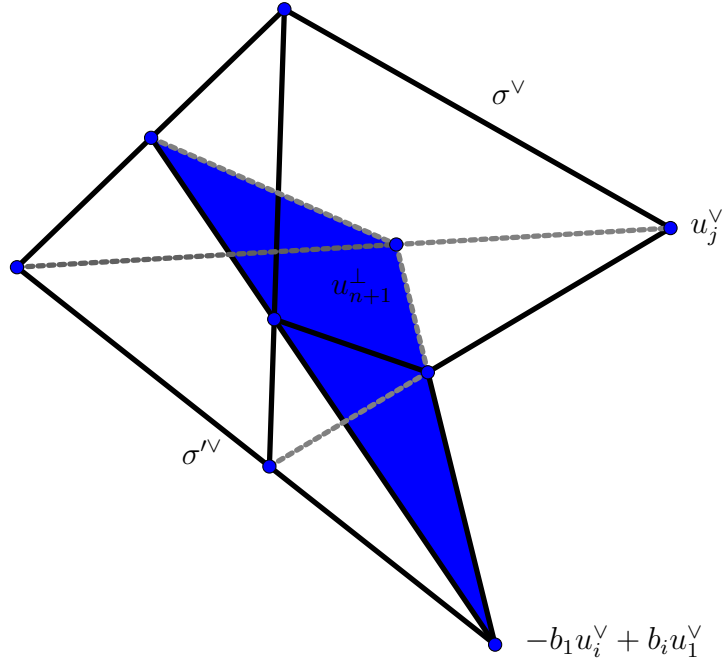
\begin{figure}
\centering
\begin{tikzpicture}[line cap=round,line join=round,>=triangle 45,x=1.0cm,y=1.0cm]
\definecolor{wqwqwq}{rgb}{0.3764705882352941,0.3764705882352941,0.3764705882352941}
\definecolor{yqyqyq}{rgb}{0.5019607843137255,0.5019607843137255,0.5019607843137255}
\definecolor{qqqqff}{rgb}{0.,0.,1.}
\clip(-2.1103707162942564,-6.3) rectangle (18.988760620667172,2.8);
\fill[line width=2.pt,color=qqqqff,fill=qqqqff,fill opacity=0.10000000149011612] (9.161724065488615,-5.939206156058178) -- (6.225292398996466,-1.6095540972294011) -- (8.2674473020913,-2.3159826941244566) -- cycle;
\fill[line width=2.pt,color=qqqqff,fill=qqqqff,fill opacity=0.10000000149011612] (6.225292398996466,-1.6095540972294011) -- (4.588644536840115,0.8036182802596363) -- (7.847786988488664,-0.6157002351336429) -- (8.2674473020913,-2.3159826941244566) -- cycle;
\draw [line width=2.pt] (2.805476719114314,-0.9180609920273426)-- (6.161133230085074,-3.568876409369539);
\draw [line width=2.pt] (2.805476719114314,-0.9180609920273426)-- (6.3603274521974935,2.5142086812943454);
\draw [line width=2.pt] (6.3603274521974935,2.5142086812943454)-- (6.161133230085074,-3.568876409369539);
\draw [line width=2.pt] (6.3603274521974935,2.5142086812943454)-- (11.493409329709845,-0.397091488041015);
\draw [line width=2.pt] (6.161133230085074,-3.568876409369539)-- (9.161724065488615,-5.939206156058178);
\draw [line width=2.pt] (6.225292398996466,-1.6095540972294011)-- (8.2674473020913,-2.3159826941244566);
\draw [line width=2.pt,dash pattern=on 2pt off 2pt,color=yqyqyq] (4.588644536840115,0.8036182802596363)-- (7.847786988488664,-0.6157002351336429);
\draw [line width=2.pt] (8.2674473020913,-2.3159826941244566)-- (9.161724065488615,-5.939206156058178);
\draw [line width=2.pt,dash pattern=on 2pt off 2pt,color=yqyqyq] (7.847786988488664,-0.6157002351336429)-- (8.2674473020913,-2.3159826941244566);
\draw [line width=2.pt] (4.588644536840115,0.8036182802596363)-- (6.225292398996466,-1.6095540972294011);
\draw [line width=2.pt] (6.225292398996466,-1.6095540972294011)-- (9.161724065488615,-5.939206156058178);
\draw [line width=2.pt,dash pattern=on 2pt off 2pt,color=yqyqyq] (6.161133230085074,-3.568876409369539)-- (8.2674473020913,-2.3159826941244566);
\draw [line width=2.pt] (8.2674473020913,-2.3159826941244566)-- (11.493409329709845,-0.397091488041015);
\draw [line width=2.pt,dash pattern=on 2pt off 2pt,color=wqwqwq] (2.805476719114314,-0.9180609920273426)-- (7.847786988488664,-0.6157002351336429);
\draw [line width=2.pt,dash pattern=on 2pt off 2pt,color=yqyqyq] (7.847786988488664,-0.6157002351336429)-- (11.493409329709845,-0.397091488041015);
\draw (11.613400551552333,-0.013816764166816072) node[anchor=north west] {$u_j^\vee$};
\draw (9.41279582930288,-5.548671065582118) node[anchor=north west] {$-b_1u_i^\vee+b_iu_1^\vee$};
\draw (8.97267488485299,1.7066560186827597) node[anchor=north west] {$\sigma^\vee$};
\draw (4.851542405004013,-3.0813263769993933) node[anchor=north west] {$\sigma'^\vee$};
\draw (6.838755154186853,-0.787362666533292) node[anchor=north west] {$u_{n+1}^\perp$};
\begin{scriptsize}
\draw [fill=qqqqff] (2.805476719114314,-0.9180609920273426) circle (2.5pt);
\draw [fill=qqqqff] (6.161133230085074,-3.568876409369539) circle (2.5pt);
\draw [fill=qqqqff] (6.3603274521974935,2.5142086812943454) circle (2.5pt);
\draw [fill=qqqqff] (11.493409329709845,-0.397091488041015) circle (2.5pt);
\draw [fill=qqqqff] (9.161724065488615,-5.939206156058178) circle (2.5pt);
\draw [fill=qqqqff] (4.588644536840115,0.8036182802596363) circle (2.5pt);
\draw [fill=qqqqff] (7.847786988488664,-0.6157002351336429) circle (2.5pt);
\draw [fill=qqqqff] (6.225292398996466,-1.6095540972294011) circle (2.5pt);
\draw [fill=qqqqff] (8.2674473020913,-2.3159826941244566) circle (2.5pt);
\end{scriptsize}
\end{tikzpicture}
\caption{$\sigma^\vee$ and $(\sigma')^\vee$ which are cut by a hyperplane in $M$}
\end{figure}

Assume that $U_{\sigma}\times_{U_{\sigma_0}}U_{\sigma'}$ is reduced. Observe that $u_j^\vee\in\sigma^\vee\setminus\sigma'^\vee$ spans a ray of $\sigma^\vee$ for each $j\in J_+^*$, where $\{u_1^\vee,\ldots,u_n^\vee\}$ is the dual basis of $\{u_1,\ldots,u_n\}$.  Notice that the generating set $\sA$ can be selected to be $\sA_0\cup\{u_j^\vee\mid j\in J_+^*\}$, since $\sigma$ is smooth. Set
\[\Gamma=M\cap\overline{\sigma'^\vee\setminus\sigma^\vee}\quad \mbox{and} \quad \Gamma'=\Gamma\cap\Cone(u_{n+1})^\perp.\]
Then $\gen{z,u_{n+1}}>0$ for each $z\in\Gamma\setminus\Gamma'$. 
Since $\gen{u_j^\vee,u_{n+1}}=-1$, we still have $\gen{z+u_j^\vee,u_{n+1}}\geq 0$ and thus $z+u_j^\vee\in\Gamma$ for all $j\in J_+^*$. By the same argument in Lemma \ref{n=3_reduced_equiv}, if $U_{\sigma}\times_{U_{\sigma_0}}U_{\sigma'}$ is reduced, then 
\begin{equation}\label{smooth eq1}
\Gamma\subseteq \sfS_{\sigma_0}+\Gamma'.
\end{equation} 
Let $\Gamma''=\Gamma'\cap \bigcap_{j\in J_+^*} \Cone(u_j)^\perp$. For any $z\in\Gamma'\setminus\Gamma''$, there is a $j_1\in J_+^*$ such that $\gen{z,u_{j_1}}>0$. If $|J_+|\neq 2$, then we can choose another index $j'\in J_+^*\setminus\{j_1\}$, which implies that
\[(z-u_{j_1}^\vee+u_{j'}^\vee)+u_{j_1}^\vee=z+u_{j'}^\vee\]
and $z-u_{j_1}^\vee+u_n^\vee\in\Gamma'$. Again, by the same argument, we have 
\begin{equation}\label{smooth eq2}
\Gamma'\setminus\Gamma''\subseteq\left(\sfS_{\sigma_0}\setminus 0\right)+\Gamma''
\end{equation}
if $U_{\sigma}\times_{U_{\sigma_0}}U_{\sigma'}$ is reduced. Combining \eqref{smooth eq1} and \eqref{smooth eq2}, we get
\begin{equation}\label{smooth eq3}
\Gamma\subseteq \sfS_{\sigma_0}+\Gamma''.
\end{equation}

Conversely, we claim that \eqref{smooth eq3} (resp.~\eqref{smooth eq1}) implies that $U_{\sigma}\times_{U_{\sigma_0}}U_{\sigma'}$ is reduced when $|J_+|\neq 2$ (resp. $|J_+|=2$). 
As in the proof of Theorem \ref{gfbir_prop}, we pick 
\begin{equation}
    (\alpha_0 + \alpha + \alpha') - (\beta_0 + \beta + \beta') \in L_\sB
\end{equation}
where $\alpha_0, \beta_0 \in \eZ^{\sA_0}$, $\alpha, \beta \in \eZ^{\sA \setminus \sA_0}$ and $\alpha', \beta' \in \eZ^{\sA' \setminus \sA_0}$. If $|J_+|\neq 2$, by the decomposition \eqref{smooth eq3}, then we may assume that $\alpha'$, $\beta'\in\Gamma''$. Since $\sfS_{\sigma}$ is generated by $\{u_i^\vee\mid 1\leq i\leq n\}$ as a semigroup, we may assume that $\alpha_0$, $\beta_0\in\operatorname{Cone}(u_i^\vee\mid i\in J_-\cup J_0)$. In this case, since 
\begin{equation*}
    \alpha_0, \beta_0, \alpha', \beta'\in\bigcap_{j\in J_+^*}u_j^\perp \quad\mbox{and}\quad \alpha, \beta\in\bigcap_{i\in(J_-\cup J_0)}u_i^\perp,
\end{equation*}
we conclude that $\alpha=\beta$ and thus
\[x^{\alpha_0+\alpha+\alpha'}-x^{\beta_0+\beta+\beta'}=x^\alpha(x^{\alpha_0+\alpha'}-x^{\beta_0+\beta'})\in I_\sA + I_{\sA'}.\]
If $|J_+|=2$, then $\alpha$, $\beta\in 
\Z_{\geq 0} \Cone(u_\ell)^\perp$ where $J_+=\{\ell,n+1\}$. Then the same argument in Lemma~\ref{n=3_reduced_equiv} will work.

We can translate the condition~\eqref{smooth eq3} (resp.~\eqref{smooth eq1}) to the following condition:
\begin{enumerate}
    \item[(\hypertarget{smooth}{$\spadesuit$})] Given $z=\sum_{i=1}^n z_iu_i^\vee\in\Gamma$, that is, $z_1\leq 0$, $z_i\geq 0$ for $i\neq 1$ and 
    \begin{equation*}
    \sum_{i=1}^n b_iz_i\leq 0,
    \end{equation*}
    there exists $z'=\sum_{i\in (J_-\cup J_0)} z_i'u_i^\vee\in\Gamma''$ (resp.~$z'=\sum_{i=1}^n z_i'u_i^\vee\in\Gamma'$) such that $z-z'\in\sfS_{\sigma_0}$, that is, there exists $z_1'\leq 0$, $z_i'\geq 0$ for all $i\neq 1$ such that
    \begin{equation*}
    \sum_{i} b_iz_i'=0,
    \end{equation*}
    and $z_i\geq z_i'$ for all $i$.
\end{enumerate}
We claim that if (\hyperlink{smooth}{$\spadesuit$}) holds, then $b_1\mid b_i$ or $b_i\mid b_1$ for each $i\in J_-^*$. 
Indeed, let $g=\operatorname{gcd}(b_1,b_i)>0$ and $b_1=-gb_1'$, $b_i=-gb_i'$. There exists integers $0\leq n_1<b_i'$ and $n_i$ such that $b_i'n_i=b_1'n_1+1$. Note that 
\begin{equation}\label{divide_ineq}
    n_i=\dfrac{b_1'n_1+1}{b_i'}\leq\dfrac{b_1'(b_i'-1)+1}{b_i'}=b_1'+\dfrac{1-b_1'}{b_i'}\leq b_1'
\end{equation}
and the equality holds only when $b_1'=1$ and $n_1=b_i'-1$. Consider $z=-n_1u_1^\vee+n_iu_i^\vee\in\overline{\Gamma}$. Then there exists $z_i'\geq 0\geq z_1'$ such that $-n_1\geq z_1'$ and $n_i\geq z_i'$ by (\hyperlink{smooth}{$\spadesuit$}) and $b_1'z_1'+b_i'z_i'=0$. If $b_i'\neq 1$, then $n_1>0$. Since $b_i'\mid z_1'$ and $z_1'\leq -n_1<0$, we must have $z_1'\leq -b_i'$, and thus
\[b_1'\geq n_i\geq z_i'=\dfrac{-b_1z_1'}{b_i'}\geq b_1'.\]
We conclude that $n_i=b_1'$, and thus $b_1'=1$, since the equality in \eqref{divide_ineq} holds.


Conversely, we will show that \eqref{divided} implies (\hyperlink{smooth}{$\spadesuit$}). Given $z\in\Gamma$, we define $\widetilde{z}=\sum_{i\in J_-}\widetilde{z}_iu_i^\vee$ by setting
\[\widetilde{z}_1=\lfloor\frac{-1}{b_1}\sum_{i\in J_-^*}b_iz_i\rfloor \quad\mbox{and}\quad \widetilde{z}_i=z_i\quad \mbox{for } i\in J_-^*.\]
Then $\sum_{i\in J_-}b_i\widetilde{z}_i\leq 0$ by construction, so that $\widetilde{z}\in\Gamma$. 
If there exists $z'=\sum_{i\in J_-}z_i'u_i^\vee\in\Gamma''\subseteq\Gamma'$ such that $\widetilde{z}-z'\in\sfS_{\sigma_0}$, then
\[z-z'=(z-\widetilde{z})+(\widetilde{z}-z')\in\eZ u_1^\vee+\sfS_{\sigma_0}\subseteq\sfS_{\sigma_0}.\]
So it suffices to show that
\begin{enumerate}
    \item[(\hypertarget{smooth'}{$\spadesuit'$})] Given $z=\sum_{j\in J_-}^n z_1u_i^\vee$, where $z_i\geq 0$ for $i\in J_-^*$ and $z_1=\lfloor\frac{-1}{b_1}\sum_{j\in J_-}b_jz_j\rfloor$, there exists $z'=\sum_{i\in J_-} z_i'u_i^\vee\in\Gamma''$ such that $z-z'\in\sfS_{\sigma_0}$.
\end{enumerate}
Observe that condition (\hyperlink{smooth'}{$\spadesuit'$}) holds when $b_i=-1$ for some $i\in J_-$. Indeed, we can take 
\[z_i'=\sum_{k\in J_-\setminus\{i\}}b_kz_k\leq z_i \quad\mbox{and}\quad z_k'=z_k\quad\text{for } k\neq J_-\setminus\{i\}.\]
Moreover, since we can divide the greatest common divisor of $\{b_i\}_{i\in J_-}$ in the condition (\hyperlink{smooth}{$\spadesuit$}) and go back to the case when $b_i=-1$ for some $i\in J_-$, we have \eqref{divided} implies (\hyperlink{smooth'}{$\spadesuit'$}), and thus implies (\hyperlink{smooth}{$\spadesuit$}). Hence $X$ is reduced when \eqref{divided} holds.
\end{proof}

\appendix \section{Remarks on $K$-equivalent toric varieties}\label{app_sec}

Let $X=X_\Sigma$ and $X'=X_{\Sigma'}$ be two simplicial toric
varieties with at most terminal singularities such that $X$ and $X'$ are $K$-equivalent, denoted by $X=_K
X'$. We know that this is equivalent to that $\hbox{shed}\,\Sigma
= \hbox{shed}\,\Sigma'$, or in other words, the fans $\Sigma$ and
$\Sigma'$ give rise to different triangulations of the same
polyhedron.

Let $f\colon X\dashrightarrow X'$ be a toric flop given by
\[\begin{tikzcd}
X=X_\Sigma \arrow[rd, "\phi_\cR"'] && X'=X_{\Sigma'} \arrow[ld, "\phi'"] \\
& \bar X=X_{\Sigma_0} &
\end{tikzcd}\]
and $\tau$ be a wall with $V(\tau) \in \cR$ whose wall relation is $\sum_{i = 1}^{n + 1} b_i u_i = 0$. We may assume that $J_- = \{1,\ldots,\alpha\}$, $J_+ = \{\beta+1,\ldots,n+1\}$ and $b_{n+1}=1$.
Note that all the primitive vectors should all lie in an  affine hyperplane
of $N_{\mathbb{Q}}$ for toric flops. The exceptional set $Z$ of $\phi_\cR$ corresponding to $\Cone(u_1,\ldots,u_\alpha)$ under $\phi_\cR$ of dimension $n-\alpha$ is mapped to
$S\coloneqq\phi_\cR(Z)\subset \bar{X}$ corresponding to 
$\Cone(u_1,\ldots,u_\alpha,u_{\beta+1},\ldots,u_{n+1})$ of dimension $\beta-\alpha$. The map $Z\to S$ is a
bundle with fiber covered by a weighted projective space
$\tilde{\mathbb{P}}^{n-\beta}$ through a finite morphism (cf.
\cite{Matsuki}, 14-2-3). Similar statements hold for $\phi' \colon X'\to \bar X$ with the exceptional set $Z'$ of $\phi'$ fibered over
$S$ with fiber covered by
$\tilde{\mathbb{P}}^{\alpha-1}$. 


\begin{proposition}\label{smooth=ordinary}
Any smooth toric flop is ordinary. 
\end{proposition}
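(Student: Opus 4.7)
The plan is to combine smoothness with the flop hypothesis to pin down the wall relation, and then to identify the resulting local picture with the standard local model of an ordinary $\bP^r$-flop. First, applying the argument in the proof of Theorem~\ref{smooth is redcued equivalent} to $X_\Sigma$ (smooth) yields $b_j = 1$ for every $j \in J_+$; running the same argument with the roles of $\Sigma$ and $\Sigma'$ interchanged, and using that $X_{\Sigma'}$ is also smooth, yields $b_i = -1$ for every $i \in J_-$. The flop condition $K_{X_\Sigma} \cdot \cR = 0$ is equivalent to $\sum_i b_i = 0$ (as noted at the start of Appendix~\ref{app_sec}, the $u_i$ lie on a common affine hyperplane), which together with the above forces $|J_-| = |J_+| =: r+1$ and reduces the wall relation to
\begin{equation*}
    \sum_{j \in J_+} u_j \;=\; \sum_{i \in J_-} u_i.
\end{equation*}

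Having localized to $X_{\Sigma_0} = U_{\sigma_0}$, I would next identify the exceptional locus $Z = V(\sigma_{J_-})$ as a Zariski-locally trivial $\bP^r$-bundle $\pi \colon Z \to S$ over the base $S = V(\sigma_{J_- \cup J_+})$. Using the star-fan description of orbit closures, smoothness of every maximal cone of $\Sigma$ containing $\sigma_{J_-}$ implies that the quotient lattice $N(\sigma_{J_-}) \coloneqq N/(N \cap \mathrm{span}\,\sigma_{J_-})$ is free, and the images $\bar u_j$ of $u_j$ for $j \in J_+$ are $r+1$ primitive vectors whose unique linear relation is $\sum_j \bar u_j = 0$; thus the star fan of $\sigma_{J_-}$ is, over each affine chart of $S$, the fan of $\bP^r$, so that $Z \cong \bP_S(F) \to S$ for a rank-$(r+1)$ toric vector bundle $F$ on $S$. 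By the symmetric argument, $Z' \cong \bP_S(F') \to S$ for another rank-$(r+1)$ toric vector bundle $F'$.

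The main step is then the computation of the normal bundle $N_{Z/X}$. Using the conormal sequence for $Z \subset X$ together with the toric description of $T$-invariant prime divisors, the conormal bundle $N^\vee_{Z/X}$ is locally generated by the $r+1$ characters dual to $u_i$ for $i \in J_-$; restricting each such character to a $\pi$-fiber produces $\cO_{\bP^r}(1)$ by way of the reduced wall relation. This gives the fiberwise identification $N_{Z/X}|_{\mathrm{fiber}} \cong \cO_{\bP^r}(-1)^{\oplus (r+1)}$, and a pushforward calculation of $\pi_*(N_{Z/X}(1))$ identifies this bundle with $F'$, upgrading the local statement to the global splitting
\begin{equation*}
    N_{Z/X} \;\cong\; \cO_{\bP(F)}(-1) \otimes \pi^* F',
\end{equation*}
which is precisely the defining data of the local model of an ordinary $\bP^r$-flop in the sense of \cite{LLW10}. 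The symmetric analysis from the $X_{\Sigma'}$-side then shows $f$ is an ordinary flop.

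The hard part will be the global identification of $N_{Z/X}$ over all of $Z$ rather than only on a single fiber, together with the verification that the two bundles $F$ and $F'$ appearing above are literally the ones fitting into the ordinary flop model and exhibit the expected $J_+ \leftrightarrow J_-$ symmetry implementing the flop. I expect the cleanest route is through Klyachko's combinatorial description of toric vector bundles combined with Reid's explicit local formulas for toric flops on standard affine charts, which should exhibit $F$ and $F'$ as direct sums of toric line bundles on $S$ indexed by $J_+$ and $J_-$ respectively.
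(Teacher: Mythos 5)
Your proposal follows the same combinatorial backbone as the paper's proof — use smoothness to force every nonzero wall coefficient to be $\pm 1$, then use the affine-hyperplane (flop) condition to get $|J_-|=|J_+|$ — but it is both more careful at the start and more ambitious at the end. On the first step you are actually \emph{more} correct than the paper's own wording: the paper derives all the signs from the two cones $\Cone(u_1,\dots,u_n)$ and $\Cone(u_1,\dots,u_{n-1},u_{n+1})$ of $\Sigma$ alone, but smoothness of $X_\Sigma$ only forces $b_j=1$ for $j\in J_+$ (each $\sigma_j$ with $j\in J_+$ lies in $\Sigma$); to get $b_i=-1$ for $i\in J_-$ one genuinely needs, as you do, the smoothness of the cones $\sigma_i\in\Sigma'$, i.e.\ of $X_{\Sigma'}$. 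This is not cosmetic: the wall relation $-2u_1-u_2+u_3+u_4+u_5=0$ with $\{u_1,\dots,u_4\}$ a $\Z$-basis gives a toric flop with $X_\Sigma$ smooth, the $u_i$ coplanar, but $Z\cong\P^2$ and $Z'\cong\P^1$, so ``smooth toric flop'' must be read as both sides smooth and your use of both hypotheses is the right reading. On the second step you diverge from the paper in level of detail rather than in substance: the paper stops at ``$Z\to S$ and $Z'\to S$ are bundles with fibers $\P^{n-\beta}$ and $\P^{\alpha-1}$, and $n-\beta=\alpha-1$, hence the diagram is an ordinary $\P^{\alpha-1}$-flop,'' whereas you propose to actually verify the defining data of an ordinary flop in the sense of \cite{LLW10}, namely $Z\cong\P_S(F)$ and $N_{Z/X}\cong\cO_{\P(F)}(-1)\otimes\pi^*F'$. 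Your fiberwise computation ($D_i\cdot V(\tau)=b_i/b_{n+1}=-1$ for $i\in J_-$, giving $N_{Z/X}|_{\mathrm{fiber}}\cong\cO_{\P^r}(-1)^{\oplus(r+1)}$) is the standard one and is correct; the global identification of $F$ and $F'$ that you flag as ``the hard part'' is indeed not carried out, but the paper does not carry it out either, so your sketch is at least as complete as the published argument and makes explicit exactly what the paper's phrase ``translating these data to Reid's diagram'' is eliding.
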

\begin{proof}
The smoothness condition tells
us that the primitive generators $u_1,\ldots, u_n$ form a
$\mathbb{Z}$-basis of the lattice $N$ and so do
$u_1,\ldots,u_{n-1},u_{n+1}$. When we represent $u_{n+1}$ as a
$\mathbb{Z}$-linear combination of $u_1,\ldots, u_n$ and $u_n$ as
a $\mathbb{Z}$-linear combination of $u_1,\ldots,u_{n-1},u_{n+1}$
simultaneously, we can get that $b_i = -1$ for $i =
1,\ldots,\alpha$ and $b_i = 1$ for $i = \beta + 1,\ldots, n$. 

All $u_1,\ldots,u_n,u_{n+1}$ should all lie in
an affine hyperplane of $N_{\mathbb{Q}}$. This implies that
$$
- \sum\nolimits_{i=1}^\alpha b_i = \sum\nolimits_{i =
\beta+1}^nb_i + 1
$$
and thus $\alpha = n + 1 - \beta$.

Translating these data to the Reid's diagram, we have that $Z\to
S$ is a bundle with fiber a projective space
$\mathbb{P}^{n-\beta}$ and $Z'\to S$ is a bundle with fiber a
projective space $\mathbb{P}^{\alpha-1}$. Note that $n - \beta =
\alpha - 1$. It illustrates that the whole diagram for this case
is an ordinary $\mathbb{P}^{\alpha-1}$-flop. Hence we complete the
proof.
\end{proof}

By extending Reid's argument, we may decompose $K$-equivalent birational maps into toric
flops.

\begin{theorem}\label{K-equiv}
Let $X=X_\Sigma$ and $X'=X_{\Sigma'}$ be two simplicial toric
varieties with at most terminal singularities such that $X=_K X'$.
Then the birational map $f\colon X\dashrightarrow X'$ can be factorized
into toric flops.
\end{theorem}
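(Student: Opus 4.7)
The plan is to run a Reid-style toric MMP: I would realize $f$ as a composition of elementary bistellar wall-crossings and use the $K$-equivalence hypothesis to guarantee that each such move is a flop rather than a flip.

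\emph{Step 1: Matching the 1-skeleta.} First I would show $\Sigma(1)=\Sigma'(1)$. By the standard characterization of simplicial terminal toric singularities, a simplicial cone $\mathrm{Cone}(u_1,\ldots,u_n)$ defines a terminal singularity if and only if the only lattice points in $\mathrm{Conv}(0,u_1,\ldots,u_n)$ are the origin and the $u_i$. Hence the set of nonzero lattice points of $\mathrm{shed}\,\Sigma$ is exactly $\{u_\rho : \rho\in\Sigma(1)\}$. Since $X=_K X'$ translates combinatorially into $\mathrm{shed}\,\Sigma=\mathrm{shed}\,\Sigma'$, we conclude $\Sigma(1)=\Sigma'(1)$, so $\Sigma$ and $\Sigma'$ are two triangulations of the same polyhedron by the same vertex set, differing only in their maximal cones.

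\emph{Step 2: Each wall-crossing is a toric flop.} Assuming $\Sigma\neq\Sigma'$, choose a wall $\tau\in\Sigma(n-1)$ common to two maximal cones $\sigma_n,\sigma_{n+1}\in\Sigma(n)$, at least one of which is not in $\Sigma'(n)$. Let $\sum_{i=1}^{n+1} b_i u_i=0$ be its wall relation. Because the primitive generators $u_1,\ldots,u_{n+1}$ all lie on the common boundary hypersurface $\partial(\mathrm{shed}\,\Sigma)\setminus\{0\}$, which is the graph of a piecewise-linear height function, they are forced onto a single affine hyperplane of $N_{\mathbb{Q}}$, so $\sum_i b_i=0$. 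The intersection computation of Section \ref{ToricFlips} then yields $K_{X_\Sigma}\cdot\mathcal{R}=0$ for $\mathcal{R}=\mathbb{R}_{\ge 0}[V(\tau)]$, so the extremal contraction $\phi_{\mathcal{R}}$ is a flopping contraction and the induced birational map \eqref{tflipf_eqn} is a toric flop.

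\emph{Step 3: Progress and termination.} The main technical obstacle is to choose $\tau$ so that the resulting flop produces a fan $\Sigma^{(1)}$ strictly closer to $\Sigma'$, measured by the symmetric-difference distance $d(\Sigma,\Sigma'):=|\Sigma(n)\,\triangle\,\Sigma'(n)|$. Following Reid, I would argue that among the non-simplicial cones $\sigma_0\in\Sigma_0$ obtained by removing the $\mathcal{R}$-trivial walls of $\Sigma$, there exists at least one whose simplicial refinement in $\Sigma'$ differs from that in $\Sigma$ in precisely the way that the $J_-$- and $J_+$-decompositions do: performing the flop over such a $\sigma_0$ replaces $\Sigma$'s local decomposition by $\Sigma'$'s, strictly decreasing $d$. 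Since $d$ is a nonnegative integer bounded by $|\Sigma(n)|+|\Sigma'(n)|$, this procedure terminates in finitely many flops; at each stage the new fan inherits the 1-skeleton and the shed, hence remains simplicial and terminal, which closes the induction and gives the required factorization of $f$.
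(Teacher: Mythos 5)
Your Step 1 matches the paper's opening move exactly (terminality forces the nonzero lattice points of the shed to be precisely the primitive ray generators, so $\Sigma(1)=\Sigma'(1)$ and $X$, $X'$ are isomorphic in codimension one). After that, however, there are two genuine gaps. First, in Step 2 the coplanarity claim is not justified for an \emph{arbitrary} wall adjacent to a cone of $\Sigma(n)\setminus\Sigma'(n)$: the boundary of $\mathrm{shed}\,\Sigma$ is only piecewise linear, so $u_1,\dots,u_{n+1}$ lie on a single affine hyperplane only when $\tau$ is interior to a maximal linear piece, i.e.\ interior to a cone of the common coarsening $\bar\Sigma$; a wall of $\Sigma$ that is also a wall of $\bar\Sigma$ can separate two different linear pieces, and then $\sum_i b_i\neq 0$ in general. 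Moreover, a single bistellar wall-crossing is a toric flop in the sense of Section \ref{ToricFlips} only when $[V(\tau)]$ spans an extremal ray of $\cNE(X_\Sigma/\bar X)$ and $|J_-|\geq 2$; you do not verify either condition for your chosen $\tau$.

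Second, and more seriously, Step 3 asserts without proof exactly the hard combinatorial statement: that one can always find a wall whose flop strictly decreases $|\Sigma(n)\,\triangle\,\Sigma'(n)|$. Connectivity of triangulations under bistellar flips is false in general, and nothing in your argument rules out flops that move the fan ``sideways'' or away from $\Sigma'$. The paper circumvents this entirely by running a \emph{directed} MMP: it takes $H$ to be the proper transform on $X$ of an ample divisor $H'$ on $X'$ and runs the $(K_X+H)$-MMP over $\bar X$. Then $(K_X+H)$-negative extremal rays with $H\cdot C<0$ exist as long as $H$ is not $\phi$-nef, the corresponding contractions are automatically small (a divisorial or fiber-type ray could be represented by a curve outside the exceptional locus, where $H$ is ample, giving $H\cdot C\geq 0$), they are crepant because $K_X$ is $\phi$-trivial, and termination is supplied not by a symmetric-difference count but by the strictly decreasing volume of the relative shed $\mathrm{shed}_D\Sigma$, which lives in a discrete set. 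That mechanism is what guarantees both progress toward $X'$ and termination, and it is precisely what is missing from your proposal.
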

\begin{proof}
We know that $X=_K X'$ is equivalent to that the fans $\Sigma$ and
$\Sigma'$ give rise to different simplicial triangulations of the
same polyhedron fan $\bar\Sigma$. Also, the condition of having
terminal singularities shows that the possible lattice points in
$\hbox{shed}\,\Sigma$ and $\hbox{shed}\,\Sigma'$ are $0$ and
primitive generators of $\Sigma$ and $\Sigma'$. From the above
observation, we conclude that $\Sigma$ and $\Sigma'$ have the same
edges and thus $X$ and $X'$ are isomorphic in codimension one.

Let $\bar X = X_{\bar\Sigma}$ and $\phi, \phi'$ be the toric morphisms with respect to the simplicial triangulations of $\bar\Sigma$:
\[\begin{tikzcd}
X \arrow[rd, "\phi"'] && X' \arrow[ld, "\phantom{}\phi'"] \\
& \bar X &
\end{tikzcd}\]
Also, we see easily that both $\phi$ and $\phi'$ are
crepant morphisms.

We take an ample divisor $H'$ in $X'$ and consider $H$ as the
proper transform of $H'$ in $X$. We run the $(K_X + H)$-minimal
model program of the morphism $\phi$. Since $K_X$ is
$\phi$-trivial and $H$ is not $\phi$-nef, we see that there must
exist $(K_X + H)$-extremal rays $\cR = \mathbb{R}^+[C]$ with
$\phi(C) = {\rm pt}$ and $H.C < 0$. This implies that the
corresponding extremal contraction $\phi_\cR$ is small. For if it is
divisorial or fiber type, then we may represent $\cR$ by an
irreducible curve $C$ which does not lie in the $f$ exceptional
loci $Z \subseteq X$. But since $H$ is ample outside $Z$, we may
make $H$ and $C$ transversal hence $H.C \ge 0$, a contradiction.

Now we perform the toric flip $f_\cR\colon X\dashrightarrow X^+$ of $\phi_\cR$. Since $K_X.C = 0$, $f_\cR$ is indeed a toric flop
and hence $f^+ \colon X^+\to\bar X$ still inherits all the relevant
properties of $X$. In particular we still have $X^+ =_K X'$. By
the termination of toric flips we conclude the proof of the theorem.
\end{proof}

The termination of toric flips  was stated with
sketched proof in \cite{Matsuki}. For the reader's convenience, we
provide the details here. 

\begin{theorem}
Let $X=X_\Sigma$ be a complete simplicial toric variety and $D =\sum_{\rho \in \Sigma(1)} a_\rho D_\rho$ be a $\mathbb{Q}$-divisor with $ 0
\leq a_\rho < 1$. Then we have the termination of $(K_X + D)$-flips.
\end{theorem}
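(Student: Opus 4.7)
The plan is to produce a combinatorial invariant $V(\Sigma)$ that strictly increases at each $(K_X + D)$-flip and takes only finitely many values, whence termination.

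First I would observe that a toric flip is a small birational modification, so it preserves the one-skeleton of the fan: $\Sigma(1) = \Sigma'(1)$ for any toric flip $X_\Sigma \dashrightarrow X_{\Sigma'}$. Iterating, every fan appearing in a sequence of $(K_X + D)$-flips beginning with $X_\Sigma$ has the same one-skeleton $\Sigma(1)$. Since only finitely many complete simplicial fans have rays in a fixed finite subset of $N_\bR$, the set of fans that can appear in such a sequence is finite.

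Next I would introduce the $(K_X + D)$-shed. Let $\psi_{\Sigma'} \colon N_\bR \to \bR$ be the unique support function that is linear on each cone of $\Sigma'$ and satisfies $\psi_{\Sigma'}(u_\rho) = 1 - a_\rho$ for every ray $\rho \in \Sigma'(1) = \Sigma(1)$; the hypothesis $a_\rho < 1$ ensures these values are strictly positive. Fix the bounded polytope $K = \operatorname{Conv}(0, \{u_\rho : \rho \in \Sigma(1)\})$ and set
\[
    V(\Sigma') \coloneqq \int_K \psi_{\Sigma'}(v) \, dv.
\]
This invariant is well defined, positive, and takes only finitely many values as $\Sigma'$ ranges over the finite set of possible fans.

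The main step is to verify strict monotonicity $V(\Sigma'') > V(\Sigma')$ under each $(K_X + D)$-flip $X_{\Sigma'} \dashrightarrow X_{\Sigma''}$. Locally, the flip modifies the fan only inside a non-simplicial cone $\sigma_0$ with rays $u_1, \ldots, u_{n+1}$ and wall relation $\sum_i b_i u_i = 0$. Using the intersection formula $D_i \cdot V(\tau) = b_i / b_{n+1}$ recalled in Section \ref{ToricFlips}, the flipping condition $(K_{X_{\Sigma'}} + D) \cdot \cR < 0$ translates into the numerical inequality $\sum_i b_i (1 - a_i) > 0$. For any $v \in |\sigma_0|$, expanding $v$ in the two distinct simplicial subdivisions of $\sigma_0$ and substituting via the wall relation yields an identity of the shape
\[
    \psi_{\Sigma''}(v) - \psi_{\Sigma'}(v) = c(v) \cdot \sum_i b_i (1 - a_i),
\]
where $c(v) \geq 0$ vanishes only on a proper face of $|\sigma_0|$. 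Hence $\psi_{\Sigma''} \geq \psi_{\Sigma'}$ on $|\sigma_0|$ with strict inequality on a set of positive measure, and since the two functions agree outside $|\sigma_0|$, integration over $K$ gives $V(\Sigma'') > V(\Sigma')$.

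The principal obstacle is establishing the pointwise comparison $\psi_{\Sigma''} \geq \psi_{\Sigma'}$ on all of $|\sigma_0|$, not merely at the single central vector $u = -\sum_{i \in J_-} b_i u_i$. I would handle this by converting coordinates between the two simplicial decompositions of $\sigma_0$ via the wall relation, so that the nonnegative coefficient $c(v)$ is revealed as a barycentric-type coordinate vanishing precisely on the shared boundary. Once strict monotonicity is secured, combining it with the finiteness of the set of possible fans rules out any repetition of fans, forcing any $(K_X + D)$-flip sequence to terminate.
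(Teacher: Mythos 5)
Your proposal is correct, and its engine is the same as the paper's: both arguments reduce termination to the observation that the flipping condition $(K_X+D)\cdot\cR<0$ is, via the wall relation and the intersection formula $D_i\cdot V(\tau)=\tfrac{b_i}{b_{n+1}}D_{n+1}\cdot V(\tau)$, equivalent to $\sum_i b_i(1-a_{\rho_i})>0$, which forces the piecewise linear function $\phi_{K_X+D}$ to increase pointwise (strictly on an open set) on each modified cone $\sigma_0$; your coordinate-conversion computation of $c(v)$ is exactly the ``bridge versus gutter'' dichotomy the paper imports from Reid. The packaging differs in two ways. First, the paper's monotone quantity is the volume of the relative shed $\{u\mid \phi_{K_X+D}(u)\le 1\}$, i.e.\ a sublevel set of your $\psi$, which therefore strictly \emph{decreases}, whereas you integrate $\psi$ itself over a fixed polytope and get a strictly \emph{increasing} functional; these are monotonically equivalent records of the same local convexity change. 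Second, the finishing step differs: the paper shows the shed volumes lie in the discrete set $(\prod_i(1-a_{\rho_i}))^{-1}(n!)^{-1}\bN$ (this is where the hypothesis $a_\rho<1$ with $a_\rho$ rational is used, and it needs no a priori bound on which fans can occur), while you instead invoke the finiteness of the set of complete simplicial fans with the fixed ray set $\Sigma(1)$, which is preserved because flips are small. Both finishing moves are valid; the paper's discreteness argument is slightly more robust (it would survive even without control on the ray set), while yours is more elementary. Two cosmetic points: a single flip may modify the fan inside several non-simplicial cones $\sigma_0$ (one for each wall $\tau$ with $[V(\tau)]\in\cR$), so the pointwise inequality should be asserted cone by cone and then summed; and your $c(v)$ vanishes on the full locus where the two triangulations of $\sigma_0$ agree, not merely on a proper face, but since it is positive near the vector $u=-\sum_{i\in J_-}b_iu_i$ the integral still strictly increases.
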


\begin{proof}
Recall that $\hbox{shed}\,\Sigma = \{u \in \Sigma\,|\,
\phi_{K_X}(u) \leq 1\}$ where $\phi_{K_X}$ is the corresponding
piecewise linear function of $K_X$. Now we bring up a more general
notion of relative shed of $\Sigma$ with respect to $D$ which is
defined by the set $\{u \in \Sigma\,|\, \phi_{K_X + D}(u) \leq
1\}$ and is denoted by $\hbox{shed}_D\,\Sigma$. Our strategy is to
give a similar geometric criterion about $\hbox{shed}_D\,\Sigma$
for the condition of $(K_X + D).V(\tau) < 0$ with $\tau$ a wall in $\Sigma$.

Indeed, when we restrict ourselves on an affine cone $\sigma =
\Cone (u_1,\ldots,u_n)$ in $\Sigma$ where $u_i$ is the primitive vector of the 1-face $\rho_i$,
$\hbox{shed}_D\,\Sigma|_{\sigma}$ is equal to the convex hull
$$\operatorname{Conv}\left(0, \frac{u_1}{1-a_{\rho_1}},\ldots,
\frac{u_n}{1-a_{\rho_n}}\right).
$$

By the same argument in \cite{Reid83}, we get that if $(K_X + D).V(\tau)
< 0$ then $\hbox{shed}_D\,\Sigma$ has a bridge along $\tau$ and if
$(K_X + D).V(\tau) > 0$ then $\hbox{shed}_D\,\Sigma$ has a gutter
along $\tau$. Hence if $X_{\Sigma'}$ is the $(K_X + D)$-flipped
variety, then
$$
\hbox{Vol}(\hbox{shed}_D\,\Sigma) >
\hbox{Vol}(\hbox{shed}_D\,\Sigma').
$$

However the values of both volumes are in the discrete set
$$
(\prod\nolimits_{i=1}^n (1 - a_{\rho_i}))^{-1}(n!)^{-1}\mathbb{N}.
$$
This implies that it is impossible to have an infinite sequence of
$(K_X + D)$-flips.
\end{proof}

\bibliographystyle{alpha}

\end{document}